\newcommand{\leqnomode}{\tagsleft@true}
\newcommand{\reqnomode}{\tagsleft@false}
\theoremstyle{plain}
\newtheorem{theorem}{Theorem}[section]
\newtheorem{lemma}[theorem]{Lemma}
\newtheorem{corollary}[theorem]{Corollary}
\newtheorem{proposition}[theorem]{Proposition}
\theoremstyle{definition}
\newtheorem{defn}{Definition}
\theoremstyle{remark}
\newtheorem{remark}[theorem]{Remark}
\newcommand{\sgn}{\operatorname{sgn}}
\providecommand{\customgenericname}{}
\newcommand{\newcustomtheorem}[2]{%
  \newenvironment{#1}[1]
  {%
   \renewcommand\customgenericname{#2}%
   \renewcommand\theinnercustomgeneric{##1}%
   \innercustomgeneric
  }
  {\endinnercustomgeneric}
}
\theoremstyle{plain}
\newlength{\problemoffset}
\newcommand{\decision}[3]{
\begin{list}{}{
\setlength{\leftmargin}{0.6cm}
\setlength{\rightmargin}{1cm}
\setlength{\parsep}{0pt}
\setlength{\itemsep}{2pt}
\setlength{\topsep}{4pt}
\setlength{\partopsep}{\itemsep}
}
\item
{\textsc{#1}}
\item
{{\textbf{INSTANCE:}} #2}
\item
{{\textbf{QUESTION:}} #3}
\end{list}
}
\newcommand{\R}{\mathbb{R}}
\renewcommand{\tilde}[1]{\widetilde{#1}}
\def\longbox#1{\parbox{0.85\textwidth}{#1}}
\newcommand*\samethanks[1][\value{footnote}]{\footnotemark[#1]}
\newcolumntype{M}[1]{>{\centering\arraybackslash}p{#1}}
\newcolumntype{V}[1]{>{\centering\arraybackslash}m{#1}}
\DeclareMathOperator*{\argmax}{arg\,max}
\title{Sums of Separable and Quadratic Polynomials}
\author{Amir Ali Ahmadi\thanks{A. A. Ahmadi and C. Dibek are with the department of Operations Research and Financial Engineering,
		Princeton University, USA.
		Emails: {\tt\small aaa@princeton.edu}; {\tt\small cdibek@princeton.edu}} \ \ \ \ \ \
Cemil Dibek\samethanks \ \ \ \ \ \
Georgina Hall\thanks{G. Hall is with the department of Decision Sciences,
		INSEAD, France. Email: {\tt \small georgina.hall@insead.edu} \newline This work was partially supported by an AFOSR MURI award, the DARPA
Young Faculty Award, the Princeton SEAS Innovation Award, the NSF CAREER Award, the Google Faculty Award,
and the Sloan Fellowship.
}
}
\date{}
\begin{document}\maketitle

\begin{abstract}
We study \emph{separable plus quadratic} (SPQ) polynomials, i.e., polynomials that are the sum of univariate polynomials in different variables and a quadratic polynomial. Motivated by the fact that nonnegative separable and nonnegative quadratic polynomials are sums of squares, we study whether nonnegative SPQ polynomials are (i) the sum of a nonnegative separable and a nonnegative quadratic polynomial, and (ii) a sum of squares. We establish that the answer to question~(i) is positive for univariate plus quadratic polynomials and for convex SPQ polynomials, but negative already for bivariate quartic SPQ polynomials. We use our decomposition result for convex SPQ polynomials to show that convex SPQ polynomial optimization problems can be solved by ``small" semidefinite programs. For question~(ii), we provide a complete characterization of the answer based on the degree and the number of variables of the SPQ polynomial. We also prove that testing nonnegativity of SPQ polynomials is NP-hard when the degree is at least four. We end by presenting applications of SPQ polynomials to upper bounding sparsity of solutions to linear programs, polynomial regression problems in statistics, and a generalization of Newton's method which incorporates separable higher-order derivative information.\\

\noindent \textit{Keywords: Nonnegative and sum of squares polynomials,  semidefinite programming, polynomial optimization.}
\end{abstract}

\section{Introduction}
A polynomial $p: \R^n \rightarrow \R$ with real coefficients is said to be \emph{nonnegative} if $p(x) \geq 0$ for all $x \in \R^n$ and a \emph{sum of squares} (sos) if there exist polynomials $q_1(x),\ldots,q_m(x)$ such that $p(x) = \sum_{i=1}^m q_i^2(x)$. It is clear that the set $\Sigma_{n,d}$ of sos polynomials of degree $d$ in $n$ variables is contained in the set $N_{n,d}$ of nonnegative polynomials of degree $d$ in $n$ variables. The question of equivalence between $N_{n,d}$ and $\Sigma_{n,d}$ is a classical problem of algebraic geometry which was resolved by Hilbert in 1888:

\begin{theorem}[\cite{Hilbert}]
$\Sigma_{n,d} = N_{n,d}$ if and only if $n=1$, or $d=2$, or $(n, d) = (2, 4)$.
\label{thm:Hilbert}
\end{theorem}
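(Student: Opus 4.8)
The plan is to prove the two implications separately, after one normalization. A polynomial of odd degree cannot be nonnegative on all of $\R^n$ (its leading homogeneous part is a nonzero odd form and hence takes both signs along some line), so we may assume $d=2k$ is even; and by homogenizing and using continuity one checks that $N_{n,2k}=\Sigma_{n,2k}$ is equivalent to the statement that every nonnegative \emph{form} of degree $2k$ in $n+1$ variables is a sum of squares of forms. Under the relabelling $m=n+1$ the three listed cases $n=1$, $d=2$, $(n,d)=(2,4)$ become, respectively, binary forms, quadratic forms, and ternary quartics; I will keep the discussion in whichever of the two languages is more convenient.

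For the ``if'' direction there are two easy cases and one hard one. If $n=1$, factor a nonnegative $p\in\R[x]$ over $\R$: each real root must have even multiplicity (otherwise $p$ changes sign), so $p$ equals a perfect square times a product of positive-definite quadratics $(ax+b)^2+c^2$; since a product of sums of two squares is again a sum of two squares (the Brahmagupta--Fibonacci identity, i.e.\ multiplicativity of the squared modulus on $\mathbb C$), $p$ is in fact a sum of two squares. If $d=2$, write $p(x)=z^\top M z$ with $z=(1,x_1,\dots,x_n)^\top$ and $M$ symmetric; completing the square (or testing $p$ along lines) shows that $p\geq 0$ on $\R^n$ forces $M\succeq 0$, and then a factorization $M=\sum_i v_i v_i^\top$ displays $p=\sum_i (v_i^\top z)^2$ as a sum of squares of affine forms.

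The hard case of the ``if'' direction is $(n,d)=(2,4)$ --- equivalently, that every nonnegative ternary quartic form is a sum of squares of quadratic forms --- and I expect this to be the main obstacle. Here I would follow Hilbert's classical argument through the geometry of plane quartic curves (or one of its later streamlinings). The essential point is that nonnegativity severely restricts the real zero locus of the quartic: it is a finite set of isolated points in $\mathbb P^2(\R)$, and there are at most four of them unless the quartic is the square of a quadratic form. One then peels off a square $q^2$ of a quadratic form adapted to those zeros and inducts on a complexity measure, eventually reducing to the case of a strictly positive quartic, which is then shown to be sos directly. Controlling these real zeros, together with the linear algebra of the conics through them, is the delicate part, and it is precisely where naive attempts --- and the analogous statement in more variables --- break down.

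For the ``only if'' direction it suffices to exhibit, for every pair $(n,d)$ outside the list (with $d$ even), a nonnegative polynomial that is not sos, and I would do this from two base examples together with two lifting operations. The Motzkin polynomial $M(x,y)=x^4y^2+x^2y^4-3x^2y^2+1$, at $(n,d)=(2,6)$, is nonnegative by the AM--GM inequality applied to $x^4y^2$, $x^2y^4$, $1$, and is not sos: in a hypothetical $M=\sum_i p_i^2$ with $\deg p_i\leq 3$, matching the coefficients of $x^6,y^6,x^4,y^4,x^2,y^2,x,y$ forces every $p_i$ into $\mathrm{span}\{x^2y,\,xy^2,\,xy,\,1\}$, after which the coefficient of $x^2y^2$ on the right equals $\sum_i(\text{coeff.\ of }xy\text{ in }p_i)^2\geq 0$, contradicting the $-3$ on the left. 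The dehomogenized Choi--Lam quartic $\tilde C(x,y,z)=x^2y^2+y^2z^2+z^2x^2-4xyz+1$, at $(n,d)=(3,4)$, is nonnegative by AM--GM and not sos because the same kind of coefficient matching forces $p_i\in\mathrm{span}\{xy,\,yz,\,zx,\,1\}$, from which no $xyz$ term can ever arise in $\sum_i p_i^2$. The two lifting operations are: (i) if $p(x_1,\dots,x_n)$ is nonnegative but not sos of degree $d$, then the same $p$ viewed in $n+1$ variables is nonnegative but not sos of degree $d$, since setting $x_{n+1}=0$ in a putative representation $\sum_i q_i^2$ recovers one for $p$; and (ii) if $p(x_1,\dots,x_n)$ is nonnegative but not sos of degree $d$, then $x_1^2\,p$ is nonnegative but not sos of degree $d+2$, since in $x_1^2 p=\sum_i q_i^2$ setting $x_1=0$ forces $x_1\mid q_i$ for each $i$, and cancelling $x_1^2$ then recovers an sos representation of $p$. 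Iterating (ii) on $M$ gives counterexamples at $(2,d)$ for every even $d\geq 6$; iterating (i) on those gives all $(n,d)$ with $n\geq 2$ and $d\geq 6$ even; and iterating (i) on $\tilde C$ gives all $(n,4)$ with $n\geq 3$. These are exactly the pairs with $d$ even that lie outside $\{\,n=1\,\}\cup\{\,d=2\,\}\cup\{\,(n,d)=(2,4)\,\}$, so the proof is complete.
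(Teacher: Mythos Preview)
The paper does not prove this theorem; it is stated with citation to Hilbert (1888) as classical background and is used throughout without proof. So there is no argument in the paper to compare your proposal against.

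Your sketch is essentially the standard modern account. The two easy cases of the ``if'' direction ($n=1$ via real factorization, $d=2$ via the spectral decomposition of a PSD matrix) and the two lifting operations for the ``only if'' direction are correct and complete. The ternary quartic case $(n,d)=(2,4)$ is, as you acknowledge, only outlined; a rigorous proof along Hilbert's lines (or via one of the later approaches of Choi--Lam, Powers--Reznick--Scheiderer--Sottile, or Pfister--Scheiderer) is genuinely nontrivial, so leaving it as a citation is appropriate here. One small remark: the paper itself notes that Hilbert's original argument for the ``only if'' direction was nonconstructive, and that explicit examples such as Motzkin's and Robinson's (the paper's choice for $(3,4)$, in place of your Choi--Lam example) only appeared some 80 years later---so your explicit-example approach is already more than what the cited source contains.
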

\reqnomode

As both nonnegative univariate ($n=1$) polynomials and nonnegative quadratic ($d=2$) polynomials are sums of squares, it is natural to wonder what would happen to the sum of a univariate and a quadratic polynomial. Would it also be the case that any nonnegative polynomial with such a structure admits a sum of squares representation? We aim to answer this and related questions in a more general setting where the univariate polynomial is replaced by a separable polynomial. This structure is captured in the following definition.
\begin{defn}
\label{defn:spq}
A polynomial $p: \R^n \rightarrow \R$ is \emph{separable plus quadratic (SPQ)} if $p(x) = s(x) + q(x)$, where $s(x)$ is a separable polynomial, i.e., $s(x)=\sum_{i=1}^n s_i(x_i)$ for some univariate polynomials $s_i:\R \rightarrow \R$, and $q(x)$ is a quadratic polynomial.
\end{defn}

Hilbert's proof of Theorem \ref{thm:Hilbert} exhibited no explicit examples of polynomials which are nonnegative but not sos. In fact, it took a further 80 years for such examples to emerge. Of particular interest are examples corresponding to the cases where $(n,d)=(2,6)$ and $(n,d)=(3,4)$ as they constitute the minimal cases for which $\Sigma_{n,d} \neq N_{n,d}$. These examples were produced by Motzkin~\cite{Motzkin} and Robinson~\cite{Robinson} respectively and are given below:
\begin{align}
&M(x_1, x_2) = x_1^4x_2^2 + x_1^2x_2^4 - 3x_1^2x_2^2 + 1, \label{ex:Motzkin}\\
&R(x_1, x_2, x_3) =  x_1^2(x_1 -1)^2 + x_2^2(x_2 -1)^2 + x_3^2(x_3 -1)^2 + 2x_1x_2x_3(x_1+x_2+x_3-2). \label{ex:Robinson}
\end{align}
Many other examples have appeared in the literature over the years; see, e.g., \cite{Reznick, ChoiLam1, ChoiLam2, ChoiLamRez80}. An interesting feature of existing examples such as (\ref{ex:Motzkin}) and (\ref{ex:Robinson}) is the presence of cross-terms of high degree. It is thus not immediately clear that examples of nonnegative but not sos polynomials are possible among SPQ polynomials, as their cross-terms have degree equal to $2$.

While questions about the relationship between sos and nonnegative polynomials had previously been the preserve of the mathematics community, the beginning of the 21st century saw a renewed interest in these questions originating from the optimization community. This was mainly due to two factors: first, the observation that many important problems in semialgebraic optimization can be reformulated as optimization problems over nonnegative polynomials; second, the discovery of a fundamental link between semidefinite programming and sos polynomials, as given below.
\begin{theorem}[\cite{ChoiLamRez, ParriloThesis}]
A polynomial $p(x)$ in $n$ variables and of degree $2d$ is sos if and only if there exists a (symmetric) positive semidefinite matrix $Q$ such that $p(x) = z(x)^TQz(x)$ where $z(x)$ is the vector of monomials of degree up to $d$, i.e., $z(x) = (1, x_1, \dots, x_n, x_1x_2, \dots, x_n^d)^T$.
\label{thm:sos_psd}
\end{theorem}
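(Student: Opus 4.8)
The plan is to prove the two implications separately, in each case just translating between a polynomial identity and a statement about the coefficient vectors of the polynomials involved. Throughout, let $z(x) = (1, x_1, \dots, x_n, x_1x_2, \dots, x_n^d)^T$ be the vector of all monomials of degree at most $d$, and record the basic fact that for a (column) vector $c$ of matching length, $c^T z(x)$ is a polynomial of degree at most $d$, and conversely every polynomial of degree at most $d$ equals $c^T z(x)$ for a unique such $c$.

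For the ``if'' direction, suppose $p(x) = z(x)^T Q z(x)$ with $Q$ symmetric positive semidefinite. The step here is to diagonalize $Q$: by the spectral theorem write $Q = \sum_i \lambda_i v_i v_i^T$ with $\lambda_i \geq 0$ (a Cholesky or $LDL^T$ factorization works equally well). Substituting gives $p(x) = \sum_i \lambda_i \big(v_i^T z(x)\big)^2 = \sum_i \big(\sqrt{\lambda_i}\, v_i^T z(x)\big)^2$, exhibiting $p$ as a sum of squares of polynomials each of degree at most $d$.

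For the ``only if'' direction, suppose $p = \sum_{i=1}^m q_i^2$. The key first step is a \emph{degree bound}: each $q_i$ has degree at most $d$. Suppose not, and let $e > d$ be the maximum of the degrees of the $q_i$; for each $i$ with $\deg q_i = e$, let $\hat q_i$ be its (nonzero) top-degree homogeneous part. Extracting the degree-$2e$ homogeneous component of both sides of $p = \sum_i q_i^2$ yields $0 = \sum_{i:\ \deg q_i = e} \hat q_i^2$, using that $\deg p = 2d < 2e$. Since each $\hat q_i^2$ is pointwise nonnegative, this forces $\hat q_i \equiv 0$ for all such $i$ (a real polynomial vanishing everywhere is the zero polynomial), a contradiction. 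With the degree bound in hand, write $q_i(x) = c_i^T z(x)$, so that $p(x) = \sum_i \big(c_i^T z(x)\big)^2 = z(x)^T \big(\sum_i c_i c_i^T\big) z(x)$; thus $Q := \sum_i c_i c_i^T$ does the job, and it is positive semidefinite as a sum of rank-one positive semidefinite matrices.

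I expect the only genuinely delicate point to be the degree-bound argument in the ``only if'' direction, specifically the claim that the top-degree forms of the $q_i$ cannot cancel — which is where nonnegativity of squares is used essentially; everything else is a direct dictionary between a Gram matrix and a sum-of-squares decomposition. I would also add a remark that the Gram matrix $Q$ is in general far from unique: equating coefficients of $p(x)$ and $z(x)^T Q z(x)$ imposes one affine constraint on the entries of $Q$ per monomial of degree at most $2d$ (several distinct entries of $z(x)z(x)^T$ can carry the same monomial), so the admissible $Q$'s form an affine subspace, and Theorem~\ref{thm:sos_psd} is precisely the assertion that $p$ is sos if and only if this affine subspace intersects the cone of positive semidefinite matrices — a semidefinite feasibility problem.
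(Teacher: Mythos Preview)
The paper does not give its own proof of this statement: Theorem~\ref{thm:sos_psd} is quoted from the literature (with citations to \cite{ChoiLamRez, ParriloThesis}) and used as a black box throughout. So there is no proof in the paper to compare against.

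That said, your argument is correct and is essentially the standard one found in the cited references. The ``if'' direction via a spectral (or Cholesky) factorization of $Q$ is immediate, and your ``only if'' direction correctly isolates the one nontrivial point, namely the degree bound $\deg q_i \le d$, and handles it cleanly by extracting the top homogeneous component and using that a sum of nonnegative polynomials vanishing identically forces each summand to vanish. Your closing remark on nonuniqueness of the Gram matrix and the resulting semidefinite feasibility formulation is also accurate and matches how the paper uses the theorem in practice.
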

This theorem immediately leads to a semidefinite programming-based method for checking whether a polynomial is a sum of squares, and in fact, more interestingly, for optimizing a linear function over the intersection of the set of sos polynomials with an affine subspace. In sharp contrast, optimization over the set of nonnegative polynomials is intractable. Indeed, simply checking whether a polynomial of degree $4$ is nonnegative is NP-hard \cite{MurtKaba}.

The fact that optimization over the set of sos polynomials can be done using semidefinite programming has enabled wide-ranging applications. As alluded to before, numerous semialgebraic problems in applied and computational mathematics can be cast as optimization problems over the set of nonnegative polynomials; see, e.g., \cite{LasserreBook, BPT, ghsurvey}. While these problems are generally intractable to solve exactly, it is nevertheless possible to use sos polynomials as surrogates for nonnegative polynomials and, in view of Theorem~\ref{thm:sos_psd}, solve an approximation of the problem using semidefinite programming. It is thus increasingly relevant to study the relationship between nonnegative and sos polynomials under additional structure. This is what this paper proposes to do for polynomials with an SPQ structure. As mentioned previously, this is a very natural structure to consider in light of the first two equality cases in Theorem~\ref{thm:Hilbert}; it is also a structure of interest in various applications, as we see later. We further extend our study in this paper to understanding nonnegativity of \emph{convex} SPQ polynomials. This has implications for polynomial optimization problems involving such polynomials.

\subsection{Organization and main contributions}\label{subsec:organization}

The organization of the remainder of this paper is as follows. In Section \ref{sec:special_cases}, we study when nonnegative SPQ polynomials can be written as the sum of nonnegative univariate polynomials and a nonnegative quadratic polynomial. In Section \ref{subsec:separable}, we show that this is the case for nonnegative separable polynomials and nonnegative polynomials that are the sum of a univariate and a quadratic polynomial. In Section \ref{subsec:nonneg_sep_plus_nonneg_quad}, we show that this is not the case in general, and provide a minimal example where this decomposition fails to exist.

In Section \ref{sec:spq_polynomials}, we prove the analogue of Theorem \ref{thm:Hilbert} for SPQ polynomials. This involves constructing minimal examples of SPQ polynomials that are nonnegative but not sos (Section~\ref{subsec:minimal_cases}) and then generalizing these examples to higher degrees (Section~\ref{subsec:higher_n_d}). While the results of Section~\ref{sec:spq_polynomials} imply that testing nonnegativity of SPQ polynomials cannot always be accomplished via a sum of squares decomposition, they do not exclude the possibility of a polynomial-time algorithm for the task. In Section \ref{sec:complexity}, we give a proof of NP-hardness of deciding nonnegativity of degree-4 SPQ polynomials. This precludes a polynomial-time algorithm from existing, unless P$=$NP.

Section \ref{sec:convexity} focuses on convex SPQ polynomials. In Section \ref{subsec:check_convexity}, we show that the Hessian of a convex SPQ polynomial can be written as the sum of positive semidefinite univariate polynomial matrices. In Section \ref{subsec:cone_nonneg_sum}, we prove that any nonnegative convex SPQ polynomial can be written as the sum of a nonnegative separable and a nonnegative quadratic polynomial. In Section~\ref{subsec:spq_optimization}, we build on this result to show that polynomial optimization problems whose objective and constraint functions are given by convex SPQ polynomials can be solved via a single semidefinite program whose size is much smaller than that obtained via the first level of the Lasserre hierarchy. A procedure for extracting an optimal solution is also presented.

We conclude our paper in Section \ref{sec:applications} with three potential applications involving SPQ polynomials. In Section~\ref{subsec:lower_bounds_sparsity}, we use separable polynomials as a surrogate for the $\ell_0$-pseudonorm to obtain upper bounds on the sparsity of solutions of linear programs. These bounds improve on those given by the $\ell_1$-norm. As opposed to the $\ell_1$-based approach, our approach takes into consideration the problem data, and can produce problem-specific surrogates for the $\ell_0$-pseudonorm. In Section \ref{subsec:spq_regression}, we consider shape-constrained polynomial regression where we fit a convex SPQ polynomial to noisy evaluations of a convex function with low-degree interactions between variables. In Section \ref{subsec:newton_spq}, we propose a variant of Newton's method for minimizing a multivariate function that relies on local approximations by SPQ polynomials, instead of local quadratic approximations. We perform numerical experiments for all three applications highlighting the potential benefits of these approaches.

\section{Nonnegativity of Special Cases of SPQ Polynomials}\label{sec:special_cases}

We begin this section by considering two subsets of the set of SPQ polynomials: separable polynomials and univariate plus quadratic polynomials. We show that these polynomials are nonnegative if and only if they are sos. Our proof technique is similar for both results: it involves recasting nonnegative polynomials with these structures as sums of nonnegative univariate polynomials and a nonnegative quadratic polynomial. This approach motivates the question as to whether such a decomposition is always possible for nonnegative SPQ polynomials. We give a negative answer to this question in Section~\ref{subsec:nonneg_sep_plus_nonneg_quad}. Interestingly, such a decomposition can fail even when the existence of a sum of squares decomposition is guaranteed.

\subsection{Separable polynomials and univariate plus quadratic polynomials}\label{subsec:separable}

We start by examining nonnegativity of separable polynomials.

\begin{lemma}
Every nonnegative separable polynomial can be written as the sum of nonnegative univariate polynomials. In particular, a separable polynomial is nonnegative if and only if it is sos.
\label{lem:separable_nonneg}
\end{lemma}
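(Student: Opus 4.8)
The plan is to reduce the multivariate claim to a one-variable fact and then invoke Hilbert's theorem for the univariate case. Write $s(x) = \sum_{i=1}^n s_i(x_i)$. The key observation is that a separable polynomial splits additively across disjoint variable sets, so nonnegativity of $s$ on all of $\R^n$ is governed by the behavior of each $s_i$ in isolation. Concretely, for each index $i$, let $m_i = \inf_{t \in \R} s_i(t)$. If some $m_i = -\infty$, then by sending $x_i \to \pm\infty$ and holding the other coordinates fixed we would drive $s(x) \to -\infty$, contradicting $s \geq 0$; so each $m_i$ is finite and is attained (a univariate polynomial bounded below attains its infimum, as its leading term must have even degree with positive coefficient, or it is constant). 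Evaluating at the minimizing point in each coordinate shows $\sum_i m_i \geq 0$.

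The decomposition then comes from redistributing these constants. Define $\tilde s_i(x_i) = s_i(x_i) - m_i$ for $i = 1,\dots,n$. Each $\tilde s_i$ is a nonnegative univariate polynomial by construction, and $\sum_i \tilde s_i(x_i) = s(x) - \sum_i m_i$, so
\[
s(x) = \sum_{i=1}^n \tilde s_i(x_i) + \Big(\sum_{i=1}^n m_i\Big),
\]
where the trailing constant is nonnegative. Absorbing the constant into, say, $\tilde s_1$ (replacing $\tilde s_1$ by $\tilde s_1 + \sum_i m_i$, which is still a nonnegative univariate polynomial) exhibits $s$ as a sum of $n$ nonnegative univariate polynomials, proving the first sentence.

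For the ``if and only if'' statement: the ``if'' direction is immediate since any sos polynomial is nonnegative. For ``only if'', apply the $n=1$ case of Hilbert's theorem (Theorem~\ref{thm:Hilbert}) to each nonnegative univariate polynomial in the decomposition above to write it as a sum of squares (of univariate polynomials in $x_i$), and then sum these representations over $i$ to obtain an sos representation of $s$.

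I expect no serious obstacle here; the only point requiring a little care is the justification that a univariate polynomial which is bounded below on $\R$ actually attains its minimum (so that the constants $m_i$ are the right things to subtract), but this is elementary — it follows from the fact that such a polynomial has even degree with positive leading coefficient, hence is coercive. The conceptual content is entirely in the trivial observation that separability lets each variable be optimized independently.
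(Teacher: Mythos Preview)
Your proof is correct and follows essentially the same approach as the paper: subtract from each univariate piece its minimum value to make it nonnegative, observe that the leftover constant $\sum_i m_i$ is nonnegative by evaluating $s$ at the vector of minimizers, and then invoke the $n=1$ case of Hilbert's theorem. The paper's version is slightly terser (it leaves the nonnegative constant as a separate summand rather than absorbing it into $\tilde s_1$), but the argument is the same.
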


\begin{proof}
Let $p(x) = \sum_{i=1}^n p_i(x_i)$ be a nonnegative separable polynomial. Let $x_i^* \in \R$ be a global minimum\footnote{As $p(x)$ is nonnegative, each function $p_i(x_i)$ is lower bounded, and being a univariate polynomial, its infimum is attained.} of $p_i(x_i)$ and let $x^* \in \R^n$ be the vector $x^* = (x_1^*, x_2^*, \dots, x_n^*)^T$. We have
\begin{equation}
\hspace{5cm}
p(x) = \sum\limits_{i=1}^{n} \Big (p_i(x_i) - p_i(x_i^*) \Big ) + c,
\label{eq:sep_nonnega}
\end{equation}
where the constant $c$ is nonnegative since $c = p(x^*)$ and as $p(x^*) \geq 0$ by the assumption of nonnegativity of $p(x)$. Equation~\eqref{eq:sep_nonnega} thus proves the first statement. The second statement follows as an immediate corollary of Theorem~\ref{thm:Hilbert}.
\end{proof}

We now prove a similar result for univariate plus quadratic polynomials, although the proof is slightly more involved.

\leqnomode
\begin{theorem}
Let $p(x)$ be a polynomial that can be written as the sum of a univariate and a quadratic polynomial. If $p(x)$ is nonnegative, then it can be written as the sum of a nonnegative univariate and a nonnegative quadratic polynomial. In particular, $p(x)$ is nonnegative if and only if it is sos.
\label{thm:nonneg_uni_plus_quadratic}
\end{theorem}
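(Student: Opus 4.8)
The plan is to prove the decomposition statement directly and then read off the ``in particular'' clause from Theorem~\ref{thm:Hilbert}. Without loss of generality the univariate part is a polynomial in $x_1$, so I would write $p(x) = u(x_1) + q(x)$ with $q$ quadratic and split off $y := (x_2,\dots,x_n)$ by collecting terms according to their dependence on $y$:
\[
p(x) \;=\; w(x_1) \;+\; y^T M y \;+\; (x_1 b + v)^T y ,
\]
where $M$ is a symmetric $(n-1)\times(n-1)$ matrix, $b,v\in\R^{n-1}$, and $w$ is the univariate polynomial obtained by adding to $u(x_1)$ all the terms of $q$ that involve $x_1$ only; note that every monomial of $p$ of degree at least $3$ lies inside $w$. (If $n=1$ there is no $y$ and the statement is trivial, so I assume $n\ge 2$.)

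Next I would consider the partial minimization $\phi(x_1) := \inf_{y\in\R^{n-1}} p(x_1,y)$ and argue that it is itself a univariate polynomial. Since $p\ge 0$, for each fixed $x_1$ the quadratic $y\mapsto p(x_1,y)$ is bounded below, which forces $M\succeq 0$ and also forces $x_1 b + v \in \operatorname{range} M$ for every value of $x_1$ (otherwise moving $y$ along the component of $x_1 b + v$ lying in $\ker M$ would drive $p$ to $-\infty$; taking $x_1=0$ gives $v\in\operatorname{range}M$, and scaling then gives $b\in\operatorname{range}M$). The elementary formula for the minimum of a positive semidefinite quadratic then yields
\[
\phi(x_1) \;=\; w(x_1) \;-\; \tfrac14\,(x_1 b + v)^T M^{+}\,(x_1 b + v),
\]
with $M^{+}$ the Moore--Penrose pseudoinverse of $M$; since $M^{+}$ is a fixed matrix, the right-hand side is genuinely a polynomial in $x_1$.

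With $\phi$ in hand, the decomposition is $p(x) = \phi(x_1) + \bigl(p(x)-\phi(x_1)\bigr)$. The first summand is a nonnegative univariate polynomial because $p\ge 0$ implies $\phi(x_1)\ge 0$ for all $x_1$. For the second summand, substituting the formula for $\phi$ and completing the square gives
\[
p(x)-\phi(x_1) \;=\; \Bigl(y + \tfrac12 M^{+}(x_1 b + v)\Bigr)^{T} M \Bigl(y + \tfrac12 M^{+}(x_1 b + v)\Bigr),
\]
which has degree at most $2$ in $x$ and is pointwise nonnegative since $M\succeq 0$; hence it is a nonnegative quadratic polynomial. This proves the first assertion, and the ``in particular'' clause follows immediately: by Theorem~\ref{thm:Hilbert} the univariate polynomial $\phi$ and the quadratic polynomial $p-\phi$ are each sums of squares, so $p$ is too, while the reverse implication is trivial.

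The one step requiring care is the claim that $\phi$ is an honest polynomial rather than merely a continuous (or piecewise-smooth) function of $x_1$: this rests on the range condition $x_1 b + v\in\operatorname{range} M$ holding \emph{simultaneously} for all $x_1$, which is exactly the point at which global nonnegativity of $p$ is used. Once that is in place, the pseudoinverse formula does the rest and the remaining verifications are routine.
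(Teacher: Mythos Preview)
Your proof is correct and is essentially the same as the paper's: the paper writes $p(x)=\tilde{x}^TA(x_1)\tilde{x}$ and applies the generalized Schur complement to split off the univariate piece $u(x_1)-t(x_1)$ with $t(x_1)=(a+bx_1)^TC^{\dagger}(a+bx_1)$, which is exactly your partial minimization $\phi(x_1)=w(x_1)-\tfrac14(x_1b+v)^TM^{+}(x_1b+v)$ (up to notation), and the paper's nonnegative quadratic remainder $\bar{x}^TA\bar{x}+t(x_1)$ is precisely your completed square $p(x)-\phi(x_1)$. The only presentational difference is that you phrase the key step as ``minimize the PSD quadratic in $y$'' while the paper phrases it as ``take the Schur complement of $A(x_1)$ with respect to $C$'', and the range condition you derive ($b,v\in\operatorname{range}M$) is exactly the paper's $(I-CC^{\dagger})a=0$, $(I-CC^{\dagger})b=0$.
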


\begin{proof}
Let $p(x)$ be a nonnegative polynomial in $n$ variables and of degree $d$ that can be written as the sum of a univariate and a quadratic polynomial. We may assume that the variable whose degree in $p(x)$ is higher than 2 is $x_1$. (If there is no such variable, the claim is trivial.) By pushing, if necessary, the constant, $x_1$, and $x_1^2$ terms from the quadratic part into the univariate part, we may also assume that $p(x)$ can be written as $p(x) = \bar{x}^T A \bar{x} + u(x_1)$, where $\bar{x} = (1, x_1, x_2, \dots x_n)^T$, $u(x_1)$ is a univariate polynomial of degree $d$, and the matrix $A$ is of the following structure:
$$
A = 
  \begin{pmatrix}
      0 & 0 & a_2 & a_3 & \dots & a_n \\
      0 & 0 & b_2 & b_3 & \dots & b_n \\
      a_2 & b_2 & c_2 & e_{23} & \dots & e_{2n} \\
      a_3 & b_3 & e_{23} & c_3 &  &  \vdots \\
      \vdots & \vdots & \vdots &  & \ddots &  \\
      a_n & b_n & e_{2n} & \dots &  & c_n \\
  \end{pmatrix}.
$$
Let $C$ denote the $(n-1) \times (n-1)$ matrix obtained from $A$ by deleting the first and second rows and columns. We observe that $p(x)$ can also be written as $p(x) = \tilde{x}^T A(x_1) \tilde{x}$, where $\tilde{x} = (1, x_2, \dots, x_n)^T$ and
$$
A(x_1) = 
  \begin{pmatrix}
      u(x_1) & a_2 + b_2x_1 & a_3 + b_3x_1 & \dots & a_n + b_nx_1 \\
      a_2 + b_2x_1 & c_2 & e_{23} & \dots & e_{2n} \\
      a_3 + b_3x_1 & e_{23} & c_3 &  &  \vdots \\
      \vdots & \vdots & \vdots & \ddots &  &  \\
      a_n +b_nx_1 & e_{2n} & \dots &  & c_n \\
  \end{pmatrix}.
$$
Since $p(x)$ is nonnegative, the matrix $A(x_1)$ is positive semidefinite for all $x_1 \in \mathbb{R}$, and therefore $C$ is positive semidefinite. Let $C^{\dagger}$ be the pseudo-inverse of $C$, $a = (a_2, \dots, a_n)^T$, $b = (b_2, \dots, b_n)^T$, and $c = (c_2, \dots, c_n)^T$. We define $t(x_1)$ to be the univariate quadratic polynomial given by 
$$t(x_1) = (a+bx_1)^T C^{\dagger} (a+bx_1).$$
Let $I$ denote the $(n-1) \times (n-1)$ identity matrix. We now prove two claims concerning $t(x_1)$.

\vspace{-0.4cm}

\begin{equation}
\longbox{{\it  The polynomial $u(x_1) - t(x_1)$ is nonnegative, $(I - CC^{\dagger}) a = 0$, and $(I - CC^{\dagger}) b = 0$.}}\tag{$2.2.1$}
\label{eq:C_inverse}
\end{equation}
We recall the generalized Schur complement (see, e.g., \cite{Boyd}): A symmetric matrix
$M = \begin{pmatrix}
     X & Y \\
     Y^T & Z
  \end{pmatrix}$
is positive semidefinite if and only if $Z$ is positive semidefinite, $(I - ZZ^{\dagger}) Y^T = 0$, and the matrix $X - YZ^{\dagger}Y^T$ is positive semidefinite. Since the matrix $A(x_1)$ is positive semidefinite for every $x_1 \in \mathbb{R}$, it follows that $u(x_1) - (a+bx_1)^T C^{\dagger} (a+bx_1) = u(x_1) - t(x_1)$ is nonnegative and that $(I - CC^{\dagger}) (a+bx_1) = 0$ for every $x_1 \in \mathbb{R}$. Hence, $(I - CC^{\dagger}) a = 0$ and $(I - CC^{\dagger}) b = 0$.

\vspace{-0.4cm}

\begin{equation}
\longbox{{\it The quadratic polynomial $\bar{x}^T A \bar{x} + t(x_1)$ is nonnegative.}}
\label{eq:quad_part_nonneg}\tag{$2.2.2$}
\end{equation}
Observe that 
$t(x_1) = (a+bx_1)^T C^{\dagger} (a+bx_1) = (1 \:\:\: x_1)^T 
\begin{pmatrix}
      a^TC^{\dagger}a & a^TC^{\dagger}b \\
      b^TC^{\dagger}a & b^TC^{\dagger}b \\
 \end{pmatrix} (1 \:\:\: x_1)$.

\noindent We then have $\bar{x}^T A \bar{x} + t(x_1) = \bar{x}^T L \bar{x}$, where
$$
L = 
\begin{pmatrix}
a^TC^{\dagger}a & a^TC^{\dagger}b & a^T \\
b^TC^{\dagger}a & b^TC^{\dagger}b & b^T \\
a & b & C
\end{pmatrix}.
$$
Note that $C$ is positive semidefinite, $(I - CC^{\dagger}) (a \quad b) = 0$, and that the matrix 
$$\begin{pmatrix}
      a^TC^{\dagger}a & a^TC^{\dagger}b \\
      b^TC^{\dagger}a & b^TC^{\dagger}b \\
 \end{pmatrix} - (a \:\:\: b)^T C^{\dagger} (a \:\:\: b)$$ is positive semidefinite
as it equals the zero matrix. By the generalized Schur complement, it follows that $L$ is positive semidefinite. Therefore, the quadratic polynomial $\bar{x}^T A \bar{x} + t(x_1)$ is nonnegative.

\medskip

To finish the proof of the theorem, we now write $p(x)$ as follows:
$$p(x) = \bar{x}^T A \bar{x} + u(x_1) = \bar{x}^T A \bar{x} + t(x_1) + u(x_1) - t(x_1).$$
By \eqref{eq:C_inverse}, the univariate polynomial $u(x_1) - t(x_1)$ is nonnegative, and by \eqref{eq:quad_part_nonneg}, the quadratic polynomial $\bar{x}^T A \bar{x} + t(x_1)$ is nonnegative.
\end{proof}

\begin{remark}
The second (and weaker) assertion in the statement of Theorem \ref{thm:nonneg_uni_plus_quadratic}, i.e., that $p(x)$ is nonnegative if and only if it is sos, can also be obtained via the following theorem (see \cite{ChoiLamRez80} for a self-contained proof and \cite{AIP} for a discussion of related literature). Recall that a \emph{form} (or a homogeneous polynomial) is a polynomial where all the monomials have the same degree.
\vspace{-6.5mm}
\begin{itemize}
\item[]
\begin{theorem}[\cite{AIP, ChoiLamRez80}]
Let $f(u_1, u_2, v_1, \dots, v_m)$ be a form in the variables $u = (u_1, u_2)$ and $v = (v_1, \dots, v_m)$ that is a quadratic form in $v$ for fixed $u$ and a form (of any degree) in $u$ for fixed $v$. Then, $f(u_1, u_2, v_1, \dots, v_m)$ is nonnegative if and only if it is sos.
\label{thm:biform}
\end{theorem}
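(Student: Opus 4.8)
The plan is to use the Gram-matrix representation of $f$ in the $v$-variables. Since $f$ is a quadratic form in $v$ for fixed $u$, we may write $f(u,v)=v^{T}Q(u)v$, where $Q(u)$ is an $m\times m$ symmetric matrix each of whose entries is a form in $u=(u_{1},u_{2})$; comparing degrees, if $f\not\equiv 0$ these entries all have degree $\deg f-2$. If $f\equiv 0$ there is nothing to prove, and otherwise $\deg f$ must be even (a nonnegative form of odd degree vanishes identically), say $\deg f=2d+2$, so the $Q_{ij}(u)$ are binary forms of degree $2d$. Moreover, since for each fixed $u$ the map $v\mapsto v^{T}Q(u)v$ is a nonnegative quadratic form, nonnegativity of $f$ on $\R^{2}\times\R^{m}$ is equivalent to $Q(u)\succeq 0$ for every $u\in\R^{2}$.

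The crux is then the following matrix factorization claim: a symmetric matrix $Q(u)$ of binary forms of degree $2d$ with $Q(u)\succeq 0$ for all $u\in\R^{2}$ can be written as $Q(u)=\sum_{k}G_{k}(u)G_{k}(u)^{T}$, where each $G_{k}(u)$ is a matrix of binary forms of degree $d$. Granting this, writing $g_{k,j}(u)$ for the $j$-th column of $G_{k}(u)$ gives
\[
f(u,v)=\sum_{k}v^{T}G_{k}(u)G_{k}(u)^{T}v=\sum_{k}\|G_{k}(u)^{T}v\|^{2}=\sum_{k,j}\bigl(g_{k,j}(u)^{T}v\bigr)^{2},
\]
and each $g_{k,j}(u)^{T}v$ is a form of degree $d+1$ in $(u,v)$; hence $f$ is a sum of squares of forms. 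The converse implication is trivial, since any sum of squares is nonnegative.

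It remains to prove the factorization claim, which is the main obstacle. I would dehomogenize in $u$: put $\tilde{Q}(t):=Q(t,1)$, a symmetric $m\times m$ univariate polynomial matrix with $\tilde{Q}(t)\succeq 0$ for all $t\in\R$, and invoke the spectral (matrix Fej\'er--Riesz) factorization of univariate polynomial matrices that are positive semidefinite on the real line, which yields $\tilde{Q}(t)=\sum_{k}\tilde{G}_{k}(t)\tilde{G}_{k}(t)^{T}$ with $\deg\tilde{G}_{k}\le d$; rehomogenizing each $\tilde{G}_{k}$ with the variable $u_{2}$ then produces matrices of binary forms of degree exactly $d$. Two points require care. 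First, the degree can drop ``at infinity'': if $u_{2}$ divides $Q(u)$ entrywise, then letting $u_{2}\to 0^{\pm}$ in $Q(u)\succeq 0$ shows that in fact $u_{2}^{2}$ divides $Q(u)$, so one writes $Q(u)=u_{2}^{2}Q''(u)$ with $Q''\succeq 0$ of degree $2d-2$ and treats $v^{T}Q''v$ by an outer induction on degree, multiplying the resulting square forms by $u_{2}$. Second, the leading coefficient $Q(1,0)$ of $\tilde{Q}$ may be singular, so one needs the version of univariate spectral factorization that allows positive semidefinite values and still gives $\deg\tilde{G}_{k}\le d$. An alternative self-contained route is induction on $m$: peel off a rank-one term $\ell(u)\ell(u)^{T}$, with $\ell(u)$ a vector of degree-$d$ binary forms, so that $Q(u)-\ell(u)\ell(u)^{T}$ stays positive semidefinite for all $u$ and drops in rank, using the scalar fact that a nonnegative binary form is sos together with Schur-complement reasoning in $\R[u_{1},u_{2}]$---the subtlety there being that Schur complements involve division, so one must keep track of the common polynomial factors introduced. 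In either approach, this matrix-valued binary-form factorization together with the degree bookkeeping is the essential content of the theorem.
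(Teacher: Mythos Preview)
The paper does not supply its own proof of this theorem: it is quoted as a known result, with the remark ``see \cite{ChoiLamRez80} for a self-contained proof and \cite{AIP} for a discussion of related literature.'' So there is no in-paper argument to compare against; the appropriate comparison is with those references.

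Your approach is correct in outline and is in fact the one taken in \cite{AIP}: write $f(u,v)=v^{T}Q(u)v$ with $Q(u)$ a symmetric matrix of binary forms, reduce nonnegativity of $f$ to $Q(u)\succeq 0$ for all $u$, dehomogenize to a univariate PSD polynomial matrix, and invoke the matrix Fej\'er--Riesz (spectral) factorization to obtain $Q(u)=L(u)L(u)^{T}$ with $L$ having entries of the right degree. The two caveats you flag---possible degree drop at infinity (handled by pulling out an even power of $u_{2}$) and a singular leading coefficient $Q(1,0)$ (requiring the PSD, not just PD, version of the factorization with the correct degree bound)---are exactly the points that need attention, and your treatment of them is sound. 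The alternative rank-peeling/Schur-complement route you sketch is closer in spirit to the argument in \cite{ChoiLamRez80}; as you note, the delicate step there is controlling denominators when forming Schur complements over $\R[u_{1},u_{2}]$, which is where the two-variable hypothesis (nonnegative binary forms are sos, and $\R[u_1,u_2]$ is a UFD with benign prime structure for this purpose) gets used. Either route suffices; the spectral-factorization argument is cleaner once one accepts the matrix Fej\'er--Riesz theorem as a black box, while the inductive argument is more self-contained but requires more bookkeeping.
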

\end{itemize}

\noindent Following the notation in the proof of Theorem \ref{thm:nonneg_uni_plus_quadratic}, we have $p(x) = \tilde{x}^T A(x_1) \tilde{x}$. 
Since $p(x)$ is nonnegative, the matrix $A(x_1)$ is positive semidefinite for every $x_1 \in \mathbb{R}$. Let $\tilde{A}(x_1, x_{n+1})$ be the $n \times n$ matrix whose entries are obtained by homogenizing (see, e.g., \cite{Reznick}) the entries of $A(x_1)$. It is easy to see that $\tilde{x}^T \tilde{A}(x_1, x_{n+1}) \tilde{x}$ is then the form obtained by homogenizing $p(x) = \tilde{x}^T A(x_1) \tilde{x}$ and is therefore nonnegative. Now we can employ Theorem \ref{thm:biform} with $(u_1, u_2) = (x_1, x_{n+1})$ and $(v_1, \dots, v_n) = (1, x_2, \dots, x_n)$ to deduce that $\tilde{x}^T \tilde{A}(x_1, x_{n+1}) \tilde{x}$ is sos. Upon dehomogenizing by setting $x_{n+1} = 1$, we conclude that $p(x) = \tilde{x}^T A(x_1) \tilde{x}$ is sos. This completes the proof. However, this proof does not show that $p(x)$ can be written as the sum of a nonnegative univariate and a nonnegative quadratic polynomial.
\end{remark}

\begin{remark}
The proof of Theorem \ref{thm:nonneg_uni_plus_quadratic} suggests a simple algorithm for checking nonnegativity of a univariate plus quadratic polynomial $p(x)$ that does not require semidefinite programming. Using the notation of the proof, we can check nonnegativity of $p(x)$ by equivalently testing that $C$ is positive semidefinite, $(I - CC^{\dagger}) a = 0$, $(I - CC^{\dagger}) b = 0$, and that the univariate polynomial $u(x_1) - t(x_1)$ is nonnegative\footnote{Testing nonnegativity of a (nonconstant) univariate polynomial can be done, e.g., by checking that all real roots have even multiplicity and that the polynomial takes a positive value at an arbitrary point (which is not a root).}.
\end{remark}

\subsection{Sums of nonnegative separable and nonnegative quadratic polynomials}\label{subsec:nonneg_sep_plus_nonneg_quad}

The prevailing idea in Lemma \ref{lem:separable_nonneg} and Theorem \ref{thm:nonneg_uni_plus_quadratic} is to write a nonnegative polynomial as the sum of nonnegative univariate and nonnegative quadratic polynomials. It is therefore natural to more generally investigate the relationship between the following three sets of polynomials:
\begin{itemize}
\item $N^{SPQ}_{n,d}$: the set of nonnegative SPQ polynomials in $n$ variables and degree $d$,

\item $\Sigma^{SPQ}_{n,d}$: the set of sos SPQ polynomials in $n$ variables and degree $d$,

\item $(N^S+N^Q)_{n,d}$: the set of SPQ polynomials in $n$ variables and degree $d$ that can be written as the sum of a nonnegative separable polynomial and a nonnegative quadratic polynomial.\footnote{By Lemma \ref{lem:separable_nonneg}, $(N^S+N^Q)_{n,d}$ is the same set as the set of polynomials in $n$ variables and degree $d$ that can be written as the sum of nonnegative \emph{univariate} polynomials and a nonnegative quadratic polynomial. Interestingly, $(N^S+N^Q)_{n,d}$ is different from the set of SPQ polynomials in $n$ variables and degree $d$ that can be written as the sum of a nonnegative separable polynomial and a nonnegative quadratic \emph{form} (see Remark \ref{rem:minimal_2_2} and Lemma \ref{lem:minimal_2_2}).}
\end{itemize}
We have
$$(N^S+N^Q)_{n,d} \: \subseteq \: \Sigma^{SPQ}_{n,d} \: \subseteq \: N^{SPQ}_{n,d},$$
where the second inclusion is evident and the first follows from Lemma \ref{lem:separable_nonneg} and Theorem \ref{thm:Hilbert}. One might be tempted to show that $(N^S+N^Q)_{n,d} = N^{SPQ}_{n,d}$, which would imply $\Sigma^{SPQ}_{n,d} = N^{SPQ}_{n,d}$ and prove that a nonnegative SPQ polynomial is sos. However, this approach would not work, as the following lemma shows that even equality between $(N^S+N^Q)_{n,d}$ and $\Sigma^{SPQ}_{n,d}$ does not hold. We present an example of an \emph{sos} SPQ polynomial in $2$ variables and degree $4$ that cannot be written as the sum of a nonnegative separable and a nonnegative quadratic polynomial. Since we have $(N^S+N^Q)_{n, d} = \Sigma^{SPQ}_{n, d}$ for $n=1$ and for $d=2$, this example is minimal in both degree and dimension.

\begin{lemma}
The bivariate quartic polynomial $$p(x) = x_1^4 - x_1^2 + 2x_1 + x_2^4 - x_2^2 - 2x_2 + \frac{12}{5} - 2x_1x_2$$ belongs to $\Sigma^{SPQ}_{2, 4} \setminus (N^S+N^Q)_{2, 4}$.
\label{lem:counterexample_for_nonneg_sum}
\end{lemma}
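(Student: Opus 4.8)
The plan is to establish the two memberships separately. For the easy direction, $p \in \Sigma^{SPQ}_{2,4}$, I would exhibit an explicit sum-of-squares certificate; a natural guess is to complete squares in the separable part, writing $x_1^4 - x_1^2 + 2x_1 + \tfrac{6}{5}$ and $x_2^4 - x_2^2 - 2x_2 + \tfrac{6}{5}$ each as a sum of squares of univariate polynomials (possible since each is a nonnegative univariate polynomial — one should check the discriminant/roots to confirm nonnegativity), and separately certifying that whatever is left over, together with $-2x_1x_2$, is a nonnegative (hence sos) quadratic form or quadratic polynomial. Actually the cleanest approach is probably to directly search for a Gram matrix $Q \succeq 0$ with $p(x) = z(x)^T Q z(x)$, $z(x) = (1,x_1,x_2,x_1^2,x_2^2)^T$ (note $p$ has no $x_1^2x_2, x_1x_2^2, x_1^2x_2^2$ terms, so this restricted monomial vector suffices), and present the resulting matrix; verifying $Q \succeq 0$ and that the quadratic form matches $p$ is then a routine computation.

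The substantive direction is $p \notin (N^S+N^Q)_{2,4}$, i.e.\ no decomposition $p = s(x) + q(x)$ with $s$ a nonnegative separable polynomial and $q$ a nonnegative quadratic polynomial exists. I would argue by contradiction: suppose $s(x) = s_1(x_1) + s_2(x_2)$ is nonnegative separable (so by Lemma~\ref{lem:separable_nonneg} we may as well take each $s_i$ itself nonnegative after shifting constants) and $q(x)$ is a nonnegative quadratic polynomial with $s+q = p$. Matching the degree-$4$ (and degree-$3$) terms forces $s_1(x_1) = x_1^4 + \alpha_1 x_1^3 + \dots$ but $p$ has no cubic terms, and the quadratic $q$ contributes nothing above degree $2$, so in fact $s_1(x_1) = x_1^4 + p_1 x_1^2 + r_1 x_1 + c_1$ and $s_2(x_2) = x_2^4 + p_2 x_2^2 + r_2 x_2 + c_2$ for some reals. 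Then $q(x) = p(x) - s(x)$ is the quadratic polynomial $q(x) = (-1-p_1)x_1^2 + (-1-p_2)x_2^2 - 2x_1x_2 + (2 - r_1)x_1 + (-2 - r_2)x_2 + (\tfrac{12}{5} - c_1 - c_2)$. The constraints are: (a) $s_1, s_2 \geq 0$ on $\R$, and (b) $q \succeq 0$, i.e.\ the $3\times 3$ symmetric matrix representing $q$ in the basis $(1,x_1,x_2)$ is positive semidefinite.

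The key obstruction — and the main obstacle in the proof — is that the cross term $-2x_1x_2$ must come entirely from $q$, so the $2\times 2$ principal block of $q$'s Gram matrix is $\begin{pmatrix} -1-p_1 & -1 \\ -1 & -1-p_2 \end{pmatrix}$, which must be positive semidefinite; this forces $-1-p_1 \geq 0$ and $-1-p_2 \geq 0$ and $(-1-p_1)(-1-p_2) \geq 1$, so in particular $p_1 \leq -1$ and $p_2 \leq -1$, hence both $p_1,p_2 \leq -2$ by the product condition (if one equals $-1$ the product is $0<1$). But then $s_1(x_1) = x_1^4 + p_1 x_1^2 + r_1 x_1 + c_1$ with $p_1 \leq -2$ is a quartic with a fairly deep double well; I would show that requiring $s_1 \geq 0$ forces $c_1$ to be large (bounded below by some function of $p_1, r_1$), and similarly for $c_2$, and then show that the remaining scalar constraint from $q \succeq 0$ — roughly that $\tfrac{12}{5} - c_1 - c_2$ be large enough to dominate a Schur-complement term built from $(2-r_1, -2-r_2)$ and the inverse of the $2\times 2$ block — cannot be met: the budget $\tfrac{12}{5}$ for the constant is too small. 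Concretely I would (i) derive a sharp lower bound $c_i \geq g(p_i, r_i)$ for nonnegativity of $s_i$ (minimizing $x^4 + p_i x^2 + r_i x$ over $x\in\R$), (ii) derive from the PSD condition on the full $3\times 3$ matrix the inequality $\tfrac{12}{5} - c_1 - c_2 \geq (2-r_1,\,-2-r_2)\,B^{\dagger}\,(2-r_1,\,-2-r_2)^T$ where $B$ is the $2\times2$ block above, and (iii) combine to get a contradiction for all admissible $(p_1,p_2,r_1,r_2)$ — most likely the worst case is the symmetric one $p_1=p_2$, $r_1 = -r_2$, where the computation reduces to a one- or two-variable inequality that can be checked directly. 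I expect step (iii), closing the inequality uniformly over the parameter region, to be where the actual work lies; the choice of the constant $\tfrac{12}{5}$ in $p$ strongly suggests the authors tuned it so the margin is exactly zero at the extremal configuration, so the contradiction will be tight.
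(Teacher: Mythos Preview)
For the sos direction, your Gram-matrix plan matches the paper's, but two details need correction. First, the separable split $x_1^4 - x_1^2 + 2x_1 + \tfrac{6}{5}$ is \emph{not} nonnegative (it equals $-\tfrac{4}{5}$ at $x_1=-1$), so that shortcut fails. Second, your justification for the restricted basis $(1,x_1,x_2,x_1^2,x_2^2)$ is incorrect: the absence of $x_1^2x_2$, $x_1x_2^2$, $x_1^2x_2^2$ in $p$ does not by itself exclude $x_1x_2$ from an sos decomposition (the half-Newton-polytope contains the lattice point $(1,1)$). The paper simply uses the full basis $(1,x_1,x_2,x_1^2,x_1x_2,x_2^2)$ and exhibits an explicit positive-definite Gram matrix; you should do the same.

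For the non-membership direction, your setup agrees with the paper's, but the approaches then diverge substantially. Your plan is a \emph{primal} infeasibility argument: bound $c_i$ below via the minimum of $x^4+p_ix^2+r_ix$, combine with the Schur complement of the $3\times 3$ Gram matrix of $q$, and show the constant budget $\tfrac{12}{5}$ is exhausted. The paper instead gives a \emph{dual} certificate: after parametrizing the Gram matrices $A,B,C$ of $s_1,s_2,q$, it forms $E=A+C$ (a sum of PSD matrices, hence PSD) and exhibits two explicit positive-definite matrices $\tilde B,\tilde E$ with $\operatorname{Tr}(B\tilde B)+\operatorname{Tr}(E\tilde E)=-8.2<0$, an immediate contradiction. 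This reduces the whole multi-parameter feasibility question to one linear computation, whereas your step~(iii) must close an inequality over a parameter region with no closed-form description (the minimum of a depressed quartic with a linear term involves a cubic resolvent, so your $g(p_i,r_i)$ has no clean formula).

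One concrete error in your sketch: from $(-1-p_1)(-1-p_2)\ge 1$ with both factors nonnegative you cannot conclude $p_1,p_2\le -2$; e.g.\ $p_1=-\tfrac{3}{2}$, $p_2=-3$ is admissible. Also, the paper's dual value $-8.2$ shows that $\tfrac{12}{5}$ was not tuned to make the margin zero, so your expectation of a tight symmetric extremal case is likely misleading. Your primal route may be salvageable in principle, but the hard step is genuinely hard; the separating-hyperplane idea is what is missing.
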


\begin{proof}
Clearly, the polynomial $p(x)$ is SPQ. We prove the two claims separately.
\leqnomode
\begin{equation}
\longbox{{\it The polynomial $p(x)$ is sos.}}
\label{eq:ex_p_sos}\tag{$2.6.1$}
\end{equation}
One can observe that $p(x) = z(x)^T Q z(x)$ where
$z(x) =
(1, x_1, x_2, x_1^2, x_1x_2, x_2^2)^T
$
and $Q$ is the matrix
$$
\hspace{-1.2cm}
\small{Q \: = \: \frac{1}{240}
\left(\begin{array}{cccccc} 576 & 240 & -240 & -205 & -108 & -205\\ 240 & 170 & -132 & 0 & -99 & -99\\ -240 & -132 & 170 & 99 & 99 & 0\\ -205 & 0 & 99 & 240 & 0 & -42\\ -108 & -99 & 99 & 0 & 84 & 0\\ -205 & -99 & 0 & -42 & 0 & 240 \end{array}\right)}.
$$
It can be easily checked that $Q$ is positive semidefinite (in fact, positive definite\footnote{Whenever we state a matrix is positive definite, this claim is supported by a rational $LDL^T$ factorization of the matrix. The operations showing that $p(x) = z(x)^T Q z(x)$ and that $Q$ is positive definite can be found online at \\ \url{http://colab.research.google.com/github/cdibek/spq_polynomials/blob/main/Lemma_2_6_Proof.ipynb}.}). Therefore, $p(x)$ is sos (cf. Theorem \ref{thm:sos_psd}).

\vspace{-0.3cm}

\newpage
\begin{equation}
\longbox{{\it The polynomial $p(x)$ cannot be written as the sum of a nonnegative separable and a nonnegative quadratic polynomial.}}
\label{eq:ex_p_not_nonneg_sum}\tag{$2.6.2$}
\end{equation}
Assume for the sake of contradiction that $p(x) = s(x) + q(x)$ where $s(x)$ is a nonnegative separable polynomial and $q(x)$ is a nonnegative quadratic polynomial. By Lemma \ref{lem:separable_nonneg}, $s(x)$ can be written as $s(x) = s_1(x_1) + s_2(x_2)$ where $s_1(x_1), s_2(x_2)$ are nonnegative univariate polynomials. We observe that for some choice of parameters $a, b, c, d, e_1, e_2 \in \mathbb{R}$, the polynomial $p(x)$ is the sum of the following three polynomials:
\begin{equation*}
\begin{aligned}
s_1(x_1) & = x_1^4 - (a+1)x_1^2 - (2c-2)x_1 + e_1, \\
s_2(x_2) & = x_2^4 - (b+1)x_2^2 - (2d+2)x_2 + e_2, \\
q(x) & = ax_1^2 - 2x_1x_2 + bx_2^2 + 2cx_1 + 2dx_2 + \frac{12}{5} - e_1 - e_2.
\end{aligned}
\end{equation*}
The nonnegative polynomials $s_1(x_1), s_2(x_2), q(x)$ are sos since they are univariate or quadratic. Thus, by Theorem \ref{thm:sos_psd}, there exist positive semidefinite matrices $A, B, C$ such that 
$$s_1(x_1) = z_1(x_1)^TAz_1(x_1), \quad \quad s_2(x_2) = z_2(x_2)^TBz_2(x_2), \quad \quad q(x) = z(x)^TCz(x),$$
where $z_1(x_1) = (1, x_1, x_1^2)^T$, $z_2(x_2) = (1, x_2, x_2^2)^T$, $z(x) = (1, x_1, x_2)^T$, and
$$
\small{
A = 
  \begin{pmatrix}
     e_1 & -c+1 & A_{13} \\
    -c+1 & -2A_{13}-a-1 & 0 \\
     A_{13} & 0 & 1
  \end{pmatrix}, \:
B = 
  \begin{pmatrix}
     e_2 & -d-1 & B_{13} \\
     -d-1 & -2B_{13}-b-1 & 0 \\
     B_{13} & 0 & 1
  \end{pmatrix}, \:
C = 
  \begin{pmatrix}
     \frac{12}{5}-e_1-e_2 & c & d \\
     c & a & -1 \\
     d & -1 & b
  \end{pmatrix}}.
$$
It follows that the matrix
$$
\small{
E = A + C =
  \begin{pmatrix}
     \frac{12}{5} - e_2 & 1 & A_{13} + d\\
     1 & -2A_{13}-1 & -1 \\
     A_{13} + d & -1 & b+1
  \end{pmatrix}}
$$
is positive semidefinite. In the rest of the proof, we show that the matrices $B$ and $E$ cannot be positive semidefinite at the same time, leading to a contradiction. First, for notational convenience, we do the following change of variables: $b+1 \rightarrow v$, $d \rightarrow y$, $e_2 \rightarrow u$, $-A_{13} \rightarrow w$, and $-B_{13} \rightarrow t$. We have
$$
\small{
B = 
  \begin{pmatrix}
     u & -y-1 & -t \\
    -y-1 & 2t-v & 0 \\
     -t & 0 & 1
  \end{pmatrix}, \quad
E = 
  \begin{pmatrix}
     \frac{12}{5} - u & 1 & y-w\\
     1 & 2w-1 & -1 \\
     y-w & -1 & v
  \end{pmatrix}}.
$$
Now, consider the following two matrices:
$$
\small{
\tilde B = 
  \begin{pmatrix}
     72 & 56 & 60 \\
     56 & 60 & 46 \\
     60 & 46 & 51
  \end{pmatrix}, \quad \quad
\tilde E = 
  \begin{pmatrix}
     72 & -27 & 56 \\
     -27 & 56 & 5 \\
      56 & 5 & 60
  \end{pmatrix}}.
$$
It can easily be checked that the matrices $\tilde B$ and $\tilde E$ are positive definite. Since the matrices $B, E, \tilde B, \tilde E$ are all positive semidefinite, we must have $\text{Tr}(B \tilde B) + \text{Tr} (E \tilde E) \geq 0$, where for a matrix $M$, the notation $\text{Tr} (M)$ denotes the trace of $M$. We have
$$\text{Tr} (B \tilde B) + \text{Tr} (E \tilde E) = 72 u - 60v - 112y - 61 + 60v - 72u + 112y + 52.8 = -8.2,$$
a contradiction.
\end{proof}

Given the polynomial $p(x)$ given in Lemma \ref{lem:counterexample_for_nonneg_sum}, it is straightforward to construct polynomials in $\Sigma^{SPQ}_{n,d} \setminus (N^S+N^Q)_{n,d}$ for any $n \geq 2$ and $d \geq 4$. We omit the proof of this construction as it is very similar in style to the proof of Theorem~\ref{thm:higher_deg_dim} in the next section.




\begin{remark}
\label{rem:minimal_2_2}
In view of the definition of an SPQ polynomial (cf. Definition \ref{defn:spq}), observe that any SPQ polynomial can be written as the sum of a separable polynomial and a quadratic form (since we may always push the constant term and the degree-1 terms into the separable part). However, this distinction becomes more subtle when we consider the cone $(N^S+N^Q)_{n,d}$. Let $(N^S+N^{Q_f})_{n,d}$ denote the set of polynomials in $n$ variables and degree $d$ that can be written as the sum of a nonnegative separable polynomial and a nonnegative quadratic form. Although the definition of an SPQ polynomial remains the same when we replace ``quadratic polynomial" with ``quadratic form", the cones $(N^S+N^Q)_{n,d}$ and $(N^S+N^{Q_f})_{n,d}$ are not the same. While the inclusion $(N^S+N^{Q_f})_{n,d} \subseteq (N^S+N^Q)_{n,d}$ is clear, the following example shows that the converse inclusion does not hold. This example is minimal in degree and dimension as it belongs to $(N^S+N^Q)_{2,2} \setminus (N^S+N^{Q_f})_{2,2}$ and since $(N^S+N^{Q_f})_{n,d} = (N^S+N^Q)_{n,d}$ for $n=1$.
\end{remark}

\begin{lemma}
\label{lem:minimal_2_2}
The bivariate quadratic polynomial
$$p(x) = 8x_1^2 - 4x_1 + 2x_2^2 - 2x_2 + 3 + 8x_1x_2$$
belongs to $(N^S+N^Q)_{2,2} \setminus (N^S+N^{Q_f})_{2,2}$.
\end{lemma}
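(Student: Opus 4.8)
The plan is to verify the two memberships separately, and the easy one is $p \in (N^S+N^Q)_{2,2}$: I would show that $p$ is \emph{itself} a nonnegative quadratic polynomial, so that the trivial decomposition $p = 0 + p$ (with the zero separable polynomial) already witnesses membership. Concretely, writing $p(x) = \tilde{x}^T Q \tilde{x}$ with $\tilde{x} = (1,x_1,x_2)^T$ and
$$Q = \begin{pmatrix} 3 & -2 & -1 \\ -2 & 8 & 4 \\ -1 & 4 & 2 \end{pmatrix},$$
I would check $Q \succeq 0$ --- for instance via the generalized Schur complement recalled in the proof of Theorem~\ref{thm:nonneg_uni_plus_quadratic}: the lower-right block $\begin{pmatrix} 8 & 4 \\ 4 & 2\end{pmatrix}$ is a rank-one positive semidefinite matrix, the range condition holds, and the resulting Schur complement is $3 - \tfrac12 = \tfrac52 \ge 0$. (Equivalently, one can simply observe that $p$ is convex and attains its minimum value $\tfrac52$ along the line $2x_1 + x_2 = \tfrac12$.) Since a nonnegative quadratic polynomial lies in $(N^S+N^Q)_{2,2}$ by definition, this settles the first claim; note also that $p$ is visibly SPQ, as $p = (8x_1^2 - 4x_1 + 2x_2^2 - 2x_2 + 3) + 8x_1x_2$.

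For the harder direction, $p \notin (N^S+N^{Q_f})_{2,2}$, I would argue by contradiction. Suppose $p = s_1(x_1) + s_2(x_2) + q(x)$ with $s_1, s_2$ nonnegative univariate polynomials (necessarily of degree at most $2$) and $q$ a nonnegative quadratic \emph{form}. Write $s_1(x_1) = \alpha x_1^2 + \beta x_1 + \gamma$, $s_2(x_2) = \delta x_2^2 + \epsilon x_2 + \zeta$, and $q(x) = A x_1^2 + 8 x_1 x_2 + B x_2^2$ (the $x_1 x_2$ coefficient is forced to be $8$). Matching the remaining coefficients of $p$ forces $\beta = -4$, $\epsilon = -2$, $\alpha + A = 8$, and $\delta + B = 2$. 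Now observe that nonnegativity of $s_1(x_1) = \alpha x_1^2 - 4x_1 + \gamma$ forces $\alpha > 0$ (if $\alpha = 0$, the nonzero linear term makes $s_1$ unbounded below), hence $A = 8 - \alpha < 8$; similarly $\delta > 0$ and $B = 2 - \delta < 2$. On the other hand, nonnegativity of the form $q$ requires $\begin{pmatrix} A & 4 \\ 4 & B\end{pmatrix} \succeq 0$, hence $A, B \ge 0$ and $AB \ge 16$. But $0 \le A < 8$ and $0 \le B < 2$ give $AB < 16$, a contradiction.

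I expect the only real point to be recognizing this bookkeeping, rather than any technical obstacle: the cross term $8x_1x_2$ forces the product of the pure-square coefficients \emph{inside the form} to be at least $16$, while the separable summands, in order to absorb the linear terms $-4x_1$ and $-2x_2$ without going unbounded below, are forced to siphon off a strictly positive amount from each of the $x_1^2$ and $x_2^2$ coefficients, leaving strictly less than $8$ and $2$ for the form. Once this is seen, no semidefinite programming is needed. I would also add a sentence explaining why the same polynomial \emph{does} admit a decomposition once quadratic polynomials (not merely forms) are allowed: the obstruction disappears because the linear and constant terms can then be kept inside the quadratic part --- indeed $p$ itself is such a part, as shown in the first paragraph.
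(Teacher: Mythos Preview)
Your proof is correct and follows essentially the same approach as the paper: both establish $p\in(N^S+N^Q)_{2,2}$ by noting that $p$ is itself a nonnegative quadratic polynomial, and both derive $p\notin(N^S+N^{Q_f})_{2,2}$ by matching coefficients in a hypothetical decomposition $s_1(x_1)+s_2(x_2)+q(x)$ and reaching a contradiction from the positive semidefiniteness constraints. The only cosmetic differences are that the paper certifies nonnegativity of $p$ via an explicit two-square decomposition rather than a Schur complement, and that the paper's contradiction invokes the constraint $ab\ge 4$ coming from $s_1$ (after first forcing $e=8$, hence $a=0$), whereas you bypass the constant terms entirely by observing that $\alpha>0$ already gives the strict inequality $A<8$ needed to contradict $AB\ge 16$.
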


\begin{proof}
The polynomial $p(x)$ belongs to $(N^S+N^Q)_{2,2}$ because it is a nonnegative quadratic polynomial. 
To see that $p(x)$ is nonnegative, observe that $$p(x) = \Big (\frac{\sqrt{5} +1}{2} -2x_1 - x_2 \Big)^2 + \Big (\frac{\sqrt{5} - 1}{2} + 2x_1 + x_2 \Big)^2.$$

Assume for the sake of contradiction that $p(x) = s(x) + q(x)$, where $s(x)$ is a nonnegative separable polynomial and $q(x)$ is a nonnegative quadratic form. By Lemma \ref{lem:separable_nonneg}, the polynomial $s(x)$ can be written as $s(x) = s_1(x_1) + s_2(x_2)$ where $s_1(x_1), s_2(x_2)$ are nonnegative univariate polynomials. We observe that for some choice of parameters $a, b, c, d, e, f \in \mathbb{R}$, the polynomial $p(x)$ is the sum of the following three polynomials:
$$
s_1(x_1) = ax_1^2 - 4x_1 + b, \quad \quad
s_2(x_2) = cx_2^2 - 2x_2 + d, \quad \quad
q(x) = ex_1^2 + 8x_1x_2 + fx_2^2.
$$
The nonnegative polynomials $s_1(x_1), s_2(x_2), q(x)$ are sos since they are quadratic. By Theorem \ref{thm:sos_psd}, there exist positive semidefinite matrices $A, B, C$ such that
$$s_1(x_1) = z_1(x_1)^T A z_1(x_1), \quad \quad s_2(x_2) = z_2(x_2)^T B z_2(x_2), \quad \quad q(x) = z(x)^T C z(x),$$
where $z_1(x_1) = (1, x_1)^T$, $z_2(x_2) = (1, x_2)^T$, $z(x) = (x_1, x_2)^T$, and
$$
A = 
  \begin{pmatrix}
     b & -2 \\
    -2 & a
  \end{pmatrix}, \quad \quad
B = 
  \begin{pmatrix}
     d & -1 \\
     -1 & c
  \end{pmatrix}, \quad \quad
C = 
  \begin{pmatrix}
     e & 4 \\
     4 & f
  \end{pmatrix}.
$$
We have
\begin{itemize}
\itemsep0em
\item $b + d = 3$, $a + e = 8$, $c + f = 2$,
\item $a, b, c, d, e, f \geq 0$, $ab \geq 4$, $cd \geq 1$, and $ef \geq 16$,
\end{itemize}
where the equations in the first item hold since $p(x) = s_1(x_1) + s_2(x_2) + q(x)$ and the inequalities in the second item hold since $A, B, C$ are positive semidefinite. Note that $e \leq 8$ and $f \leq 2$ since $a, c \geq 0$. As we also know that $e, f \geq 0$ and $ef \geq 16$, we conclude that $e = 8$ and $f = 2$. Therefore, $a = 0$, which contradicts the inequality $ab \geq 4$.
\end{proof}

\section{Nonnegative SPQ Polynomials That Are Not Sums of Squares}\label{sec:spq_polynomials}

We have established that already when $(n,d)=(2,4)$, not every nonnegative SPQ polynomial can be written as the sum of a nonnegative separable and a nonnegative quadratic polynomial (cf. see Lemma \ref{lem:counterexample_for_nonneg_sum}). However, this does not rule out the possibility of a sum of squares decomposition for nonnegative SPQ polynomials. The main result of this section is the following theorem, which precisely characterizes degrees and dimensions where nonnegative SPQ polynomials are sos.

\begin{theorem}\label{thm:main_thm}
$\Sigma^{SPQ}_{n,d} = N^{SPQ}_{n,d}$ if and only if $n=1$, or $d=2$, or $(n, d) = (2, 4)$.
\end{theorem}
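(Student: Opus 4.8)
The plan is to treat the two implications separately. The implication ``$n=1$ or $d=2$ or $(n,d)=(2,4)$ $\Rightarrow$ $\Sigma^{SPQ}_{n,d}=N^{SPQ}_{n,d}$'' is immediate from Hilbert's theorem: if $n=1$, an SPQ polynomial is univariate; if $d=2$, the separable part of an SPQ polynomial can contribute only $x_i^2$, $x_i$, and constant terms, so every degree-$2$ SPQ polynomial is a quadratic polynomial; and if $(n,d)=(2,4)$, an SPQ polynomial is in particular a bivariate polynomial of degree at most $4$. In each case Theorem~\ref{thm:Hilbert} gives $N_{n,d}=\Sigma_{n,d}$, so a nonnegative SPQ polynomial in that regime is sos, and since it remains SPQ it lies in $\Sigma^{SPQ}_{n,d}$; the reverse inclusion is trivial.

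For the converse I must show that outside these three regimes the inclusion $\Sigma^{SPQ}_{n,d}\subseteq N^{SPQ}_{n,d}$ is strict. We may assume $d$ is even (for odd $d$, $N^{SPQ}_{n,d}=\emptyset$ and there is nothing to prove), so it suffices to exhibit an element of $N^{SPQ}_{n,d}\setminus\Sigma^{SPQ}_{n,d}$ for every $(n,d)$ with $n\ge 2$, $d\ge 4$ even, $(n,d)\ne(2,4)$. I would do this in two stages. \textbf{Stage 1 (base cases).} Construct an explicit SPQ polynomial $P_{2,6}\in N^{SPQ}_{2,6}\setminus\Sigma^{SPQ}_{2,6}$ and an explicit SPQ polynomial $P_{3,4}\in N^{SPQ}_{3,4}\setminus\Sigma^{SPQ}_{3,4}$; these are the two $\le$-minimal members of the target set, since every target pair $(2,d)$ with $d\ge 6$ dominates $(2,6)$ and every target pair with $n\ge 3$ dominates $(3,4)$. \textbf{Stage 2 (lifting, Theorem~\ref{thm:higher_deg_dim}).} Show that from any $P\in N^{SPQ}_{n_0,d_0}\setminus\Sigma^{SPQ}_{n_0,d_0}$ one obtains a member of $N^{SPQ}_{n,d}\setminus\Sigma^{SPQ}_{n,d}$ for all $n\ge n_0$ and all even $d\ge d_0$: raising the number of variables by adjoining $x_i^{d_0}$ to the separable part for each new variable $i$, and raising the degree by a companion construction. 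Stages 1 and 2 with $(n_0,d_0)\in\{(2,6),(3,4)\}$ then cover all required $(n,d)$.

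For both the base examples and the non-sos half of Stage 2 I would rely on the Gram-matrix duality of Theorem~\ref{thm:sos_psd}, exactly in the spirit of the proof of Lemma~\ref{lem:counterexample_for_nonneg_sum}: because an SPQ polynomial has only degree-$2$ cross-terms, its Gram matrices against $z(x)=(1,x_1,\dots,x_n,x_1^2,\dots)$ form an affine family with a rigid sparsity pattern, and to show a candidate is not sos it suffices to produce one fixed positive definite matrix $\widetilde Q$ for which $\mathrm{Tr}(Q\widetilde Q)$ is negative for every Gram matrix $Q$ of the candidate (which then cannot be positive semidefinite). Nonnegativity I would establish directly, e.g. via a sum-of-squares certificate after multiplication by a suitable positive polynomial (à la Artin) or by a direct analysis of the global minima. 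This trace argument also dispatches the variable-raising step cleanly: padding $\widetilde Q$ with zero rows and columns for the new monomials preserves positive semidefiniteness, and for the enlarged polynomial $P+\sum_{i>n_0}x_i^{d_0}$ the submatrix of any of its Gram matrices indexed by the old monomials is itself a Gram matrix of $P$ (no new monomial pair multiplies into an old monomial), so the trace pairing stays negative.

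The main obstacle I anticipate is twofold. First, merely \emph{finding} the base polynomials: the SPQ restriction rules out the classical Motzkin and Robinson examples, so one needs a fresh, likely computer-assisted, search for SPQ polynomials that are simultaneously nonnegative and non-sos, steered by the structured Gram-matrix duality and the trace/feasibility conditions. Second, the \emph{degree}-raising step of Stage 2 is more delicate than the variable-raising step: appending a high-degree separable term such as $x_1^{d}$ changes the leading form of the polynomial and introduces new high-degree monomials into $z(x)$ that could, a priori, absorb part of the lower-degree coefficients in a putative sos decomposition, so the zero-padding trick fails and one must instead argue via homogenization, a leading-form analysis of the Gram matrix, or a fresh trace certificate tailored to the degree-$d$ construction.
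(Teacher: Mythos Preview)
Your plan is essentially the paper's: invoke Hilbert for the equality direction, produce explicit base examples at $(2,6)$ and $(3,4)$ via a dual/separating-hyperplane certificate for non-sos together with a direct or multiplier-based certificate for nonnegativity, then lift to all remaining $(n,d)$ by adjoining separable monomials. The paper's Theorems~\ref{thm:counter_example_2_6} and~\ref{thm:counter_example_3_4} carry out your Stage~1 exactly in this spirit (the dual certificate is a moment vector $\mu$ making $(z(x)z(x)^T)|_\mu$ positive definite, which is your $\widetilde Q$), and Theorem~\ref{thm:higher_deg_dim} is your Stage~2.

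The one place where the paper does something you do not anticipate is the degree-raising step. You correctly flag that zero-padding the dual certificate can fail once high-degree monomials enter $z(x)$, and you propose homogenization, a leading-form analysis, or a fresh certificate. The paper sidesteps all of this with a soft one-line argument: given $p\in N^{SPQ}_{2,6}\setminus\Sigma^{SPQ}_{2,6}$, set $q_t(x)=p(x)+t\,x_1^{d}$ for $t\ge 0$; each $q_t$ is nonnegative and SPQ of degree $d$, and if $q_t$ were sos for every $t>0$ then by closedness of $\Sigma_{2,d}$ the limit $q_0=p$ would be sos too, a contradiction. Hence some $t>0$ works, and no new certificate is needed. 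For the variable-raising step the paper is likewise simpler than your Gram-submatrix trace argument: if $p(x)+x_{n+1}^{d}=\sum_i q_i^2(x,x_{n+1})$, substitute $x_{n+1}=0$ to obtain $p(x)=\sum_i q_i^2(x,0)$, contradicting $p\notin\Sigma^{SPQ}_{n,d}$. Your approach would work, but the paper's are shorter and avoid any bookkeeping of monomial bases.
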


The equality $\Sigma^{SPQ}_{n,d} = N^{SPQ}_{n,d}$ holds in these cases by virtue of Theorem \ref{thm:Hilbert} (this is true for any polynomial with $n=1$, or $d=2$, or $(n,d)=(2,4)$, so in particular for SPQ polynomials). It remains to prove that the equality does not hold in the other cases. To show this, we present explicit examples of nonnegative SPQ polynomials that are not sos for the minimal cases, that is $(n,d)=(2,6)$ and $(n,d)=(3,4)$. These are given in Section \ref{subsec:minimal_cases}. We then show how to generalize these examples to higher degrees and dimensions in Section \ref{subsec:higher_n_d}.

\subsection{Minimal cases}\label{subsec:minimal_cases}

The next two theorems present examples of nonnegative SPQ polynomials that are not sos in the minimal cases.



\begin{theorem}
The bivariate sextic polynomial
$$p(x) = 17x_1^6 - 20x_1^4 + 7x_1^2 + 18x_1 + 18x_2^4 - 19x_2^2 - 19x_2 + 21 - 20x_1x_2$$
belongs to $N^{SPQ}_{2,6} \setminus \Sigma^{SPQ}_{2,6}$.
\label{thm:counter_example_2_6}
\end{theorem}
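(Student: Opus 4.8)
The statement has two halves, $p\in N^{SPQ}_{2,6}$ (nonnegativity) and $p\notin\Sigma^{SPQ}_{2,6}$ (not a sum of squares). I would prove the second half first, since that is where the SPQ structure does the work. Throughout, the relevant feature of $p$ is that its \emph{only} mixed monomial is $x_1x_2$, of degree $2$; in particular $p$ has no $x_2^6$, $x_1^2x_2^4$, or $x_1^4x_2^2$ term.

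\emph{Not a sum of squares.} Suppose for contradiction that $p=\sum_i q_i^2$. Since $\deg p=6$, each $q_i$ has degree at most $3$, so by Theorem~\ref{thm:sos_psd} there is a positive semidefinite matrix $Q$ with $p=z(x)^TQz(x)$ for $z(x)=(1,x_1,x_2,x_1^2,x_1x_2,x_2^2,x_1^3,x_1^2x_2,x_1x_2^2,x_2^3)^T$. Matching the coefficient of $x_2^6$ gives $Q_{x_2^3,x_2^3}=0$, hence by positive semidefiniteness the whole $x_2^3$ row and column of $Q$ vanish; then matching the coefficient of $x_1^2x_2^4$ forces the $x_1x_2^2$ row/column to vanish, and then matching the coefficient of $x_1^4x_2^2$ forces the $x_1^2x_2$ row/column to vanish. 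So every Gram matrix of $p$ is supported on the seven monomials $1,x_1,x_2,x_1^2,x_1x_2,x_2^2,x_1^3$; equivalently, the degree-$3$ part of each $q_i$ is a scalar multiple of $x_1^3$. Matching the remaining coefficients of $p$ then fixes several entries (e.g. $Q_{x_1^3,x_1^3}=17$, $Q_{x_2^2,x_2^2}=18$) and imposes a family of affine relations on the rest. The proof is then completed exactly as in the proof of Lemma~\ref{lem:counterexample_for_nonneg_sum}: exhibit an explicit rational positive semidefinite matrix $Y$ (supported on the same seven monomials) for which $\text{Tr}(YQ)$ equals a fixed \emph{negative} constant for every Gram matrix $Q$ of $p$; since $Y\succeq 0$ and $Q\succeq 0$ force $\text{Tr}(YQ)\ge 0$, this is a contradiction.

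\emph{Nonnegativity.} Following the idea behind the proof of Theorem~\ref{thm:nonneg_uni_plus_quadratic}, I would write $p(x)=\tilde{x}^T A(x_1)\tilde{x}$ with $\tilde{x}=(1,x_2,x_2^2)^T$, where matching powers of $x_2$ forces $A_{33}=18$, $A_{23}=0$, $A_{12}(x_1)=-\tfrac12(19+20x_1)$, $A_{11}(x_1)=17x_1^6-20x_1^4+7x_1^2+18x_1+21$, and $A_{22}(x_1)=-19-2A_{13}(x_1)$, the only freedom being the off-diagonal entry $A_{13}(x_1)$, which we may take to be a univariate polynomial. If $A(x_1)\succeq 0$ for every $x_1\in\R$, then $p(x)\ge 0$ everywhere. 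Because $A_{23}=0$, positive semidefiniteness of $A(x_1)$ reduces to the univariate polynomial nonnegativity conditions $A_{22}(x_1)\ge 0$, $18A_{11}(x_1)-A_{13}(x_1)^2\ge 0$, and $A_{22}(x_1)\big(18A_{11}(x_1)-A_{13}(x_1)^2\big)-18A_{12}(x_1)^2\ge 0$ (the condition $A_{11}\ge 0$ being automatic), each checkable by counting real roots. The plan is to produce an explicit low-degree polynomial $A_{13}(x_1)$ satisfying all three — $A_{13}(x_1)$ cannot be constant, and should be close to $-10$ near the point where $A_{11}$ is minimal and substantially more negative near $x_1=0$. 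An equally valid alternative is to multiply $p$ by a positive-definite sos multiplier such as $1+x_1^2+x_2^2$ and certify that the product is sos via a rational positive semidefinite Gram matrix. Note that whichever route is taken, the resulting certificate is necessarily \emph{not} of the form ``nonnegative separable plus nonnegative quadratic'', since any such SPQ polynomial is sos while $p$ is not.

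\emph{Expected obstacle.} Conceptually, the harder half is the nonnegativity: the coefficients of $p$ are evidently tuned so that $p$ sits just outside $\Sigma^{SPQ}_{2,6}$, so all the structured certificates of Section~\ref{sec:special_cases} are unavailable and one must locate an ad hoc, essentially computer-assisted certificate — either the right $A_{13}(x_1)$ (the conditions above are genuinely tight near $x_1\approx -1$ and near $x_1=0$) or the right sos multiplier. The not-sos half, by contrast, is routine once the Gram matrix has been cut down to its seven-monomial support; there the only real work is finding the dual certificate $Y$, which is a finite linear-algebraic search.
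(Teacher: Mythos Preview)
Your not-sos argument is essentially the paper's: the paper also restricts the Gram support to the seven monomials $1,x_1,x_2,x_1^2,x_1x_2,x_2^2,x_1^3$ (via the Newton polytope, which is exactly your coefficient-matching cascade), and then exhibits a dual vector $\mu$ whose associated moment matrix $(z(x)z(x)^T)|_\mu$ is positive definite while $\langle\mu,\overrightarrow{p}\rangle<0$. Your matrix $Y$ is precisely that moment matrix, so there is no meaningful difference here.

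Your \emph{primary} route to nonnegativity, however, cannot succeed, and the reason is structural rather than computational. You propose to find a polynomial $A_{13}(x_1)$ for which the $3\times 3$ univariate polynomial matrix $A(x_1)$ is positive semidefinite for every real $x_1$. But a symmetric polynomial matrix in a \emph{single} variable that is PSD pointwise is always an sos-matrix (this is the matrix Fej\'er--Riesz/Yakubovich factorization, cf.\ \cite{AIP} and the discussion around Theorem~\ref{thm:biform}); hence $p(x)=\tilde{x}^TA(x_1)\tilde{x}$ would itself be a sum of squares, contradicting the half you have just proved. So no polynomial $A_{13}$ with the three properties you list can exist, and the ``genuinely tight'' behaviour you anticipate near $x_1\approx -1$ and $x_1\approx 0$ is in fact an outright obstruction. (For each \emph{fixed} $x_1$ a suitable scalar $A_{13}$ does exist, since $p(x_1,\cdot)$ is a nonnegative quartic in $x_2$; what fails is choosing it polynomially in $x_1$.)

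Your fallback --- multiply by a positive sos factor such as $1+x_1^2+x_2^2$ and exhibit a rational Gram matrix for the product --- is a legitimate plan, and is in fact the route the paper takes for the companion example in Theorem~\ref{thm:counter_example_3_4}. For the present theorem the paper uses a different certificate: it first shows $p$ is coercive (so a global minimum exists), and then proves $p$ is nonnegative on the gradient variety by exhibiting explicit polynomial multipliers $\ell_1,\ell_2$ for which
\[
p(x)-\bigl(\ell_1(x),\ell_2(x)\bigr)^T\nabla p(x)
\]
is sos (the gradient-ideal technique of \cite{NiDeSt}). Either certificate is acceptable, but you should drop the $A(x_1)\succeq 0$ plan and commit to one of them.
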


\begin{theorem}
The trivariate quartic polynomial
$$p(x) = x_1^4 + 2x_1^2 + x_2^4 + 2x_2^2 + x_3^4 + 2x_3^2 + \frac{9}{4} + 8x_1x_2 + 8x_1x_3 + 8x_2x_3$$
belongs to $N^{SPQ}_{3,4} \setminus \Sigma^{SPQ}_{3,4}$.
\label{thm:counter_example_3_4}
\end{theorem}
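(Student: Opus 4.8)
The statement has two halves: $p \in N^{SPQ}_{3,4}$ (nonnegativity) and $p \notin \Sigma^{SPQ}_{3,4}$ (not a sum of squares). For nonnegativity, the plan is first to rewrite $p$ so as to expose the obstruction. Using $8\sum_{i<j} x_i x_j = 4(x_1+x_2+x_3)^2 - 4(x_1^2+x_2^2+x_3^2)$ together with $x_i^4 + 2x_i^2 = (x_i^2-1)^2 + 4x_i^2 - 1$, one gets the identity
\[
p(x) \;=\; \sum_{i=1}^3 (x_i^2-1)^2 \;+\; 4(x_1+x_2+x_3)^2 \;-\; \tfrac34 .
\]
Thus $p \ge 0$ is equivalent to the inequality $h(x) := \sum_{i}(x_i^2-1)^2 + 4(\sum_i x_i)^2 \ge \tfrac34$ for all $x \in \R^3$. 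Since $h$ is coercive (leading term $\sum_i x_i^4$), its infimum is attained at a stationary point; writing $S = x_1+x_2+x_3$, stationarity reads $x_k^3 - x_k = -2S$ for $k=1,2,3$, so all three coordinates are roots of the single cubic $t^3 - t + 2S$. This leaves only a few configurations: all coordinates equal (which forces $S = 3t$, hence $t=0$ and $h=3$); all three distinct (which forces $S=0$, hence a permutation of $(-1,0,1)$ and $h=1$); or exactly two coordinates equal and one different, a one-parameter elimination that pins down the last critical value. Verifying that every such value exceeds $\tfrac34$ — in fact the global minimum of $h$ is a bit above $0.85$, so $p$ is positive definite — finishes this half. (One could instead certify $p \ge 0$ by a computer-verified Positivstellensatz identity, in the spirit of the computational checks elsewhere in the paper, should the stationary-point bookkeeping prove cumbersome.)

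For the harder half I would argue by contradiction: suppose $p = \sum_\ell q_\ell^2$ with $\deg q_\ell \le 2$. Since $p(-x)=p(x)$, a standard symmetrization of the Gram matrix shows that $p$ admits such a representation with a block-diagonal Gram matrix $Q = Q_{\mathrm{odd}} \oplus Q_{\mathrm{even}}$ (cf.\ Theorem \ref{thm:sos_psd}), where $Q_{\mathrm{odd}} \in \R^{3\times3}$ is indexed by $x_1,x_2,x_3$, $Q_{\mathrm{even}} \in \R^{7\times7}$ is indexed by $1, x_1^2, x_2^2, x_3^2, x_1x_2, x_1x_3, x_2x_3$, and both are positive semidefinite. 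Matching coefficients of $p$ yields linear equations on the entries of these blocks; the crucial one is that the degree-$4$ part of $p$ equals $x_1^4 + x_2^4 + x_3^4$ with \emph{no} mixed monomials, which tightly constrains the bottom $6\times6$ block $P$ of $Q_{\mathrm{even}}$ — for instance $P_{x_i^2,x_i^2}=1$, $P_{x_i^2, x_ix_j}=0$, and $P_{x_ix_j,x_ix_j} = -2P_{x_i^2,x_j^2} \ge 0$, so $P_{x_i^2,x_j^2}\le 0$. I would then refute feasibility exactly as in the proof of Lemma \ref{lem:counterexample_for_nonneg_sum}: exhibit explicit positive semidefinite matrices $N_{\mathrm{odd}} \in \R^{3\times3}$ and $N_{\mathrm{even}} \in \R^{7\times7}$ such that, after substituting the linear relations, $\langle Q_{\mathrm{odd}}, N_{\mathrm{odd}}\rangle + \langle Q_{\mathrm{even}}, N_{\mathrm{even}}\rangle$ is forced to equal a strictly negative constant, contradicting $\langle Q_{\mathrm{odd}}, N_{\mathrm{odd}}\rangle, \langle Q_{\mathrm{even}}, N_{\mathrm{even}}\rangle \ge 0$. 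Equivalently, one produces an $S_3$-symmetric pseudo-moment functional $L$ on polynomials of degree $\le 4$ that is nonnegative on squares of degree-$\le 2$ polynomials but has $L(p) < 0$; by symmetry this reduces to a small semidefinite feasibility problem in the seven orbit-parameters $L(1), L(x_i^2), L(x_ix_j), L(x_i^4), L(x_i^2x_j^2), L(x_i^3x_j), L(x_i^2x_jx_k)$.

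The main obstacle is this dual certificate. The coefficient matching and the block/parity reductions are routine; the work lies in pinning down the matrices $N_{\mathrm{odd}}, N_{\mathrm{even}}$ (ideally rational, so that the final contradiction is a finite exact computation) that simultaneously annihilate all the free parameters left after imposing the linear relations and leave a negative residue. I would obtain these by solving the small, symmetry-reduced dual semidefinite program numerically and then rationalizing, after which checking positive semidefiniteness of the certificates and evaluating the offending trace (or $L(p)$) is short arithmetic. The nonnegativity half, once the displayed identity is in hand, is entirely elementary.
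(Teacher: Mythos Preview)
Your plan is sound, and both halves are in order; but the two halves relate to the paper's proof in different ways.

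\medskip

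\textbf{Nonnegativity.} Here you take a genuinely different route. The paper certifies $p\ge 0$ by exhibiting a rational positive definite $\tilde Q$ with $(x_1^2+x_2^2+x_3^2)\,p(x)=\tilde z(x)^T\tilde Q\,\tilde z(x)$, i.e.\ a multiplier sos certificate verified by computer. Your identity
\[
p(x)=\sum_{i=1}^3(x_i^2-1)^2+4(x_1+x_2+x_3)^2-\tfrac34
\]
is correct, and the critical-point analysis of $h=\sum_i(x_i^2-1)^2+4S^2$ goes through: the ``all equal'' branch gives $h=3$, the ``all distinct'' branch forces $S=0$ and $h=1$, and in the ``two equal'' branch ($x_1=x_2=a$, $x_3=b$) the stationarity system reduces, after eliminating $b=-(a^3+3a)/2$, to the quartic $u^4+9u^3+27u^2+31u-20=0$ in $u=a^2$, which has a unique positive root $u\approx 0.447$ and yields $h\approx 0.855>\tfrac34$. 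So your argument is elementary and self-contained, whereas the paper's is a machine-checkable algebraic certificate; yours gives more insight (and even shows $p$ is strictly positive), the paper's is shorter to \emph{verify}.

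\medskip

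\textbf{Not sos.} Here your approach coincides with the paper's. The paper produces an explicit pseudo-moment vector $\mu$ with $\langle\mu,\overrightarrow{p}\rangle=-3$ and shows that the associated moment matrix $(z(x)z(x)^T)|_\mu$ is positive definite; this moment matrix is exactly the block-diagonal $N_{\mathrm{odd}}\oplus N_{\mathrm{even}}$ you are after (indeed the paper's matrix visibly splits into a $3\times 3$ odd block and a $7\times 7$ even block, confirming your parity reduction). Your $S_3$-symmetrization is a nice further reduction the paper does not spell out, but the underlying idea---separate $p$ from $\Sigma_{3,4}$ by a psd dual functional---is identical. The only thing you still owe is the actual rational certificate, which the paper supplies explicitly; once you solve the small symmetry-reduced SDP and rationalize, your proof of this half will be essentially a compressed version of theirs.
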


\begin{proof}[Proof of Theorem \ref{thm:counter_example_2_6}]
Clearly, the polynomial $p(x)$ is SPQ. The proof that $p(x)$ is not sos is done via a separating hyperplane argument. More precisely, we present a member $\mu$ of the cone dual to $\Sigma^{SPQ}_{2,6}$ whose inner product with the coefficients of $p(x)$ is negative. We fix the following monomial ordering:
\begin{align*}
v =
(1,  x_1,  x_2,  x_1^2,  x_1x_2,  x_2^2,  x_1^3,  x_1^2x_2,  x_1x_2^2, x_2^3, x_1^4, x_1^3x_2, x_1^2x_2^2, x_1x_2^3, x_2^4, x_1^5, x_1^4x_2, x_1^3x_2^2, x_1^6)^T.
\end{align*}
Let the vector of coefficients of $p(x)$ in the above monomial ordering be denoted by
\begin{align*}
\overrightarrow{p} = (21, 18, -19, 7, -20, -19, 0, 0, 0, 0, -20, 0, 0, 0, 18, 0, 0, 0, 17)^T.
\end{align*}
One can verify\footnote{All the computations in this proof are carried out over rational numbers and can be verified via the following link: \url{http://colab.research.google.com/github/cdibek/spq_polynomials/blob/main/Theorem_3_2_Proof.ipynb}.} that the vector 
\begin{align*}
\mu = (& 11156, -2031, 8817, 4897, -127, 8436, -1457, 3005, -292, 7015, 3302, -37, 3639, 759, \\ & 6730, -1105, 1873, -274, 2245)^T
\end{align*}
satisfies $\langle \mu, \overrightarrow{p} \rangle = -5 < 0$. We claim that for any sos polynomial $q(x)$ containing only the monomials in $v$, we should have $\langle \mu, \overrightarrow{q} \rangle \geq 0$, where $\overrightarrow{q}$ denotes the coefficients of $q(x)$ listed according to the ordering in $v$. Indeed, if $q(x)$ is sos, by Theorem \ref{thm:sos_psd}, it can be written as
$$q(x) = z(x)^TQz(x) = \text{Tr} \Big (Q \: z(x)z(x)^T \Big )$$
for some positive semidefinite matrix $Q$ and vector of monomials\footnote{If $\mathcal{N}$ is the Newton polytope of $q(x)$ (i.e., the convex hull of the exponent vectors of the 19 monomials in $v$), then the extreme points of $\mathcal{N}$ are $(0, 0), (6, 0), (0, 4)$. Hence, if $q(x) = \sum_i q_i^2(x)$ is a sum of squares, then the polynomials $q_i(x)$ are in the subspace spanned by the monomials with exponent vectors in $\frac{1}{2} \mathcal{N}$ (see, e.g., \cite{Reznick2}).}
$$z(x) = (1, x_1, x_2, x_1^2, x_1x_2, x_2^2, x_1^3)^T.$$
It is not difficult to see that
$$\langle \mu, \overrightarrow{q} \rangle =  \text{Tr}\left (Q \big (z(x)z(x)^T \big) |_\mu \right),$$
where 
$$
\big (z(x)z(x)^T \big)|_\mu = \scriptsize{
\left(\begin{array}{ccccccc}
11156 & -2031 & 8817 & 4897 & -127 & 8436 & -1457 \\ 
-2031 & 4897 & -127 & -1457 & 3005 & -292 & 3302 \\ 
8817 & -127 & 8436 & 3005 & -292 & 7015 & -37 \\ 
4897 & -1457 & 3005 & 3302 & -37 & 3639 & -1105 \\ 
-127 & 3005 & -292 & -37 & 3639 & 759 & 1873 \\ 
8436 & -292 & 7015 & 3639 & 759 & 6730 & -274 \\ 
-1457 & 3302 & -37 & -1105 & 1873 & -274 & 2245 
\end{array}\right)
}
$$
is the matrix where each monomial in $z(x)z(x)^T$ is replaced with the corresponding element of the vector $\mu$. We can check that the matrix $\big (z(x)z(x)^T \big)|_{\mu}$ is positive definite.\footnote{The proof of positive definiteness of $\big (z(x)z(x)^T \big)|_{\mu}$ is done by a rational $LDL^T$ factorization and can be found at \\ \url{http://colab.research.google.com/github/cdibek/spq_polynomials/blob/main/Theorem_3_2_Proof.ipynb}.} Now, since $Q$ and $\big (z(x)z(x)^T \big)|_\mu$ are positive semidefinite, we have $\langle \mu, \overrightarrow{q} \rangle =  \text{Tr} \left ( Q \big (z(x)z(x)^T \big)|_\mu \right ) \geq 0$. This completes the proof that $p(x)$ is not sos.

Next, we show that $p(x)$ is nonnegative. Recall that a function $f: \R^n \rightarrow \R$ is \emph{coercive} if for every sequence $\{x_k\}$ with $\{|| x_k ||\} \rightarrow +\infty$, we have $\{ f(x_k) \} \rightarrow +\infty$. It is easy to see that a continuous coercive function achieves its infimum on a closed set (see, e.g., Appendix A.2 of \cite{Bertsekas}).
Hence, a coercive polynomial always has a global minimum, and by the first order necessary condition for optimality, the gradient of the polynomial must vanish at all global minima. It follows that a coercive polynomial is nonnegative if and only if it is nonnegative at the points where its gradient vanishes. In the rest of the proof, we show that $p(x)$ is coercive, and that $p(x) \geq 0$ for all $x \in \R^2$ where the gradient $\nabla p(x) = 0$.

\vspace{-0.4cm}

\leqnomode
\begin{equation}
\longbox{{\it $p(x)$ is coercive.}}
\label{eq:p1_coercive}\tag{$3.2.1$}
\end{equation}
We write $p(x) = q_1(x_1) + q_2(x_2) + r(x)$, where
\begin{equation*}
\begin{aligned}
\hspace{4.5cm}
q_1(x_1) & = 17 x_1^6 - 20 x_1^4 + 6 x_1^2 + 18 x_1, \\
q_2(x_2) & = 17 x_2^4 - 19 x_2^2 - 19 x_2 + 21, \\
r(x) & = x_1^2 - 20 x_1x_2 + x_2^4.
\end{aligned}
\end{equation*}
Since $q_1(x_1), q_2(x_2)$ are univariate polynomials of even degree with a positive leading coefficient, they are coercive. Hence, the polynomial $q_1(x_1) + q_2(x_2)$ is coercive as well since it is the sum of two coercive univariate polynomials. Now, observe that $r(x)$ is bounded below:
$$r(x) = (x_1 - 10 x_2)^2 + (x_2^2 - 50)^2 -2500 \geq -2500, \space \forall x \in \R^2.$$
Therefore, $p(x) = q_1(x_1) + q_2(x_2) + r(x)$ is coercive since it is the sum of a coercive polynomial and a polynomial that is bounded below.

\vspace{-0.3cm}

\begin{equation}
\longbox{{\it $p(x) \geq 0$ for every $x \in \R^2$ that satisfies $\nabla p(x) = 0$.}}
\label{eq:p1_nonnegative}\tag{$3.2.2$}
\end{equation}
Observe that existence of two polynomials $\ell_1(x)$ and $\ell_2(x)$ which make the polynomial
$$p(x) - \Big (\ell_1(x), \: \ell_2(x) \Big)^T \: \nabla p(x)$$
sos would imply the claim (see \cite{NiDeSt} for a more in-depth treatment of this approach to proving nonnegativity). We show that $\ell_1(x) = 0$ and $\ell_2(x) = - \frac{1}{25} x_2^3$ satisfy this property. Indeed, with this choice, we have $p(x) - \big (\ell_1(x), \: \ell_2(x) \big)^T \: \nabla p(x) = \tilde z(x)^T \tilde Q \tilde z(x)$, where
$$\tilde z(x) = (1, x_1, x_2, x_1^2, x_1x_2, x_2^2, x_1^3, x_1^2x_2, x_1x_2^2, x_2^3)^T,$$
and $\tilde Q$ is the following matrix:
$$\hspace{-1cm} 
\tilde Q = \scriptsize{\frac{1}{300}
\left(\begin{array}{cccccccccc} 6300 & 2700 & -2850 & -1329 & -1635 & -4037 & 90 & 1282 & 246 & -828\\ 2700 & 4758 & -1365 & -90 & -2123 & -1533 & -3565 & 287 & -69 & -448\\ -2850 & -1365 & 2374 & 841 & 1287 & 714 & -238 & -993 & -151 & 277\\ -1329 & -90 & 841 & 1130 & -49 & 228 & 0 & -1050 & -147 & 489\\ -1635 & -2123 & 1287 & -49 & 1668 & 479 & 1050 & 21 & -402 & 153\\ -4037 & -1533 & 714 & 228 & 479 & 4390 & 126 & -87 & -153 & 0\\ 90 & -3565 & -238 & 0 & 1050 & 126 & 5100 & 0 & -668 & 96\\ 1282 & 287 & -993 & -1050 & 21 & -87 & 0 & 1336 & -96 & -592\\ 246 & -69 & -151 & -147 & -402 & -153 & -668 & -96 & 1184 & 0\\ -828 & -448 & 277 & 489 & 153 & 0 & 96 & -592 & 0 & 864 \end{array}\right)}.
$$
We can check that $\tilde Q$ is positive definite\footnote{To verify positive definiteness of $\tilde Q$ and the equality $p(x) - \big (\ell_1(x), \: \ell_2(x) \big)^T \: \nabla p(x) = \tilde z(x)^T \tilde Q \tilde z(x)$, refer to: \\ \url{http://colab.research.google.com/github/cdibek/spq_polynomials/blob/main/Theorem_3_2_Proof.ipynb}.}. Thus, $p(x) - \big (\ell_1(x), \: \ell_2(x) \big)^T \: \nabla p(x)$ is sos. This proves \eqref{eq:p1_nonnegative}.

Now, by \eqref{eq:p1_coercive} and \eqref{eq:p1_nonnegative}, it follows that $p(x)$ is nonnegative.
\end{proof}

\medskip

\begin{proof}[Proof of Theorem \ref{thm:counter_example_3_4}]
Clearly, the polynomial $p(x)$ is SPQ. As in the proof of Theorem \ref{thm:counter_example_2_6}, we show that $p(x)$ is not sos via a separating hyperplane argument. We fix the following monomial ordering:
\begin{align*}
\hspace{-0.6cm}
v =
( & 1, x_1^2, x_1x_2, x_2^2, x_1x_3, x_2x_3, x_3^2, x_1^4, x_1^3x_2, x_1^2x_2^2, x_1x_2^3, x_2^4, x_1^3x_3, x_1^2x_2x_3, x_1x_2^2x_3, x_2^3x_3, x_1^2x_3^2, x_1x_2x_3^2, \\ & x_2^2x_3^2, x_1x_3^3, x_2x_3^3, x_3^4)^T.
\end{align*}
Let the vector of coefficients of $p(x)$ in the above monomial ordering be denoted by
\begin{align*}
\hspace{-0.6cm}
\overrightarrow{p} = 
(9/4 , 2 , 8 , 2 , 8 , 8 , 2 , 1 , 0 , 0 , 0 , 1 , 0 , 0 , 0 , 0 , 0 , 0 , 0 , 0 , 0 , 1)^T.
\end{align*}
One can verify\footnote{All the computations in this proof are carried out over rational numbers and can be verified via the following link: \url{http://colab.research.google.com/github/cdibek/spq_polynomials/blob/main/Theorem_3_3_Proof.ipynb}.} that the vector 
\begin{align*}
\hspace{-0.6cm}
\mu = (120 , 95 , -47 , 95 , -47 , -47 , 95 , 95 , -47 ,  78 , -47, 95 , -47 , -10 , -10 , -47 , 78 , -10 , 78 , -47 , -47 , 95)^T
\end{align*}
satisfies $\langle \mu, \overrightarrow{p} \rangle = -3 < 0$. We claim that for any sos polynomial $q(x)$ containing only the monomials in $v$, we should have $\langle \mu, \overrightarrow{q} \rangle \geq 0$, where $\overrightarrow{q}$ denotes the coefficients of $q(x)$ listed according to the ordering in $v$. Indeed, if $q(x)$ is sos, by Theorem \ref{thm:sos_psd}, it can be written as
$$q(x) = z(x)^TQz(x) = \text{Tr} \Big(Q \: z(x)z(x)^T \Big)$$
for some positive semidefinite matrix $Q$ and vector of monomials
$$z(x) = (x_1, x_2, x_3, 1, x_1^2, x_1x_2, x_2^2, x_1x_3, x_2x_3, x_3^2)^T.$$
It is easy to see that
$$\langle \mu, \overrightarrow{q} \rangle =  \text{Tr}\left (Q \: \big (z(x)z(x)^T \big)|_\mu \right),$$
where 
$$
\big (z(x)z(x)^T \big)|_\mu =\scriptsize{
\left(\begin{array}{cccccccccc}
95 & -47 & -47 & 0 & 0 & 0 & 0 & 0 & 0 & 0 \\
-47 & 95 & -47 & 0 & 0 & 0 & 0 & 0 & 0 & 0 \\
-47 & -47 & 95 & 0 & 0 & 0 & 0 & 0 & 0 & 0 \\
0 & 0 & 0 & 120 & 95 & -47 & 95 & -47 & -47 & 95 \\
0 & 0 & 0 & 95 & 95 & -47 & 78 & -47 & -10 & 78 \\
0 & 0 & 0 & -47 & -47 & 78 & -47 & -10 & -10 & -10 \\
0 & 0 & 0 & 95 & 78 & -47 & 95 & -10 & -47 & 78 \\
0 & 0 & 0 & -47 & -47 & -10 & -10 & 78 & -10 & -47 \\
0 & 0 & 0 & -47 & -10 & -10 & -47 & -10 & 78 & -47 \\
0 & 0 & 0 & 95 & 78 & -10 & 78 & -47 & -47 & 95
\end{array}\right)}
$$
is the matrix where each monomial in $z(x)z(x)^T$ is replaced with the corresponding element of the vector $\mu$ (or zero, if the monomial is not in $v$). We can check that the matrix $\big (z(x)z(x)^T \big)|_\mu$ is positive definite. Since $Q$ and $\big (z(x)z(x)^T \big)|_\mu$ are positive semidefinite, it follows that $\langle \mu, \overrightarrow{q} \rangle = \text{Tr} \left ( Q \: \big (z(x)z(x)^T \big)|_\mu \right ) \geq 0$. This completes the proof that $p(x)$ is not sos.

Next, we prove that $p(x)$ is nonnegative by showing that the polynomial
$(x_1^2 + x_2^2 + x_3^2) \: p(x)$ is sos. Indeed, $(x_1^2 + x_2^2 + x_3^2) \: p(x) = \tilde z(x)^T \tilde Q \tilde z(x)$, where
$$\tilde z(x) =
(x_1^2, x_1x_2, x_2^2, x_1x_3, x_2x_3, x_3^2, x_1, x_2, x_3, x_1^3, x_1^2x_2, x_1x_2^2, x_2^3, x_1^2x_3, x_1x_2x_3, x_2^2x_3, x_1x_3^2, x_2x_3^2, x_3^3)^T,
$$
and $\tilde Q$ is the following matrix:
$$
\hspace{-1.2cm}
\scriptsize{\frac{1}{336}
\left(\begin{array}{ccccccccccccccccccc}
826 & 861 & 105 & 861 & 336 & 105 & 0 & 0 & 0 & 0 & 0 & 0 & 0 & 0 & 0 & 0 & 0 & 0 & 0 \\
861 & 1638 & 861 & 1239 & 1239 & 336 & 0 & 0 & 0 & 0 & 0 & 0 & 0 & 0 & 0 & 0 & 0 & 0 & 0 \\
105 & 861 & 826 & 336 & 861 & 105 & 0 & 0 & 0 & 0 & 0 & 0 & 0 & 0 & 0 & 0 & 0 & 0 & 0 \\
861 & 1239 & 336 & 1638 & 1239 & 861 & 0 & 0 & 0 & 0 & 0 & 0 & 0 & 0 & 0 & 0 & 0 & 0 & 0 \\
336 & 1239 & 861 & 1239 & 1638 & 861 & 0 & 0 & 0 & 0 & 0 & 0 & 0 & 0 & 0 & 0 & 0 & 0 & 0 \\
105 & 336 & 105 & 861 & 861 & 826 & 0 & 0 & 0 & 0 & 0 & 0 & 0 & 0 & 0 & 0 & 0 & 0 & 0 \\
0 & 0 & 0 & 0 & 0 & 0 & 756 & 0 & 0 & -77 & 294 & -126 & 189 & 294 & -21 & -105 & -126 & -105 & 189 \\
0 & 0 & 0 & 0 & 0 & 0 & 0 & 756 & 0 & 189 & -126 & 294 & -77 & -105 & -21 & 294 & -105 & -126 & 189 \\
0 & 0 & 0 & 0 & 0 & 0 & 0 & 0 & 756 & 189 & -105 & -105 & 189 & -126 & -21 & -126 & 294 & 294 & -77 \\
0 & 0 & 0 & 0 & 0 & 0 & -77 & 189 & 189 & 336 & 0 & -63 & 0 & 0 & -126 & 63 & -63 & 63 & 0 \\
0 & 0 & 0 & 0 & 0 & 0 &294 & -126 & -105 & 0 & 462 & 0 & -63 & 126 & 84 & -147 & -147 & -75 & 63 \\
0 & 0 & 0 & 0 & 0 & 0 & -126 & 294 & -105 & -63 & 0 & 462 & 0 & -147 & 84 & 126 & -75 & -147 & 63 \\
0 & 0 & 0 & 0 & 0 & 0 & 189 & -77 & 189 & 0 & -63 & 0 & 336 & 63 & -126 & 0 & 63 & -63 & 0 \\
0 & 0 & 0 & 0 & 0 & 0 & 294 & -105 & -126 & 0 & 126 & -147 & 63 & 462 & 84 & -75 & 0 & -147 & -63 \\
0 & 0 & 0 & 0 & 0 & 0 & -21 & -21 & -21 & -126 & 84 & 84 & -126 & 84 & 450 & 84 & 84 & 84 & -126 \\
0 & 0 & 0 & 0 & 0 & 0 &-105 & 294 & -126 & 63 & -147 & 126 & 0 & -75 & 84 & 462 & -147 & 0 & -63 \\
0 & 0 & 0 & 0 & 0 & 0 & -126 & -105 & 294 & -63 & -147 & -75 & 63 & 0 & 84 & -147 & 462 & 126 & 0 \\
0 & 0 & 0 & 0 & 0 & 0 & -105 & -126 & 294 & 63 & -75 & -147 & -63 & -147 & 84 & 0 & 126 & 462 & 0 \\
0 & 0 & 0 & 0 & 0 & 0 &189 & 189 & -77 & 0 & 63 & 63 & 0 & -63 & -126 & -63 & 0 & 0 & 336
\end{array}\right)}.
$$
We can check that $\tilde Q$ is positive definite\footnote{To verify positive definiteness of $\tilde Q$ and the equality $(x_1^2 + x_2^2 + x_3^2) \: p(x) = \tilde z(x)^T \tilde Q \tilde z(x)$, refer to: \\ \url{http://colab.research.google.com/github/cdibek/spq_polynomials/blob/main/Theorem_3_3_Proof.ipynb}.}, and thus $(x_1^2 + x_2^2 + x_3^2) \: p(x)$ is sos.
\end{proof}


\subsection{Examples in higher degrees and dimensions}\label{subsec:higher_n_d}

In Theorems \ref{thm:counter_example_2_6} and \ref{thm:counter_example_3_4}, we presented nonnegative SPQ polynomials that are not sos for the two minimal cases $(n, d) = (2, 6)$ and $(n, d) = (3, 4)$. In this subsection, we show how to construct SPQ polynomials that are nonnegative but not sos in higher degrees and dimensions.

\begin{theorem}
\label{thm:higher_deg_dim}
For $n \geq 2$ and $d \geq 6$, and for $n \geq 3$ and $d \geq 4$, the set $N^{SPQ}_{n,d} \setminus \Sigma^{SPQ}_{n,d}$ is nonempty.
\end{theorem}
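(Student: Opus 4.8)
The plan is to prove the two minimal cases $(n,d)=(2,6)$ and $(n,d)=(3,4)$ — which are exactly Theorems~\ref{thm:counter_example_2_6} and \ref{thm:counter_example_3_4} — and then propagate them to all larger $(n,d)$ by two elementary operations, each of which turns a witness of $N^{SPQ}_{n,d}\setminus\Sigma^{SPQ}_{n,d}\neq\varnothing$ into a witness in a larger space. Operation (L1) (\emph{adding a variable}): given $p\in N^{SPQ}_{n,d}\setminus\Sigma^{SPQ}_{n,d}$, set $\hat p(x_1,\dots,x_{n+1})=p(x_1,\dots,x_n)+x_{n+1}^{\,d}$. Operation (L2) (\emph{raising the degree by two}): given such a $p$, relabel the variables so that the $x_1$-univariate part $s_1$ of $p$ has degree exactly $d$ (possible since, $q$ being quadratic, $\deg q\le 2<d$ forces $\max_i\deg s_i=d$), and set $\hat p(x)=p(x)+x_1^{\,d+2}$. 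We may restrict to $d$ even, since a nonzero nonnegative polynomial has even degree. Iterating (L1) and (L2) from $(2,6)$ reaches every $(n,d)$ with $n\ge 2$ and $d\ge 6$ even, and iterating them from $(3,4)$ reaches every $(n,d)$ with $n\ge 3$ and $d\ge 4$ even; the union of these two families is exactly the set in the statement, so it suffices to show that (L1) and (L2) preserve membership in $N^{SPQ}\setminus\Sigma^{SPQ}$.

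For (L1) this is routine. The polynomial $\hat p$ is SPQ (the new monomial goes into the separable part), has degree $d$, and is nonnegative because $x_{n+1}^{\,d}\ge 0$. If $\hat p$ were sos, substituting $x_{n+1}=0$ into a sum-of-squares representation — an operation that preserves the sos property — would display $p$ as sos, contradicting the choice of $p$.

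For (L2) the polynomial $\hat p$ is again SPQ, has degree $d+2$, and is nonnegative; the only nontrivial point is that $\hat p$ is not sos, and this is the heart of the argument. I would proceed by contradiction. Suppose $\hat p=\sum_j r_j^2$. The exponent vector $(d+2,0,\dots,0)$ is a vertex of the Newton polytope of $\hat p$ and is not an exponent of $p$, so the Newton-polytope restriction on sum-of-squares decompositions (as used in the footnotes of Theorem~\ref{thm:counter_example_2_6}; see \cite{Reznick,Reznick2}) forces $r_j=\alpha_j x_1^{(d+2)/2}+\tilde r_j$ with each $\tilde r_j$ of total degree at most $d/2$ and $\sum_j\alpha_j^2=1$. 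Expanding $\sum_j r_j^2$, subtracting $x_1^{d+2}$, and observing that the degree-$(d+1)$ part of the resulting identity must vanish (since $p$ has degree $d$) yields
\[
p \;=\; \sum_j \tilde r_j^{\,2} \;+\; 2\,x_1^{(d+2)/2}\,g,\qquad g:=\sum_j\alpha_j\tilde r_j,\qquad \deg g\le \tfrac{d}{2}-1 .
\]
In particular $g\neq 0$, for otherwise $p$ would be sos. To reach a contradiction I would combine this identity with the separating functional $\mu$ produced in the minimal case: extend $\mu$ from the monomials of the base example to those of $\hat p$ by assigning values to the finitely many new moments so that the extended moment matrix associated with $\hat p$ (via Theorem~\ref{thm:sos_psd}) remains positive semidefinite while $\langle\mu,\overrightarrow{x_1^{d+2}}\rangle$ stays below $-\langle\mu,\overrightarrow{p}\rangle$; the resulting functional then certifies $\langle\hat\mu,\overrightarrow{\hat p}\rangle<0$, so $\hat p\notin\Sigma^{SPQ}_{n,d+2}$.

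The main obstacle is precisely this extension step. What makes it feasible is that the moment matrices in Theorems~\ref{thm:counter_example_2_6} and \ref{thm:counter_example_3_4} are positive \emph{definite}, so one is extending a strictly feasible point and has slack to work with: writing the extended moment matrix in block form with the original positive definite matrix $M$ in the upper-left corner, its positive semidefiniteness reduces by a Schur complement to a small inequality involving $M^{-1}$ and the new moments, which one meets by taking the new diagonal moment large and the $x_1^{d+2}$-moment just small enough — a finite rational computation of the same nature as those already carried out for the minimal cases. Once (L1) and (L2) are established, the same two-operation scheme, with the dual-certificate argument of (L2) replaced by the trace inequality of Lemma~\ref{lem:counterexample_for_nonneg_sum}, yields the companion statement that $\Sigma^{SPQ}_{n,d}\setminus(N^S+N^Q)_{n,d}$ is nonempty for all $n\ge 2$, $d\ge 4$, which is the construction omitted after that lemma.
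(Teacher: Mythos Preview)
Your operation (L1) is correct and identical to the paper's variable-addition step. The issue is (L2). The paper does not attempt to show that $p+x_1^{\,d+2}$ (with coefficient~$1$) is not sos; instead it passes from degree~$6$ directly to an arbitrary even degree~$d\ge 8$ via $q_t=p+t\,x_1^{\,d}$ and invokes the \emph{closedness} of the cone $\Sigma_{2,d}$: since $q_0=p$ is not sos, $q_t$ cannot be sos for all $t>0$, so some $t>0$ furnishes the desired witness. This single topological observation replaces your entire dual-extension programme.

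Your extension argument, as stated, does not close. The quantity you need to keep small, $\hat\mu(x_1^{\,d+2})$, is precisely the diagonal entry of the extended moment matrix in the row indexed by $x_1^{(d+2)/2}$; the Schur complement you invoke therefore bounds it \emph{below} by $v^{T}M^{-1}v$, where several entries of $v$ (for the $(2,6)$ base case: $\mu(x_1^{4}),\mu(x_1^{5}),\mu(x_1^{6})$, all already fixed in the thousands) are not free parameters. There is no mechanism forcing this lower bound to fall under $-\langle\mu,\overrightarrow{p}\rangle=5$, and your sentence ``taking the new diagonal moment large and the $x_1^{d+2}$-moment just small enough'' treats as two independent knobs what is in fact one number. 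Adding the additional rows that the enlarged Newton polytope forces (e.g.\ for $x_1^{2}x_2$) does not help, since the principal-submatrix condition on $M$ together with the single $x_1^{(d+2)/2}$-row already imposes the same lower bound on $\hat\mu(x_1^{\,d+2})$. The argument becomes salvageable only if you replace the coefficient $1$ in front of $x_1^{\,d+2}$ by a sufficiently small $\epsilon>0$, so that the requirement becomes $\epsilon\,\hat\mu(x_1^{\,d+2})<-\langle\mu,\overrightarrow{p}\rangle$; but that is exactly the paper's closedness argument unwound on the dual side.
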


\begin{proof}
The fact that $N^{SPQ}_{2,6} \setminus \Sigma^{SPQ}_{2,6} \neq \emptyset$ follows from Theorem \ref{thm:counter_example_2_6}. We first show existence of polynomials in $N^{SPQ}_{n,d} \setminus \Sigma^{SPQ}_{n,d}$ for $n = 2$ and $d \geq 8$. Let $p(x_1, x_2)$ be a bivariate SPQ polynomial of degree 6 that is nonnegative but not sos (e.g., the polynomial given in Theorem \ref{thm:counter_example_2_6}). Let $d$ be an even integer greater than or equal to 8. Consider the polynomial
\vspace{-0.16cm}
\begin{equation*}
\hspace{5.6cm}
q_t(x) = p(x_1, x_2) + t x_1^{d},
\end{equation*}
where $t$ is a parameter. Clearly, $q_t(x) \in N^{SPQ}_{2, d}$, $\forall t \geq 0$, and $q_0(x) \in N^{SPQ}_{2,6} \setminus \Sigma^{SPQ}_{2,6}$ since $p(x_1, x_2)$ is not sos. We claim that the polynomial $q_t(x)$ is not sos for some $t > 0$. Suppose for the sake of contradiction that $q_t(x)$ was sos for every $t > 0$. Then, by the closedness of the cone $\Sigma_{2,d}$ (see \cite{Robinson}), the polynomial $q_0(x)$ must be sos, which is a contradiction.\footnote{To make the proof completely constructive, one can solve a semidefinite program that finds the smallest $\epsilon$ which makes $p(x_1, x_2) + \epsilon x_1^{d}$ sos, and then set $t = \epsilon / 2$. (If this program is infeasible, then set $t$ to any positive real number.)}

Next, we show that from any polynomial $p(x) \in N^{SPQ}_{n,d} \setminus \Sigma^{SPQ}_{n,d}$, one can easily construct a polynomial $\tilde p(x, x_{n+1}) \in N^{SPQ}_{n+1,d} \setminus \Sigma^{SPQ}_{n+1,d}$. This, together with the result of the previous paragraph and Theorem~\ref{thm:counter_example_3_4}, would complete the proof. Consider the polynomial
\vspace{-0.12cm}
\begin{equation*}
\hspace{5.4cm}
\tilde p(x, x_{n+1}) = p(x) + x_{n+1}^{d}.
\end{equation*}
Clearly, $\tilde p(x, x_{n+1}) \in N^{SPQ}_{n+1,d}$. Moreover, $\tilde p(x, x_{n+1})$ is not a sum of squares since a decomposition $\tilde p(x, x_{n+1}) = \sum_i q_i^2(x, x_{n+1})$ would imply that $p(x) = \tilde p(x, 0) = \sum q_i^2(x, 0)$ is a sum of squares, which is a contradiction.
\end{proof}

This completes our study of the relationship between nonnegative and sos SPQ polynomials for all combinations $(n,d)$ and hence the proof of Theorem \ref{thm:main_thm}.

\vspace{-0.2cm}
\subsection{Summary of the minimal examples}

Recall the inclusion relationships
\reqnomode
\begin{equation}
\hspace{2.15cm}
(N^S+N^{Q_f})_{n,d} \quad \subseteq \quad (N^S+N^Q)_{n,d} \quad \subseteq \quad \Sigma^{SPQ}_{n,d} \quad \subseteq \quad N^{SPQ}_{n,d},
\label{eq:minimal_ex}
\end{equation}
$\hspace{6.2cm} \big \downarrow \subset \hspace{2.9cm} \big\downarrow \subset \hspace{1.5cm} \big \downarrow \subset$

\vspace{0.2cm}

$\hspace{5.3cm} (2,2) \hspace{2.8cm} (2,4) \hspace{0.9cm} (2,6), (3,4)$

\vspace{0.2cm}

\noindent where the downward arrows point to minimal $(n, d)$ for which the inclusion is strict, as proven in Lemma \ref{lem:minimal_2_2}, Lemma \ref{lem:counterexample_for_nonneg_sum}, Theorem \ref{thm:counter_example_2_6}, and Theorem~\ref{thm:counter_example_3_4}, respectively. We summarize the examples showing the strictness of these inclusions in Table \ref{tab:minimal_ex}.

\begin{table}[h]
\centering
\begin{tabular}{|| m{11.8cm} | m{4.8cm} ||} 
 \hline
$p(x)$ &  \vspace{0.1cm} $p(x)$ belongs to  \\ [0.3ex] 
 \hline\hline
$p(x) = 8x_1^2 - 4x_1 + 2x_2^2 - 2x_2 + 3 + 8x_1x_2$ & \vspace{0.11cm} $(N^S+N^Q)_{2,2} \setminus (N^S+N^{Q_f})_{2,2}$  \\  [1.3ex] 
 \hline
$p(x) = x_1^4 - x_1^2 + 2x_1 + x_2^4 - x_2^2 - 2x_2 + \frac{12}{5} - 2x_1x_2$ & \vspace{0.11cm} $\Sigma^{SPQ}_{2, 4} \setminus (N^S+N^Q)_{2, 4}$  \\ [1.3ex] 
 \hline
$p(x) = 17x_1^6 - 20x_1^4 + 7x_1^2 + 18x_1 + 18x_2^4 - 19x_2^2 - 19x_2 + 21 - 20x_1x_2$ & \vspace{0.11cm} $N^{SPQ}_{2,6} \setminus \Sigma^{SPQ}_{2,6}$ \\ [1.3ex] 
 \hline
$p(x) = x_1^4 + 2x_1^2 + x_2^4 + 2x_2^2 + x_3^4 + 2x_3^2 + \frac{9}{4} + 8x_1x_2 + 8x_1x_3 + 8x_2x_3$ & \vspace{0.11cm} $N^{SPQ}_{3,4} \setminus \Sigma^{SPQ}_{3,4}$ \\ [1.3ex] 
 \hline
\end{tabular}
\vspace{-0.2cm}
\caption{Minimal examples showing the strictness of the inclusions in \eqref{eq:minimal_ex}}
\label{tab:minimal_ex}
\end{table}

\section{Complexity of Deciding Nonnegativity of SPQ Polynomials} \label{sec:complexity}

Theorem \ref{thm:main_thm} indicates that nonnegativity of an SPQ polynomial cannot always be established by a decomposition as a sum of squares of polynomials. One might still wonder whether there exists a characterization of nonnegative SPQ polynomials which can be checked in polynomial time. In this section, we show that, unless P$=$NP, the answer to this question is negative. Our hardness result holds already for SPQ polynomials of degree 4.

\begin{theorem}
The following decision problem is NP-hard:
\normalfont
\decision{\textsc{SPQ-Nonnegativity}}
{Rational coefficients of a quartic SPQ polynomial $p(x)$ given in a fixed monomial ordering.}
{Decide if $p(x) \geq 0$ for all $x \in \R^n$.}
\label{thm:NP_hard_nonneg}
\end{theorem}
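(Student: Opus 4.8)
The plan is to reduce from a known NP-hard problem about quartic (or quadratic) forms. The natural candidate is the NP-hardness of testing matrix copositivity (equivalently, deciding whether $x^T M x \geq 0$ on the nonnegative orthant for a symmetric matrix $M$), which is the classical route used by Murty and Kabadi \cite{MurtKaba} to establish NP-hardness of nonnegativity of general quartic polynomials. Given an instance $M \in \mathbb{R}^{n \times n}$ of copositivity, one forms the quartic polynomial $q(y) = \sum_{i,j} M_{ij} y_i^2 y_j^2$, which is nonnegative on all of $\mathbb{R}^n$ if and only if $M$ is copositive. The catch is that this $q$ is \emph{not} SPQ: it has quartic cross-terms $y_i^2 y_j^2$. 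So the heart of the argument is to re-encode $q$ as an SPQ polynomial in more variables at the cost of only polynomially many extra variables and constraints — but since the problem asks for unconstrained nonnegativity, the ``constraints'' must themselves be folded in via a penalty/lifting trick that stays within the SPQ class.

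The key idea I would pursue is a lifting that introduces a new variable $z_{ij}$ intended to represent the product $y_i^2 y_j^2$ (or perhaps $y_i y_j$, with sign handled separately), and then writes down an SPQ polynomial whose nonnegativity forces $z_{ij}$ to equal that product at any would-be counterexample. Concretely: the cross-term $y_i^2 y_j^2$ can be replaced by the \emph{bilinear} term $y_i^2 z_j$ if we introduce $z_j = y_j^2$; but $y_i^2 z_j$ is still not quadratic. The cleaner route is to note that $M_{ij} y_i^2 y_j^2 = M_{ij} (u_{ij})^2$ where $u_{ij} = y_i y_j$, and then to enforce $u_{ij} = y_i y_j$ by adding terms of the form $\lambda\big((u_{ij} - y_i y_j)^2\big)$ — but $(u_{ij}-y_iy_j)^2$ contains the quartic monomial $y_i^2 y_j^2$ again, so this is circular. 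I would instead enforce the coupling one layer at a time: introduce $w_i$ meant to equal $y_i^2$, penalize with a \emph{separable} term that pushes $w_i$ toward $y_i^2$ along the relevant fibers, and use the fact that in an SPQ polynomial we are allowed arbitrary univariate pieces. The first separable-plus-univariate result (Theorem \ref{thm:nonneg_uni_plus_quadratic}) and the structure of the minimal examples suggest the right gadget is a term like $(y_i^2 - t)^2$ handled as a univariate polynomial in an auxiliary single variable together with a quadratic coupling $-2 y_i^2 t$ — but $y_i^2 t$ is not quadratic either, so ultimately the bilinear couplings must be between an original variable squared (which is degree 2, fine as the quadratic part only if one of the two factors is a constant-degree monomial)...

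Let me restate the realistic plan. The cleanest known trick for staying inside a restricted polynomial class while reducing from copositivity is: reduce from the \textsc{Partition}-type or \textsc{Stable Set}-type NP-hard problem that underlies copositivity, and build an SPQ polynomial directly. Given a graph $G$ on $n$ vertices, the Motzkin--Straus formulation says $1/\omega(G) = \min\{ x^T(I+A_G)x : x \geq 0, \sum x_i = 1\}$. I would: (1) take the quartic form $\sum_{i} x_i^4 + 2\sum_{(i,j)\in E} x_i^2 x_j^2 + (\text{terms forcing }\sum x_i^2 \approx \text{const})$, recognize its quartic cross terms $x_i^2 x_j^2$ are the obstruction, and (2) \emph{homogenize/substitute} $x_i^2 = v_i$ only for the variables appearing in cross terms, so that $x_i^2 x_j^2 = v_i v_j$ becomes a genuine quadratic term in the new variables $v_i$, while adding separable penalties $\sum_i (x_i^2 - v_i)^2$ — wait, that reintroduces $x_i^2 v_i$. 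The resolution is that $(x_i^2 - v_i)^2 = x_i^4 - 2x_i^2 v_i + v_i^2$; the term $x_i^2 v_i$ is a cross term but it is \emph{not quadratic}, so this still fails. Hence the only way this works is if the penalty couples $v_i$ to $x_i^2$ through terms where one factor is linear: i.e. enforce $v_i = x_i^2$ is impossible by SPQ penalties alone.

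Given these obstructions, the approach I actually expect the authors take — and would take — is a more clever reduction that avoids re-encoding products entirely, perhaps reducing from a problem whose certificate is already ``separable plus quadratic'' in nature: e.g., deciding whether a quadratic form is nonnegative \emph{on a box or on the union of coordinate lines}, or a constrained quadratic problem where the constraints $x_i \in \{0,1\}$ or $x_i^2 \le 1$ are themselves separable quartics $x_i^2(x_i^2-1)\le 0$ or $x_i^2(1-x_i)^2$, added as penalties $\lambda \sum_i x_i^2(x_i-1)^2$ (a separable quartic!) to a quadratic objective $q(x) = x^T M x + (\text{linear})$. This sum $\lambda\sum_i x_i^2(x_i-1)^2 + x^T M x$ \emph{is} SPQ. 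So the plan is:

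\textbf{Step 1.} Fix an NP-hard problem of the form: given a symmetric integer matrix $M$ and an integer $k$, decide whether $\min_{x \in \{0,1\}^n} x^T M x \ge -k$ — a standard NP-hard quadratic pseudo-Boolean optimization problem (e.g. by reduction from \textsc{Max-Cut}).

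\textbf{Step 2.} Build the SPQ polynomial $p_\lambda(x) = x^T M x + k + \tfrac12 + \lambda \sum_{i=1}^n x_i^2(x_i-1)^2$, whose quartic part is separable and whose remaining part is quadratic. Prove that for $\lambda$ large enough but still of polynomial bit-size, $p_\lambda(x) \ge 0$ for all $x \in \R^n$ if and only if $\min_{x\in\{0,1\}^n} x^T M x \ge -k$. The ``only if'' direction is immediate by evaluating at $0/1$ points. The ``if'' direction requires a quantitative argument: if $p_\lambda$ dips below zero somewhere, the minimizer (which exists since $p_\lambda$ is coercive once $\lambda>0$) must be near a $0/1$ point because the penalty grows, and a Lipschitz/rounding estimate then contradicts $\min_{x\in\{0,1\}^n} x^T M x \ge -k$; one must check the slack $\tfrac12$ absorbs the rounding error for a suitable polynomial choice of $\lambda$.

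\textbf{Step 3.} Conclude NP-hardness of \textsc{SPQ-Nonnegativity} for quartics, and note membership/size issues (the polynomial has polynomial bit-size in the input).

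The main obstacle is \textbf{Step 2, the ``if'' direction}: making the penalty argument quantitative with a polynomially-bounded $\lambda$. One must bound how far a near-zero of $p_\lambda$ can be from the Boolean cube, control the growth of the quadratic part $x^T M x$ in that neighborhood, and round to a genuine $0/1$ vector while tracking the change in objective value; getting all constants to fit under the additive slack with $\lambda = \mathrm{poly}(n, \|M\|, k)$ is the delicate part. A secondary subtlety is ensuring the reduction's source problem is genuinely NP-hard with the right form of threshold; this is routine via \textsc{Max-Cut} or \textsc{Partition} but must be stated carefully.
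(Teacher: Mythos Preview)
Your final plan is correct and is essentially the paper's approach: the paper reduces from \textsc{Partition} and takes
\[
p(x) \;=\; N\sum_{i=1}^n (x_i^2-1)^2 \;+\; \Big(\sum_{i=1}^n a_i x_i\Big)^2 \;-\; \tfrac12,
\]
i.e.\ a separable quartic penalty forcing $x_i\approx\pm 1$, a nonnegative quadratic encoding the combinatorial objective, and an additive slack, with a polynomially-bounded $N$. The bulk of the paper's proof is exactly the quantitative ``if'' direction you flag in Step~2: using first- and second-order optimality conditions at a global minimizer (coercivity gives existence) to show each $x_i^*$ is within $O\big(1/(\sum_j a_j)^2\big)$ of $\pm 1$, then rounding and using that the integer gap is at least~$1$ to absorb the error under the slack~$\tfrac12$; your sketch of this argument is right, and your choice of source problem (quadratic pseudo-Boolean via \textsc{Max-Cut}) with penalty $x_i^2(x_i-1)^2$ would work by the same analysis, though the paper's choice of \textsc{Partition} makes the quadratic part a single square $(\sum a_i x_i)^2\ge 0$, which slightly streamlines the estimates.
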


\begin{proof}
We prove the claim by giving a polynomial-time reduction from the \textsc{Partition} problem, which is known to be NP-hard~\cite{GarJohn}. Recall that in \textsc{Partition}, we are given a set of positive integers and asked whether it is possible to split them into two subsets with equal sums:
\decision{\textsc{Partition}}
{A set of positive integers $\{a_1, \dots, a_n\}$.}
{Decide if there exists a partition of $\{1,2,\dots,n\}$ into two subsets $I$ and $I^C$ such that $\sum\limits_{i \in I} a_i= \sum\limits_{j \in I^C} a_j$.}
We say that a \textsc{Partition} instance is \emph{feasible} (resp. \emph{infeasible}) if the answer to the \textsc{Partition} question is yes (resp. no). Consider a given \textsc{Partition} instance $\{a_1, \dots, a_n\}$. Let 
$N = 18 \Big ( \max\limits_i a_i \Big ) \Big ( \sum\limits_{i} a_i \Big )^3$, and $c = \frac{1}{2}$. Define the quartic SPQ polynomial
$$p(x) = N \sum\limits_{i=1}^{n} (x_i^2 - 1)^2 + \Big ( \sum\limits_{i=1}^{n} a_ix_i\Big )^2 - c.$$
We show that the \textsc{Partition} instance $\{a_1, \dots, a_n\}$ is infeasible if and only if $p(x)$ is nonnegative. This would complete the proof.

Assume first that $\{a_1, \dots, a_n\}$ is feasible. Let $x_i^* = 1$ if $i \in I$, and $x_i^* = -1$ if $i \in I^C$. Then, $\sum_{i=1}^{n} a_ix_i^* = 0$ since $\{a_1, \dots, a_n\}$ is feasible, and so $p(x^*) = - c < 0$. Therefore, $p(x)$ is not nonnegative.

\reqnomode
Assume now that $\{a_1, \dots, a_n\}$ is infeasible. To show that $p(x)$ is nonnegative, we prove that if $x^*$ is a global minimum of $p(x)$, then $p(x^*) \geq 0$. Notice that since the top homogeneous component of $p(x)$ equals $N \sum_{i=1}^{n} x_i^4$, the polynomial $p(x)$ is coercive, and so it achieves its infimum. We will make use of the following partial derivatives of $p(x)$:
$$\frac{\partial p}{\partial x_i} (x) = 4N (x_i^2 -1)x_i + 2 \Big (\sum\limits_{j=1}^{n} a_jx_j \Big ) a_i \quad \text{for} \quad i=1, \dots, n,$$
$$\frac{\partial^2 p}{\partial x_i^2} (x) = 4N (3x_i^2 - 1) + 2 a_i^2 \quad \text{for} \quad i=1, \dots, n.$$
We see from the second-order necessary condition for optimality that $x = 0$ is not a local minimum of $p(x)$ since, e.g., $\frac{\partial^2 p}{\partial x_1^2} (0) = -4N + 2 a_1^2 < 0$.
Let $x^* \neq 0$ be a global minimum of $p(x)$. The first-order necessary condition for optimality implies that $\nabla p(x^*) = 0$, i.e.,
$$
4N (x_i^{*^2} -1)x_i^* = - 2(\sum\limits_{j=1}^{n} a_jx_j^*) a_i \:\:\: \text{for} \:\:\: i=1,\dots,n.
$$
Taking the absolute value of both sides, we obtain
\begin{equation}
\hspace{3.7cm}
2N |x_i^{*^2} -1| |x_i^*| = \Big | \sum\limits_{j=1}^{n} a_jx_j^* \Big | a_i \:\:\: \text{for} \:\:\: i=1,\dots,n.
\label{eq:first_order}
\end{equation}
Next, we prove three claims about the entries of $x^*$, with the third claim enabling us to show that $p(x^*) \geq 0$.

\vspace{-0.3cm}

\leqnomode
\begin{equation}
\longbox{{\it For $i=1,\dots, n$, we have $\frac{1}{6} \leq |x_i^*| \leq \frac{3}{2}$.}}
\label{eq:sandwich}\tag{$4.1.1$}
\end{equation}
Let $i_+ \in \argmax\limits_{i \in \{1,\dots,n\}} |x_i^*|$. As $x^* \neq 0$, $|x_{i_+}^*| > 0$. From \eqref{eq:first_order}, we have
$$2N |x_{i_+}^{*^2} -1| |x_{i_+}^*| = a_{i_+}  \Big | \sum\limits_{j=1}^{n} a_jx_j^* \Big | \leq a_{i_+}  \sum\limits_{j=1}^{n} a_j |x_j^*| \leq a_{i_+}  \sum\limits_{j=1}^{n} a_j |x_{i_+}^*|.$$
Hence, $2N |x_{i_+}^{*^2} -1| \leq a_{i_+} \sum\limits_{j=1}^{n} a_j$, and so $$|x_{i_+}^{*^2} -1| \leq \frac{a_{i_+} \sum\limits_{j=1}^{n} a_j}{2N} \leq \frac{1}{2}.$$ 
It follows that $x_{i_+}^{*^2} \leq \frac{3}{2}$, and thus $x_{i}^{*^2} \leq \frac{3}{2}$ for $i = 1, \dots, n$. To obtain the desired upper bound on $|x_i^*|$, observe that $|x_i^*| \leq \sqrt{\frac{3}{2}} \leq \frac{3}{2}$.

For the lower bound, we make use of the second-order necessary condition for optimality, which implies that
$$\frac{\partial^2 p}{\partial x_i^2} (x^*) = 4N (3x_i^{*^2} - 1) + 2 a_i^2 \geq 0 \quad \text{for} \quad i = 1, \dots, n.$$
This yields $x_i^{*^2} \geq \frac{1}{3} - \frac{a_i^2}{6N} \geq \frac{1}{6}$, where the second inequality follows since $\frac{a_i^2}{6N} \leq \frac{1}{6}$. Thus, $x_i^{*^2} \geq \frac{1}{6}$ for $i = 1, \dots, n$, and $|x_i^*| \geq \sqrt{\frac{1}{6}} \geq \frac{1}{6}$.

\vspace{-0.3cm}

\begin{equation}
\mbox{{\it For $i=1,\dots, n$, we have $|x_i^{*^2} - 1| \leq \frac{1}{4} \: \frac{1}{(\sum\limits_j a_j )^2}$.}}
\label{eq:upper_bound}\tag{$4.1.2$}
\end{equation}
Observe that
$$
\frac{1}{6} 2N |x_i^{*^2} -1| \leq 2N |x_i^{*^2} -1| |x_i^*| = a_i \Big | \sum\limits_{j} a_jx_j^* \Big | \leq a_i \sum\limits_{j} a_j |x_j^*| \leq \frac{3}{2} a_i \sum\limits_{j} a_j,
$$
where the first and the last inequalities follow from \eqref{eq:sandwich} and the equality follows from \eqref{eq:first_order}. Hence, for $i=1,\dots, n$, we obtain
$$|x_i^{*^2} - 1| \leq \frac{9}{2} \:\: \frac{a_i \sum\limits_{j} a_j}{N} = \frac{9}{2} \:\: \frac{a_i \sum\limits_{j} a_j}{18 \big ( \max\limits_j a_j \big ) \big ( \sum\limits_{j} a_j \big )^3} \leq \frac{1}{4} \: \frac{1}{(\sum\limits_j a_j)^2}.$$

\vspace{-0.3cm}

\newpage
\begin{equation}
\mbox{{\it For $i=1,\dots,n$, $x_i^* = \sgn(x_i^*) + \epsilon_i$, where $|\epsilon_i| \leq \frac{1}{4} \: \frac{1}{(\sum\limits_i a_i)^2}$ and $\sgn(\cdot)$ is the sign function.}}
\label{eq:sgn_description}\tag{$4.1.3$}
\end{equation}
For ease of notation, let $m = \frac{1}{4} \: \frac{1}{(\sum\limits_j a_j)^2}$. From \eqref{eq:upper_bound}, we have $1 - m \leq x_i^{*^2} \leq 1 + m$ for $i=1,\dots,n$. Since $0 < 1 - m < 1$ and $1 < 1 + m < 2$, it follows that
$$1 - m \leq \sqrt{1 - m} \leq |x_i^*| \leq \sqrt{1 + m} \leq 1 + m,$$
and thus $1 - m \leq |x_i^*| \leq 1 + m$.
If $\sgn(x_i^*) > 0$, then
$$1 - m \leq |x_i^*| \leq 1 + m \implies \sgn(x_i^*) - m \leq x_i^* \leq \sgn(x_i^*) + m,$$
and if $\sgn(x_i^*) < 0$, then
$$1 - m \leq |x_i^*| \leq 1 + m \implies \sgn(x_i^*) + m \geq x_i^* \geq \sgn(x_i^*) - m.$$
Thus, in both cases, we obtain $-m \leq x_i^* - \sgn(x_i^*) \leq m$, that is, $|x_i^* - \sgn(x_i^*)| \leq m$. Finally, letting $\epsilon_i = x_i^* - \sgn(x_i^*)$, it follows that $x_i^* = \sgn(x_i^*) + \epsilon_i$ where $|\epsilon_i| \leq \frac{1}{4} \: \frac{1}{(\sum\limits_i a_i)^2}$. This proves \eqref{eq:sgn_description}.

\medskip

\noindent We finish the proof by showing that $p(x^*) \geq 0$. Define
$$\text{min gap} = \min_{I \subseteq \{1, \dots, n\}} \Bigg | \sum\limits_{i \in I} a_i - \sum\limits_{j \in I^C} a_j \Bigg |.$$
We have
\reqnomode
\begin{align}
\hspace{3cm}
p(x^*) & = N \sum\limits_{i=1}^{n} (x_i^{*^2} - 1)^2 + \Big ( \sum\limits_{i=1}^{n} a_ix_i^*\Big )^2 - c \nonumber \\
& \geq \Big ( \sum\limits_{i=1}^{n} a_ix_i^*\Big )^2 - c \label{eq:a}\\
& = \Big ( \sum\limits_{i=1}^{n} a_i \sgn(x_i^*) + \sum\limits_{i=1}^{n} a_i \epsilon_i \Big )^2 - c \label{eq:b}\\
& \geq \Big ( \sum\limits_{i=1}^{n} a_i \sgn(x_i^*) \Big )^2 - 2 \Big |\sum\limits_{i=1}^{n} a_i \sgn(x_i^*) \Big | \Big |\sum\limits_{i=1}^{n} a_i \epsilon_i \Big | - c \label{eq:c}\\
& \geq ( \text{min gap} )^2 - 2 \Big |\sum\limits_{i=1}^{n} a_i \sgn(x_i^*) \Big | \Big |\sum\limits_{i=1}^{n} a_i \epsilon_i \Big | - c \label{eq:d},
\end{align}
where \eqref{eq:a} follows since $N \sum_{i=1}^{n} (x_i^{*^2} - 1)^2 \geq 0$, the equality \eqref{eq:b} follows by \eqref{eq:sgn_description}, the inequality \eqref{eq:c} follows since $(a + b)^2 \geq a^2 - 2 |a||b|$, and the inequality \eqref{eq:d} follows since $\Big ( \sum\limits_{i=1}^{n} a_i \sgn(x_i^*) \Big )^2 \geq ( \text{min gap} )^2$.

\medskip

\noindent Now, since $\Big |\sum\limits_{i} a_i \sgn(x_i^*) \Big | \leq \sum\limits_{i} a_i$ and 
$\Big |\sum\limits_{i} a_i \epsilon_i \Big | \leq \sum\limits_{i} a_i |\epsilon_i| \leq \frac{1}{4} \: \frac{1}{(\sum\limits_i a_i)^2} \sum\limits_{i} a_i = \frac{1}{4} \: \frac{1}{\sum\limits_i a_i}$, we have
$$\Big |\sum\limits_{i} a_i \sgn(x_i^*) \Big | \: \Big |\sum\limits_{i} a_i \epsilon_i \Big | \leq \frac{1}{4}.$$
Moreover, since $a_i$'s are integers and $\{a_1, \dots, a_n\}$ is infeasible, we have min gap $\geq 1$. Hence, we obtain
$$p(x^*) \geq ( \text{min gap} )^2 - 2 \Big |\sum\limits_{i=1}^{n} a_i \sgn(x_i^*) \Big | \Big |\sum\limits_{i=1}^{n} a_i \epsilon_i \Big | - c \geq 1 - 2 \: \frac{1}{4} - \frac{1}{2} = 0.\qedhere$$
\end{proof}

\section{Convex SPQ Polynomials}
\label{sec:convexity}
\reqnomode

In this section, we study the problems of checking convexity of SPQ polynomials (Section \ref{subsec:check_convexity}), checking nonnegativity of convex SPQ polynomials (Section \ref{subsec:cone_nonneg_sum}), and solving polynomial optimization problems where the objective and the constraints are given by convex SPQ polynomials (Section \ref{subsec:spq_optimization}).

We start by recalling some definitions. We refer to a matrix with polynomial entries as a polynomial matrix. A symmetric $m \times m$ polynomial matrix $P(x)$ in $n$ variables is said to be positive semidefinite, denoted by $P(x) \succeq 0$, if $P(x)$ is positive semidefinite for all $x \in \mathbb{R}^n$. It is straightforward to see that this condition holds if and only if the (scalar) polynomial $y^T P(x)y$ in $m+n$ variables $(x, y)$ is nonnegative. A polynomial matrix $P(x)$ is said to be an sos-matrix if $P(x) = A(x)^TA(x)$ for some polynomial matrix $A(x)$; equivalently, if the polynomial $y^T P(x)y$ in the variables $(x, y)$ is sos. Obviously, if $P(x)$ is an sos-matrix, then $P(x)$ is positive semidefinite.

We recall that a polynomial $p(x)$ is convex if and only if its Hessian $H(x)$, i.e., the $n \times n$ symmetric matrix of its second derivatives, is positive semidefinite. It is known that the problem of deciding convexity of polynomials is strongly NP-hard already for polynomials of degree four~\cite{AOPT}. Helton and Nie~\cite{HeltonNie} proposed the notion of \emph{sos-convexity} as a tractable algebraic certificate for convexity of polynomials. We say that a polynomial $p(x)$ is sos-convex if its Hessian $H(x)$ is an sos-matrix. Sos-convexity is obviously a sufficient condition for convexity. Moreover, as a consequence of Theorem \ref{thm:sos_psd}, the problem of deciding if a given polynomial is sos-convex amounts to solving a single semidefinite program.

\subsection{Deciding convexity of SPQ polynomials}\label{subsec:check_convexity}

It is known that polynomials of degree 4 or larger can be convex without being sos-convex (see, e.g.,~\cite{AhmPar1, AhmPar2}). An implication of our next theorem is that this cannot happen for SPQ polynomials, i.e., every convex SPQ polynomial is sos-convex. Hence, deciding convexity of an SPQ polynomial can be reduced to solving a semidefinite program of tractable size. In fact, the proof of the theorem below shows that convexity of an $n$-variate SPQ polynomial can be decided by simply finding the roots of $n$ univariate polynomials and checking whether a constant matrix is positive semidefinite. This is in contrast to the NP-hardness result (Theorem \ref{thm:NP_hard_nonneg}) for deciding nonnegativity of an SPQ polynomial.

\begin{theorem}
The Hessian of a convex SPQ polynomial can be written as the sum of positive semidefinite univariate polynomial matrices. In particular, an SPQ polynomial is convex if and only if it is sos-convex.
\label{thm:convex_spq_sos_convex}
\end{theorem}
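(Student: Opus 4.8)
The plan is to exploit the very rigid form of the Hessian of an SPQ polynomial. Writing $p(x) = \sum_{i=1}^n s_i(x_i) + q(x)$ with $q$ quadratic, its Hessian is $H(x) = D(x) + Q$, where $D(x) = \mathrm{diag}\big(s_1''(x_1), \dots, s_n''(x_n)\big)$ is diagonal with each entry a univariate polynomial in a distinct variable, and $Q = \nabla^2 q$ is a constant symmetric matrix. The key structural point is that the summands $s_i''$ have disjoint variables, so the $n$ univariate polynomials $s_i''$ can be minimized completely independently of one another.

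First I would record that convexity of $p$ forces each $s_i''$ to be bounded below on $\R$: since $H_{ii}(x) = s_i''(x_i) + Q_{ii} \ge 0$ for all $x$, we get $s_i''(x_i) \ge -Q_{ii}$ for all $x_i$, and a univariate polynomial bounded below attains its infimum, so $\mu_i := \min_{x_i \in \R} s_i''(x_i)$ is well defined. Now pick, for each $i$ separately, a minimizer $x_i^*$ of $s_i''$, and set $x^* = (x_1^*,\dots,x_n^*)$; this assembly is legitimate precisely because the variables are decoupled in $D$. Then $H(x^*) = \mathrm{diag}(\mu_1,\dots,\mu_n) + Q =: \hat{Q}$, and positive semidefiniteness of $H(x^*)$ (from convexity) gives $\hat{Q} \succeq 0$.

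Next I would write $s_i''(x_i) = \mu_i + \tilde{s}_i(x_i)$ with $\tilde{s}_i \ge 0$ on $\R$, so that
\[
H(x) = \sum_{i=1}^n \tilde{s}_i(x_i)\, e_i e_i^T + \hat{Q},
\]
exhibiting $H(x)$ as a sum of $n+1$ univariate polynomial matrices (with $\hat{Q}$ constant), each positive semidefinite: $\tilde{s}_i(x_i)\,e_ie_i^T \succeq 0$ because $\tilde{s}_i \ge 0$, and $\hat{Q} \succeq 0$ by the previous step. This proves the first assertion. For the second, the ``if'' direction is immediate. For ``only if'', note that by Hilbert's theorem (Theorem~\ref{thm:Hilbert}, case $n=1$) each nonnegative $\tilde{s}_i$ is sos, so $y^T\big(\tilde{s}_i(x_i)e_ie_i^T\big)y = \tilde{s}_i(x_i)\,y_i^2$ is sos, i.e.\ each $\tilde{s}_i(x_i)e_ie_i^T$ is an sos-matrix; and $\hat{Q} = R^TR$ for a constant matrix $R$ is trivially an sos-matrix. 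Since sums of sos polynomials are sos, $H(x)$ is an sos-matrix, i.e.\ $p$ is sos-convex.

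There is no genuinely hard step; the one point requiring care is the simultaneous-minimization argument yielding $\hat{Q} \succeq 0$, which is exactly where separability is used — were the univariate summands to share variables, one could not in general find a single point minimizing all diagonal entries at once, and the conclusion would fail (consistent with the counterexamples elsewhere in the paper). I would also note the byproduct for Section~\ref{subsec:check_convexity}: $p$ is convex if and only if each $s_i''$ is bounded below (checkable by locating the critical points of the univariate polynomial $s_i''$) and the resulting constant matrix $\hat{Q}$ is positive semidefinite, which needs no semidefinite program beyond a single constant PSD check.
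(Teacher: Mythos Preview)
Your proof is correct and follows essentially the same approach as the paper: both identify that $H(x)$ splits as a constant matrix plus a diagonal matrix of univariate polynomials, minimize each diagonal entry independently to obtain a positive semidefinite constant matrix $\hat{Q}$ (the paper's $C+D$), and then express $H(x)$ as $\hat{Q}$ plus rank-one diagonal pieces $\tilde{s}_i(x_i)e_ie_i^T$ with $\tilde{s}_i\ge 0$. The only cosmetic difference is that the paper first normalizes by pushing the diagonal of $Q$ into the separable part (so that each $u_i''$ is itself nonnegative), whereas you argue directly that $s_i''\ge -Q_{ii}$ is bounded below; the resulting decomposition and the sos-convexity conclusion are identical.
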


\begin{proof}
Let $p(x) = q(x) + \sum_{i=1}^n u_i(x_i)$ be a convex SPQ polynomial, where $q(x)$ is a quadratic polynomial and $u_i(x_i)$, for $i=1,\dots,n$, is a univariate polynomial in $x_i$. Observe that the Hessian $H(x)$ of $p(x)$ is given as $$H(x) = C + U(x),$$
where $C$ is a constant matrix (which corresponds to the Hessian of $q(x)$) and $U(x)$ is an $n \times n$ diagonal matrix with the vector
$\left (u_1''(x_1), \dots, u_n''(x_n) \right)^T$
on its diagonal. We may assume that the matrix $C$ has zero diagonal since, for $i=1,\dots,n$, we may push, if necessary, the $x_i^2$ term of $q(x)$ into $u_i(x_i)$.

Since $p(x)$ is convex, $H(x) \succeq 0$. Hence, for $i=1, \dots, n$, the univariate polynomial $u_i''(x_i)$ is nonnegative. Let $x_i^* \in \R$ be a global minimum of $u_i''(x_i)$ and let $D$ be the $n \times n$ diagonal matrix with the vector $(u_1''(x_1^*), \dots, u_n''(x_n^*))^T$ on its diagonal. Since $H(x_1^*, \dots, x_n^*) \succeq 0$, it follows that $C+D \succeq 0$. Moreover, $U(x) - D$ is a diagonal matrix with nonnegative diagonal entries $u_i''(x_i) - u_i''(x_i^*)$, and therefore positive semidefinite. Thus, $H(x)$ can be written as
$$H(x) = H^0 + \sum_{i=1}^n H^i(x_i),$$
where $H^0 = C+D$ and $H^i(x_i)$ is an $n \times n$ matrix with $H_{ii}^i(x_i) = u_i''(x_i) - u_i''(x_i^*)$ and zero everywhere else. Since $H^0$ and $H^i(x_i)$, $i=1,\dots,n$, are positive semidefinite, this shows that $H(x)$ can be written as the sum of positive semidefinite univariate polynomial matrices. To see that $p(x)$ is sos-convex, observe that the scalar polynomial $y^TH(x)y = y^TH^0y + \sum_{i=1}^n y_i^2 (u_i''(x_i) - u_i''(x_i^*))$ is sos, and therefore $H(x)$ is an sos-matrix.
\end{proof}

\subsection{Nonnegativity of convex SPQ polynomials}\label{subsec:cone_nonneg_sum}

It is known that there are nonnegative convex polynomials that are not sos \cite{Blekherman2}, though the construction of an explicit example remains an open problem. In \cite{HeltonNie}, it is shown that a nonnegative sos-convex polynomial is sos. This result, combined with Theorem \ref{thm:convex_spq_sos_convex}, immediately gives the following corollary.

\begin{corollary}
A nonnegative convex SPQ polynomial is sos.
\label{cor:convex_nonneg_sos}
\end{corollary}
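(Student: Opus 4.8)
The plan is to obtain the statement as an immediate consequence of two facts already in hand: Theorem~\ref{thm:convex_spq_sos_convex}, which asserts that convexity and sos-convexity coincide for SPQ polynomials, and the result of Helton and Nie~\cite{HeltonNie} stating that a nonnegative sos-convex polynomial is sos. Concretely, I would first take an arbitrary nonnegative convex SPQ polynomial $p(x)$ and apply Theorem~\ref{thm:convex_spq_sos_convex} to deduce that $p(x)$ is sos-convex, i.e.\ its Hessian $H(x)$ is an sos-matrix. Then I would invoke~\cite{HeltonNie} to conclude that $p(x)$, being nonnegative and sos-convex, is a sum of squares.

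Since both ingredients are available, there is essentially no real obstacle; the work has already been done in proving Theorem~\ref{thm:convex_spq_sos_convex} (which reduces to finding global minima of the univariate second derivatives $u_i''(x_i)$ and checking one constant matrix is psd) and in citing the Helton--Nie result. The only thing worth doing carefully is making the logical chain explicit: convex $\Rightarrow$ sos-convex (by Theorem~\ref{thm:convex_spq_sos_convex}, specific to the SPQ structure, using that convex polynomials of degree $\ge 4$ need not be sos-convex in general so this step genuinely uses the structure), and then nonnegative $+$ sos-convex $\Rightarrow$ sos (by~\cite{HeltonNie}).

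I would also include a one-line contextual remark contrasting this with the general situation: there exist nonnegative convex polynomials that are not sos~\cite{Blekherman2}, so the corollary shows the SPQ structure rules this out — and moreover, via Corollary~\ref{cor:convex_nonneg_sos} together with Theorem~\ref{thm:convex_spq_sos_convex}, both convexity and nonnegativity-given-convexity of SPQ polynomials become semidefinite-representable, in sharp contrast with the NP-hardness of testing nonnegativity of general SPQ polynomials (Theorem~\ref{thm:NP_hard_nonneg}). That closes the argument.
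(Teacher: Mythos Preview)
Your proposal is correct and matches the paper's own argument exactly: the corollary is stated as an immediate consequence of Theorem~\ref{thm:convex_spq_sos_convex} (convex SPQ $\Rightarrow$ sos-convex) together with the Helton--Nie result that nonnegative sos-convex polynomials are sos. The paper does not give a separate proof beyond this one-line deduction, so your write-up is already more detailed than what appears there.
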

The main result of this subsection is the following stronger statement, which also has computational implications for convex SPQ polynomial optimization (see Section \ref{subsec:spq_optimization}).

\leqnomode
\begin{theorem}
A nonnegative convex SPQ polynomial can be written as the sum of a nonnegative separable and a nonnegative quadratic polynomial.
\label{thm:convex_nonnegative_sum}
\end{theorem}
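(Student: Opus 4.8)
\textit{Proof proposal.} The plan is to translate $p$ so that it attains its minimum at the origin, normalize the SPQ decomposition so that each univariate summand becomes convex with a critical point at $0$, and then simply read off the decomposition.

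\emph{Step 1: reduce to the minimum at the origin.} First I would invoke the fact that a convex polynomial which is bounded below on $\R^n$ attains its infimum (this is the Frank--Wolfe-type theorem of Belousov and Klatte; alternatively, if $w$ is a recession direction of $p$ then for every $\bar x$ the univariate polynomial $t \mapsto p(\bar x + tw)$ is bounded above on $[0,\infty)$ and bounded below by $\inf p$, hence constant, so $p$ is constant along $w$, and one mods out by such directions and inducts on $n$). Since $p \ge 0$ it is bounded below, so it attains its minimum at some $x^\star$; replacing $p(x)$ by $p(x+x^\star)$ — which preserves the SPQ structure, convexity, and nonnegativity — I may assume $\nabla p(0) = 0$ and $p(0) = \min p \ge 0$.

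\emph{Step 2: normalize the decomposition.} Writing $p(x) = q(x) + \sum_{i=1}^n u_i(x_i)$ with $q(x) = \tfrac12 x^T H_q x + b^T x + \gamma$, I would push the constant $\gamma$, the linear terms $b_i x_i$, and the square terms $\tfrac12 (H_q)_{ii} x_i^2$ of $q$ into the univariate parts. This leaves $p(x) = \tfrac12 x^T C x + \sum_i u_i(x_i)$ with $C$ symmetric of zero diagonal. The Hessian is then $H(x) = C + \operatorname{diag}\bigl(u_1''(x_1),\dots,u_n''(x_n)\bigr)$, so convexity ($H(x)\succeq 0$ for all $x$) forces each diagonal entry $u_i''(x_i)$ to be nonnegative, hence each $u_i$ convex; and evaluating $H$ at a point where each $u_i''$ attains its minimum $d_i := \min_t u_i''(t) \ge 0$ shows that the constant matrix $C + D \succeq 0$, where $D := \operatorname{diag}(d_1,\dots,d_n)$. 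Moreover $\nabla p(0) = 0$ now reads $u_i'(0) = 0$ for every $i$, and $p(0) = \sum_i u_i(0) = \min p \ge 0$.

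\emph{Step 3: assemble.} Write $u_i(x_i) = \tfrac12 d_i x_i^2 + v_i(x_i)$; then $v_i'' = u_i'' - d_i \ge 0$, so $v_i$ is convex, and $v_i'(0) = u_i'(0) = 0$, so $v_i$ is globally minimized at $0$; also $v_i(0) = u_i(0)$. Hence
\[
p(x) = \tfrac12 x^T(C+D)x + \sum_{i=1}^n v_i(x_i) = \underbrace{\Bigl(\tfrac12 x^T(C+D)x + \textstyle\sum_{i=1}^n v_i(0)\Bigr)}_{=:\,Q(x)} + \underbrace{\sum_{i=1}^n\bigl(v_i(x_i) - v_i(0)\bigr)}_{=:\,S(x)}.
\]
Here $Q$ is a quadratic polynomial equal to the nonnegative quadratic form $\tfrac12 x^T(C+D)x$ plus the nonnegative constant $\sum_i v_i(0) = \sum_i u_i(0) = \min p$, so $Q \ge 0$; and $S$ is separable with each summand $v_i(x_i) - v_i(0) \ge 0$, so $S$ is a nonnegative separable polynomial. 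This is the desired decomposition (and, via Lemma~\ref{lem:separable_nonneg} together with Theorem~\ref{thm:Hilbert}, it recovers Corollary~\ref{cor:convex_nonneg_sos}).

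The only external input is the attainment of the minimum in Step 1, which is standard; the conceptual point is that, once $p$ is recentered at its minimizer and $q$ is stripped of its diagonal, linear, and constant parts, each already-convex piece $u_i$ is automatically minimized at the origin, so subtracting its value there produces a nonnegative separable polynomial while the leftover quadratic form is automatically nonnegative. I also expect a variant of Step 3 could be routed through convex Fenchel/minimax duality applied to the splitting constraint identifying the ``quadratic variable'' with the ``separable variable,'' but the direct argument above seems cleaner and more elementary.
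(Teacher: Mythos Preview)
Your proposal is correct and follows essentially the same approach as the paper's proof: both use attainment of the infimum (Belousov--Klatte), the decomposition $H(x)=C+D+\operatorname{diag}(u_i''-d_i)$ with $C+D\succeq 0$ from the proof of Theorem~\ref{thm:convex_spq_sos_convex}, and the observation that $u_i(x_i)-\tfrac12 d_i(x_i-\bar x_i)^2$ is convex with a critical point at the minimizer and hence minimized there. The only cosmetic difference is that you translate so $\bar x=0$ and normalize $q$ to a zero-diagonal quadratic form up front, whereas the paper leaves $\bar x$ arbitrary and carries out the Taylor expansion around it; the arithmetic is identical.
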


\begin{proof}
Let $p(x) = q(x) + \sum_{i=1}^n u_i(x_i)$ be a nonnegative convex SPQ polynomial, where $q(x)$ is a quadratic polynomial and $u_i(x_i)$, for $i=1,\dots,n$, is a univariate polynomial in $x_i$. Since $p(x)$ is a lower-bounded convex polynomial, its infimum is attained (see, e.g., \cite{BelKlat}). Let $\bar x \in \R^n$ be a global minimum of $p(x)$, and note that $p(\bar x) \geq 0$ as $p(x)$ is nonnegative. In fact, we may assume that $p(\bar x) = 0$. Indeed, if we define $g(x) = p(x) - p(\bar x)$, then $g(x)$ is a nonnegative convex SPQ polynomial and satisfies $g(\bar x) = 0$. Moreover, if $g(x) \in (N^S+N^Q)_{n,d}$, then $p(x) \in (N^S+N^Q)_{n,d}$ as $p(\bar x) \geq 0$. Hence, from now on, we assume that $p(\bar x) = 0$. We also know that $\nabla p(\bar x) = 0$ by the first order optimality condition.

Since $p(x)$ is convex, as in the proof of Theorem \ref{thm:convex_spq_sos_convex}, the Hessian $H(x)$ of $p(x)$ can be written as $$H(x) = C + D + \text{Diag}(u_1''(x_1) - d_1, \dots, u_n''(x_n) - d_n),$$ where
$C$ is the Hessian of $q(x)$, $d_i \in \R$, for $i=1, \dots, n$, is the minimum value of $u_i''(x_i)$, $D$ is the diagonal matrix with $(d_1, \dots, d_n)^T$ on its diagonal, and $\text{Diag}(u_1''(x_1) - d_1, \dots, u_n''(x_n) - d_n)$ is the diagonal matrix with $(u_1''(x_1) - d_1, \dots, u_n''(x_n) - d_n)^T$ on its diagonal. We know from the proof of Theorem \ref{thm:convex_spq_sos_convex} that $C+D$ is positive semidefinite. We now prove two claims.

\vspace{-0.4cm}

\begin{equation}
\mbox{{\it We have $p(x) = \frac{1}{2} (x - \bar x)^T C (x - \bar x) + \mathlarger{\sum\limits_{i=1}^n} \Big (u_i(x_i) - u_i(\bar x_i) - u_i'(\bar x_i) (x_i - \bar x_i) \Big) $.}}
\label{eq:p_x_Taylor}\tag{$5.3.1$}
\end{equation}
Since $q(x)$ is a quadratic polynomial, by Taylor expansion around $\bar x$, we have
$$q(x) = q(\bar x) + \nabla q(\bar x)^T (x - \bar x) + \frac{1}{2} (x - \bar x)^T C (x - \bar x).$$
Similarly, by Taylor expansion of $u_i(x_i)$ around $\bar x_i$, we have
$$u_i(x_i) = u_i(\bar x_i) + u_i'(\bar x_i) (x_i - \bar x_i) + R_i(x_i),$$
where $R_i(x_i)$ is the remainder polynomial in the expansion. Then,
$$p(x) = q(x) + \sum_{i=1}^n u_i(x_i) = p(\bar x) +  \nabla p(\bar x)^T (x - \bar x) + \frac{1}{2} (x - \bar x)^T C (x - \bar x) + \sum_{i=1}^n R_i(x_i).$$
Since, $p(\bar x) = 0$ and $\nabla p(\bar x) = 0$, and since $R_i(x_i) = u_i(x_i) - u_i(\bar x_i) - u_i'(\bar x_i) (x_i - \bar x_i)$, the claim follows.

\vspace{-0.3cm}

\begin{equation}
\mbox{{\it For $i=1, \dots, n$, we have $u_i(x_i) - u_i(\bar x_i) - u_i'(\bar x_i) (x_i - \bar x_i) - \frac{1}{2} (x_i - \bar x_i)^2 d_i \geq 0$.}}
\label{eq:R_i_minus_d_i}\tag{$5.3.2$}
\end{equation}
Since $u_i''(x_i) - d_i \geq 0$, it follows that the univariate polynomial $f_i(x_i) = u_i(x_i) - \frac{1}{2} (x_i - \bar x_i)^2 d_i$ is convex. By the first-order characterization of convexity, we obtain 
$$u_i(x_i) - \frac{1}{2} (x_i - \bar x_i)^2 d_i = f_i(x_i) \geq f_i(\bar x_i) + f_i'(\bar x_i) (x_i - \bar x_i) = u_i(\bar x_i) + u_i'(\bar x_i) (x_i - \bar x_i),$$
which proves \eqref{eq:R_i_minus_d_i}. 

Now, observe that the equation in \eqref{eq:p_x_Taylor} can be rewritten as
$$p(x) = \frac{1}{2} (x - \bar x)^T (C+D) (x - \bar x) + \mathlarger{\sum\limits_{i=1}^n} \Big (u_i(x_i) - u_i(\bar x_i) - u_i'(\bar x_i) (x_i - \bar x_i) - \frac{1}{2} (x_i - \bar x_i)^2 d_i \Big).$$
The first term in this sum is a nonnegative quadratic polynomial since $C+D$ is positive semidefinite. Also, by \eqref{eq:R_i_minus_d_i}, each summand in the second term is nonnegative, and therefore the second term is a nonnegative separable polynomial.
\end{proof}

\begin{remark}
By contrasting Lemma \ref{lem:counterexample_for_nonneg_sum} and Theorem~\ref{thm:convex_nonnegative_sum}, we see that sos SPQ polynomials cannot always be written as the sum of a nonnegative separable and a nonnegative quadratic polynomial, while nonnegative convex SPQ polynomials can.
\end{remark}

\subsection{Convex SPQ polynomial optimization}\label{subsec:spq_optimization}
\reqnomode

In this subsection, we focus on polynomial optimization problems where the objective and the constraint set are defined by convex SPQ polynomials, and study some implications of Theorem \ref{thm:convex_nonnegative_sum} for such optimization problems.

\subsubsection{Unconstrained case}
\label{subsubsec:unconstrained}

For a convex polynomial $p(x)$ in $n$ variables and degree $2d \geq 2$, consider the problem of finding $p^{*} \mathrel{\mathop:}= \inf\limits_{x \in \R^n} p(x)$, and recovering an optimal solution (whose existence is guaranteed \cite{BelKlat}). Observe that
\begin{equation}
\hspace{5cm}
\begin{aligned}
p^{*} \:\:\: = \:\:\: 
& \sup\limits_{\gamma \in \R}
& & \gamma \\
& \text{s.t.}
&& p(x) - \gamma \in N_{n,2d}.
\end{aligned}
\label{eq:min_p_nonneg}
\end{equation}
A semidefinite programming-based lower bound $p^{sos}$ on $p^*$ can be obtained by replacing the nonnegativity constraint in \eqref{eq:min_p_nonneg} with an sos constraint:
\begin{equation}
\hspace{4.9cm}
\begin{aligned}
p^{sos} \:\:\: \mathrel{\mathop:}= \:\:\:
& \sup\limits_{\gamma \in \R}
& & \gamma \\
& \text{s.t.}
&& p(x) - \gamma \in \Sigma_{n,2d}.
\end{aligned}
\label{eq:min_p_sos}
\end{equation}
Since $p(x)$ is convex, the polynomial $p(x) - \gamma$ is convex for any scalar $\gamma$. Therefore, if it were true that nonnegative convex polynomials are sos, then we would have $p^* = p^{sos}$. While we know that this is not true in general \cite{Blekherman2}, Corollary \ref{cor:convex_nonneg_sos} implies that $p^* = p^{sos}$ if $p(x)$ is also SPQ. In fact, by Theorem~\ref{thm:convex_nonnegative_sum}, the following stronger statement holds when $p(x)$ is convex and SPQ:
\begin{equation}
\hspace{5cm}
\begin{aligned}
p^{*} \:\:\: = \:\:\:
& \sup\limits_{\gamma \in \R}
& & \gamma \\
& \text{s.t.}
&& p(x) - \gamma \in (N^S+N^Q)_{n,2d}.
\end{aligned}
\label{eq:min_p_sos_SPQ}
\end{equation}
While the bounds obtained from the semidefinite programs \eqref{eq:min_p_sos} and \eqref{eq:min_p_sos_SPQ} are the same for convex SPQ polynomials, the size of the semidefinite program \eqref{eq:min_p_sos_SPQ} can be significantly smaller. Indeed, suppose that $p(x)$ is a convex SPQ polynomial in $n$ variables and degree $2d$. Then, the semidefinite constraint in~\eqref{eq:min_p_sos} (cf. Theorem \ref{thm:sos_psd}) is of size $\binom{n+d}{d} \times \binom{n+d}{d}$. By contrast, to implement \eqref{eq:min_p_sos_SPQ}, we impose that $p(x) - \gamma = q(x) + \sum_{i=1}^n u_i(x_i)$, and require the quadratic polynomial $q(x)$ and each univariate polynomial $u_i(x_i)$ to be sos. This reduces the size of the largest semidefinite constraint to $\max \Big \{ \binom{n+1}{1} \times \binom{n+1}{1}, \: \binom{d+1}{d} \times \binom{d+1}{d} \Big \}$. To demonstrate how significant the difference can be, we compare in Table \ref{tab:runtime_comparison} the running times of the two approaches on randomly generated convex SPQ polynomials of different dimensions and degrees. While both approaches always return the same (tight) bound as expected, the difference in running times can be observed even at low degrees and dimensions. All experiments were done using MATLAB, the solver MOSEK \cite{MOSEK}, and a computer with 2.6 GHz speed and 8 GB RAM.

\begin{table}[h]
\centering
\begin{tabular}{|| V{2.5cm} | V{1.1cm} | V{1.1cm} | V{1.1cm} | V{1.1cm} | V{1.1cm} | V{1.1cm} | V{1.1cm} | V{1.1cm} | V{1.1cm} ||} 
 \hline
$(n,2d)$ & (4,10) & (4,12) & (4,14) & (5,10) & (5,12) & (5,14) & (6,10) & (6,12) & (6,14) \tabularnewline [1ex] 
 \hline\hline
\vspace{0.1cm} $\Sigma_{n,2d}$ & 7.44 & 22.43 & 73.86 & 33.72 & 184.92 & 2206.3 & 168.84 & 1894.4 & NA \tabularnewline [1.4ex]
 \hline
\vspace{0.1cm} $(N^S+N^Q)_{n,2d}$ & 5.82 & 6.28 & 5.29 & 6.99 & 4.77 & 4.46 & 4.86 & 4.76 & 5.62 \tabularnewline  [1.4ex]
 \hline
\end{tabular}
\caption{Comparison of running times (in seconds) of the semidefinite programs \eqref{eq:min_p_sos} and \eqref{eq:min_p_sos_SPQ} for minimizing a convex SPQ polynomial}
\label{tab:runtime_comparison}
\end{table}

\vspace{-0.4cm}

\paragraph{Recovering an optimal solution.}
\label{par:recover_opt_pt}
The following proposition shows that Theorem~\ref{thm:convex_nonnegative_sum} can be further exploited to recover a minimizer of a convex SPQ polynomial.

\begin{proposition}
Let $p(x)$ be a convex SPQ polynomial in $n$ variables and degree $2d \geq 2$, and define $p^{*} \mathrel{\mathop:}= \inf_{x \in \R^n} p(x)$. Then, a point $x^* \in \R^n$ such that $p(x^*) = p^*$ can be recovered by finding zeros of at most $n$ nonnegative univariate polynomials and at most one nonnegative quadratic polynomial, which can be obtained by solving a semidefinite program.
\label{prop:unconstrained_recover}
\end{proposition}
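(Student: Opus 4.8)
The plan is to turn the existence proof of Theorem~\ref{thm:convex_nonnegative_sum} into a constructive procedure. The key point is that once $p(x)-p^{*}$ has been written as $q(x)+\sum_{i=1}^{n}u_i(x_i)$ with $q$ a nonnegative quadratic polynomial and the $u_i$ nonnegative univariate polynomials, a point $x^{*}$ is a minimizer of $p$ if and only if it is a common zero of all the summands, and the zero sets of these summands are easy to compute. To make the assembly of these zeros painless, I would additionally insist that each $u_i$ be \emph{convex}, which is legitimate because the separable pieces produced in the proof of Theorem~\ref{thm:convex_nonnegative_sum}, namely $u_i(x_i)-u_i(\bar x_i)-u_i'(\bar x_i)(x_i-\bar x_i)-\tfrac12(x_i-\bar x_i)^{2}d_i$, are already convex. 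Since a univariate polynomial is convex exactly when its second derivative is a sum of squares, and since nonnegative quadratic polynomials are sums of squares (Theorem~\ref{thm:Hilbert}), the program
$$\sup\Big\{\gamma\ :\ p(x)-\gamma=q(x)+\sum\nolimits_{i=1}^{n}u_i(x_i),\ q\ \text{sos},\ u_i\ \text{sos},\ u_i''\ \text{sos}\Big\}$$
is a semidefinite program by Theorem~\ref{thm:sos_psd}; it is feasible with value $p^{*}$ via the decomposition in the proof of Theorem~\ref{thm:convex_nonnegative_sum}, and its value cannot exceed $p^{*}$ since feasibility at $\gamma$ forces $p(x)\ge\gamma$ for all $x$. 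Solving it returns $p^{*}$ together with a nonnegative quadratic $q$ and convex nonnegative univariate polynomials $u_1,\dots,u_n$ with $p(x)-p^{*}=q(x)+\sum_i u_i(x_i)$.

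For the recovery, I would first note that $p$, being convex and bounded below, attains its infimum at some $\bar x$ (cf.~\cite{BelKlat}), so $0=p(\bar x)-p^{*}=q(\bar x)+\sum_i u_i(\bar x_i)$; all summands being nonnegative, this forces $q(\bar x)=0$ and $u_i(\bar x_i)=0$ for every $i$, and conversely any $x$ with $q(x)=0$ and $u_i(x_i)=0$ for all $i$ satisfies $p(x)=p^{*}$. For each $i$ with $u_i\not\equiv 0$, the polynomial $u_i$ is non-constant (it is nonnegative with minimum value $0$), convex, hence has a unique minimizer $r_i$, which is its unique real root and which I compute by univariate root-finding. For the quadratic part, $q$ is itself convex and nonnegative with minimum value $0$, so $\{x:q(x)=0\}=\argmin q=\{x:\nabla q(x)=0\}$, an affine subspace obtained from a single linear system. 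Finally I would return any solution $x^{*}$ of the linear system consisting of $\nabla q(x)=0$ together with the equations $x_i=r_i$ for all $i$ with $u_i\not\equiv 0$. This system is consistent because $\bar x$ satisfies it, and every one of its solutions $x^{*}$ has $q(x^{*})=0$ and $u_i(x_i^{*})=0$ for all $i$, hence $p(x^{*})=p^{*}$.

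The step I expect to be the main obstacle — and the reason for the convexity refinement of the semidefinite program — is precisely the assembly of the per-coordinate zeros into a genuine minimizer. An unrestricted nonnegative decomposition could return a $u_i$ with several distinct real roots, and it is then not clear which root should be assigned to coordinate $i$ so that the chosen values still lie in the affine set $\{x:q(x)=0\}$; a brute-force approach would have to search over products of root sets. Forcing each $u_i$ to be convex collapses its root set to a single point and reduces the whole assembly to one linear-system solve, so the only nontrivial work left is to check that the extra constraint $u_i''$ sos leaves the semidefinite program feasible with optimal value $p^{*}$ — which is exactly where the explicit convex separable decomposition from the proof of Theorem~\ref{thm:convex_nonnegative_sum} is used.
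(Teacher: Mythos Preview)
Your proposal is correct and follows essentially the same approach as the paper: add convexity constraints on the univariate pieces to the semidefinite program~\eqref{eq:min_p_sos_SPQ} (justified by the explicit decomposition in the proof of Theorem~\ref{thm:convex_nonnegative_sum}), then use that a nonzero convex nonnegative univariate polynomial has a unique zero to pin down the coordinates one by one, and finish by solving for the remaining coordinates inside the zero set of the quadratic part. Your formulation via the linear system $\nabla q(x)=0$ together with $x_i=r_i$ is a clean way to phrase what the paper does by substituting the known coordinates into $q$ and solving $q=0$ in the rest.
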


\begin{proof}
Since $p(x) - p^*$ is a nonnegative convex SPQ polynomial, by Theorem \ref{thm:convex_nonnegative_sum}, it can be written as $p(x) - p^* = q(x) + \sum_{i=1}^n u_i(x_i)$, where $q(x)$ is a nonnegative quadratic polynomial (and therefore \emph{convex}), and for $i =1,\dots,n$, $u_i(x_i)$ is a nonnegative \emph{convex} univariate polynomial. (Indeed, it can be easily verified that the univariate polynomials obtained at the end of the proof of Theorem \ref{thm:convex_nonnegative_sum} are convex.) To obtain $p^*$ and such a decomposition of $p(x)-p^*$, we solve the semidefinite program \eqref{eq:min_p_sos_SPQ} with  additional convexity constraints on the univariate polynomials of the separable part.\footnote{Note that a convexity constraint on a univariate polynomial is a semidefinite constraint since it is equivalent to requiring the second derivative of the polynomial to be sos.}

Let $\bar x=(\bar x_1,\dots, \bar x_n)^T$ be a minimizer of $p(x)$. Since the polynomials $q(x), u_1(x_1), \dots, u_n(x_n)$ are nonnegative, the equality $p(\bar x) - p^* = 0$ implies that $q(\bar x) = 0$ and $u_i(\bar x_i) = 0$ for $i =1,\dots,n$. Note that unless $u_i(x_i)$ is identically zero, as a univariate convex polynomial, it has a unique minimizer. Without loss of generality, for some $k \leq n$, let $u_1(x_1), \dots, u_k(x_k)$ be the univariate polynomials that are not identically zero. For $i=1,\dots,k$, the unique minimizer $x_i^*$ of $u_i(x_i)$ can be obtained by setting $u_i(x_i) = 0$. Then, we can let $x_{k+1}^*, \dots, x_n^*$ be any solution to $q(x_1^*, \dots, x_k^*, x_{k+1}, \dots, x_n) = 0$. Now, the point $x^* = (x_1^*, \dots, x_k^*, x_{k+1}^*, \dots, x_n^*)^T$ satisfies $p(x^*) = p^*$.
\end{proof}

\subsubsection{Constrained case}
\reqnomode

A \emph{polynomial optimization problem (POP)} is a problem of the form
\begin{equation}
\hspace{6cm}
\begin{aligned}
& \inf\limits_{x \in \R^n}
& &p(x) \\
& \text{s.t.}
&& x \in K,
\end{aligned}
\label{eq:pop}
\end{equation}
where $K \mathrel{\mathop:}= \{x \in \R^n \: | \: g_i(x) \leq 0, i = 1, \dots, m\}$, and $p(x), g_1(x), \dots, g_m(x)$ are polynomial functions. It is straightforward to see that 
the optimal value $p^{*}$ of problem \eqref{eq:pop} is equal to
\begin{equation}
\hspace{4.6cm}
\begin{aligned}
p^{*} \:\:\: = \:\:\:
& \sup\limits_{\gamma \in \R}
& & \gamma \\
& \text{s.t.}
&& p(x) - \gamma \geq 0, \quad \forall x \in K.
\end{aligned}
\label{eq:min_p_nonneg_const}
\end{equation}
In \cite{Lasserre2}, Lasserre introduced a hierarchy of semidefinite programming-based lower bounds on $p^*$ that asymptotically converge to $p^*$ under a certain assumption on $K$. More recently, finite convergence of this hierarchy has been studied for convex polynomial optimization problems \cite{Lasserre, Klerk}. In particular, under the Slater regularity assumption, the Lasserre hierarchy is known to converge in one step if $p(x), g_1(x),\dots, g_m(x)$ are sos-convex \cite{Lasserre}. Hence, if $p(x), g_1(x),\dots, g_m(x)$ are convex SPQ polynomials, the previous statement, together with Theorem \ref{thm:convex_spq_sos_convex}, implies that $p^*$ can be found by solving a single semidefinite program (associated with the first level of the Lasserre hierarchy). We next show that in the same setting, we can find $p^*$ by solving a much smaller semidefinite program. Moreover, under a mild additional assumption on $p(x)$, we present a procedure to recover an optimal solution to \eqref{eq:pop}.

\begin{theorem}
Consider the polynomial optimization problem \eqref{eq:pop} and assume that the Slater regularity condition holds for $g_1(x), \dots, g_m(x)$, i.e., there exists $\bar x \in \R^n$ such that $g_i(\bar x) < 0$ for $i=1,\dots,m$. Suppose $p(x), g_1(x),\dots, g_m(x)$ are convex SPQ polynomials of degree at most $2d$. Then, the optimal value $p^*$ of~\eqref{eq:pop} can be computed by solving the following semidefinite program:
\begin{equation}
\hspace{3cm}
\begin{aligned}
p^{*} \:\:\: = \:\:\:
\sup\limits_{\gamma \in \R, y \in \R^m} \quad
& \gamma \\
\text{s.t.} \hspace{0.8cm}
& p(x) - \gamma + \sum\limits_{i=1}^{m} y_ig_i(x) \: \in (N^S+N^Q)_{n,2d},\\
& y \geq 0.
\end{aligned}
\label{eq:farkas_applied_sos}
\end{equation}
Moreover, if $p(x)$ is strictly convex, then an optimal solution $x^* \in \R^n$ to \eqref{eq:pop}, i.e., a point $x^* \in K$ such that $p(x^*) = p^*$, can be found by finding zeros of at most $n$ nonnegative univariate polynomials and at most one nonnegative quadratic polynomial, which can be obtained by solving a semidefinite program.
\label{thm:pop_via_sdp}
\end{theorem}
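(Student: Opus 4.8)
I would prove the two assertions of the theorem — the semidefinite reformulation of $p^*$ and the recovery of an optimal point — in turn. For the first, the plan is to start from the reformulation \eqref{eq:min_p_nonneg_const}, which expresses $p^*$ as $\sup\{\gamma : p(x)-\gamma\ge 0\text{ for all }x\in K\}$, and to dualize the description of $K$ via the convex Farkas lemma, i.e., Lagrangian duality for the convex program \eqref{eq:pop}. Since $p,g_1,\dots,g_m$ are convex and the Slater condition holds for the $g_i$, strong duality yields $p^*=\sup_{y\ge 0}\inf_{x\in\R^n}\bigl(p(x)+\sum_{i=1}^m y_ig_i(x)\bigr)$, with the outer supremum attained whenever $p^*$ is finite (the case $p^*=-\infty$ being covered by the usual conventions, and $p^*=+\infty$ excluded by Slater). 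Rewriting the inner infimum as a supremum over lower bounds $\gamma$ turns this into $p^*=\sup\{\gamma : y\ge 0,\ p(x)-\gamma+\sum_{i=1}^m y_ig_i(x)\ge 0\text{ for all }x\in\R^n\}$. The crucial observation is that for every $y\ge 0$ the polynomial $p(x)-\gamma+\sum_i y_ig_i(x)$ is again a convex SPQ polynomial of degree at most $2d$ — a nonnegative combination of convex SPQ polynomials minus a constant — so by Theorem~\ref{thm:convex_nonnegative_sum} it is nonnegative if and only if it belongs to $(N^S+N^Q)_{n,2d}$. Replacing the nonnegativity constraint accordingly gives exactly \eqref{eq:farkas_applied_sos}; and since membership in $(N^S+N^Q)_{n,2d}$ amounts to the existence of a decomposition $q(x)+\sum_i u_i(x_i)$ into a nonnegative quadratic $q$ and nonnegative univariate polynomials $u_i$, each an sos (hence semidefinite) constraint, \eqref{eq:farkas_applied_sos} is a semidefinite program.

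For the recovery statement, assume $p$ is strictly convex; then $p$ is coercive, hence attains its infimum over the nonempty closed set $K$, say at a point $x^\sharp$, and $p^*$ is finite, so the dual optimum $y^*\ge 0$ is attained in \eqref{eq:farkas_applied_sos}. Solving that program — augmented, as in the proof of Proposition~\ref{prop:unconstrained_recover}, with convexity constraints on the univariate pieces of the separable part (which remain feasible by the construction in the proof of Theorem~\ref{thm:convex_nonnegative_sum}) — produces $p^*$ together with a decomposition $p(x)-p^*+\sum_{i=1}^m y_i^*g_i(x)=q(x)+\sum_{i=1}^n u_i(x_i)$, which we denote by $h(x)$, where $q$ is a nonnegative (hence convex) quadratic and each $u_i$ is a nonnegative convex univariate polynomial. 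The key point is that $h$ is \emph{strictly} convex, being the sum of the strictly convex polynomial $p-p^*$ and the convex polynomials $y_i^*g_i$. Evaluating at $x^\sharp$ gives $h(x^\sharp)=\sum_i y_i^*g_i(x^\sharp)\le 0$, while $h\ge 0$; hence $h(x^\sharp)=0$, together with complementary slackness $\sum_i y_i^*g_i(x^\sharp)=0$. Thus $x^\sharp$ is the \emph{unique} global minimizer of $h$ and, since $h\ge 0$, its unique zero.

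It then remains to extract this zero from the decomposition. From $h(x^\sharp)=0$ and nonnegativity of every summand we get $q(x^\sharp)=0$ and $u_i(x^\sharp_i)=0$ for all $i$. For each $i$ with $u_i\not\equiv 0$, convexity forces $u_i$ to have a unique minimizer, which (since $u_i(x^\sharp_i)=0$ and $u_i\ge 0$) equals $x^\sharp_i$ and is the unique real root of $u_i$; call it $x_i^*$ and obtain it by solving a univariate polynomial equation, relabelling these indices as $1,\dots,k$. Substituting the recovered values $x_1^*,\dots,x_k^*$, the remaining coordinates solve $q(x_1^*,\dots,x_k^*,x_{k+1},\dots,x_n)=0$; this left-hand side coincides with $h$ restricted to the affine subspace on which the first $k$ coordinates are fixed (there $\sum_i u_i$ vanishes), hence is a nonnegative strictly convex quadratic in $(x_{k+1},\dots,x_n)$ that vanishes at $x^\sharp$, so it has the single zero $(x^\sharp_{k+1},\dots,x^\sharp_n)$. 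Setting $x_i^*=x^\sharp_i$ for $i>k$ as well, the assembled point $x^*=x^\sharp$ lies in $K$ and satisfies $p(x^*)=p^*$.

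I expect the main obstacle to be conceptual rather than computational: in the constrained case a zero of the Lagrangian-type polynomial $h$ need not be feasible for \eqref{eq:pop}, so one genuinely needs the minimizer of $h$ to be \emph{unique}, which is precisely where strict convexity of $p$ enters, in order to identify the recovered point with the optimizer $x^\sharp\in K$. The other delicate spot is getting the quantifiers right in the strong-duality step — that Slater is imposed on the constraints alone, and the $\pm\infty$ conventions; by contrast, the equivalence of $(N^S+N^Q)$-membership with a semidefinite constraint and the coordinatewise extraction of the zero are routine given Theorem~\ref{thm:convex_nonnegative_sum} and Proposition~\ref{prop:unconstrained_recover}.
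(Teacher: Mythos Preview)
Your proposal is correct and follows essentially the same route as the paper: convex Farkas/strong duality under Slater plus Theorem~\ref{thm:convex_nonnegative_sum} for the first part, and strict convexity of the Lagrangian $h$ to pin down its unique zero, extracted via the decomposition as in Proposition~\ref{prop:unconstrained_recover}, for the second. Your argument is organized slightly differently in that you evaluate $h$ directly at the primal optimizer $x^\sharp$ to get $h(x^\sharp)=0$ (a complementary-slackness shortcut), whereas the paper first locates the minimizer $\bar x$ of $L$ and then shows $\bar x\in K$ and $p(\bar x)=p^*$ by two short contradiction arguments; both reach the same conclusion.
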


\begin{proof}
By the convex Farkas lemma (see, e.g., \cite{StoWitz}), under the Slater regularity condition, a scalar $\gamma$ is a lower bound on the optimal value $p^*$ of \eqref{eq:pop} if and only if there exists a nonnegative vector $y = (y_1, \dots, y_m)^T$ such that $p(x) - \gamma + \sum_{i=1}^{m} y_ig_i(x)$ is nonnegative. Hence,
\begin{equation}
\hspace{3cm}
\begin{aligned}
p^{*} \:\:\: = \:\:\:
\sup\limits_{\gamma \in \R, y \in \R^m} \quad
& \gamma \\
\text{s.t.} \hspace{0.8cm}
& p(x) - \gamma + \sum\limits_{i=1}^{m} y_ig_i(x) \in N_{n,2d}, \\
& y \geq 0.
\end{aligned}
\label{eq:farkas_applied}
\end{equation}
Since $p(x), g_1(x),\dots, g_m(x)$ are convex SPQ polynomials, $p(x) - \gamma + \sum_{i=1}^{m} y_ig_i(x)$ is a convex SPQ polynomial when $y \geq 0$. By Theorem \ref{thm:convex_nonnegative_sum}, $p(x) - \gamma + \sum_{i=1}^{m} y_ig_i(x)$ is nonnegative if and only if it belongs to the set $(N^S+N^Q)_{n,2d}$. Hence, $p^*$ can be computed by the semidefinite program \eqref{eq:farkas_applied_sos}.\footnote{For the reasons discussed in Section \ref{subsubsec:unconstrained}, the semidefinite program \eqref{eq:farkas_applied_sos} can be much smaller than that of the first level of the Lasserre hierarchy, which would correspond to \eqref{eq:farkas_applied_sos} with $(N^S+N^Q)_{n,2d}$ replaced by $\Sigma_{n,2d}$.}

Next, assume that $p(x)$ is strictly convex. Let $(\gamma^*, y^*)$ be an optimal solution\footnote{The existence of $(\gamma^*, y^*)$ is guaranteed by the convex Farkas lemma.} to~\eqref{eq:farkas_applied_sos} (or equivalently~\eqref{eq:farkas_applied}). We have $\gamma^* = p^*$ and $y^* \geq 0$. Consider the polynomial
$$L(x) \mathrel{\mathop:}= p(x) - \gamma^* + \sum\limits_{i=1}^{m} y_i^*g_i(x).$$
Note that $L(x)$ is a nonnegative, strictly convex SPQ polynomial. Therefore, it has a unique minimizer $\bar x \in \R^n$. By Proposition \ref{prop:unconstrained_recover}, the point $\bar x$ can be obtained by finding zeros of at most $n$ nonnegative univariate polynomials and at most one nonnegative quadratic polynomial.

We now show that $\bar x \in K$ and $p(\bar x) = p^*$, and therefore $\bar x$ is an optimal solution to \eqref{eq:pop}. Observe that $L(\bar x) = 0$ since $L(\bar x) > 0$ would imply that the optimal value of~\eqref{eq:farkas_applied} is larger than $p^*$. We claim that there exists a point $\hat x \in K$ such that $L(\hat x) = 0$. Assume for the sake of contradiction that for every $x$ satisfying $g_i(x) \leq 0$, $i=1,\dots,m$, we have
$$L(x) = p(x) - p^* + \sum\limits_{i=1}^{m} y_i^*g_i(x) > 0.$$
As $y_i^*g_i(x) \leq 0$ for $i=1,\dots,m$, it follows that for every $x \in K$, we have $p^* < p(x)$, a contradiction since we know that $p^* = p(x^*)$ for some $x^* \in K$. This shows that there exists a point $\hat x \in K$ such that $L(\hat x) = 0$. But since $\bar x$ is the unique solution of $L(x) = 0$, we must have $\hat x = \bar x$, and therefore $\bar x \in K$. 

Finally, we show that $p(\bar x) = p^*$. Assume for the sake of contradiction that $p^* < p(\bar x)$. Let $x^*$ be the optimal solution to \eqref{eq:pop}, i.e., the (unique) point satisfying $g_i(x^*) \leq 0$ for $i=1,\dots,m$ and $p(x^*) = p^*$. As $\bar x$ is the unique minimizer of the nonnegative polynomial $L(x)$, we must have $L(x^*) > 0$. Since
$$L(x^*) = p(x^*) - p^* +  \sum_{i=1}^{m} y_i^*g_i(x^*),$$
we conclude that $ \sum_{i=1}^{m} y_i^*g_i(x^*) > 0$. But this contradicts the inequalities $y_i^* \geq 0$ and $g_i(x^*) \leq 0$ for $i=1,\dots,m$. Hence $x^* = \bar x$, and therefore $p(\bar x) = p^*$.
\end{proof}

\section{Applications of SPQ Polynomials} \label{sec:applications}
\reqnomode

In this section, we present three potential applications involving SPQ polynomials.

\subsection{Upper bounds on the sparsity of solutions to linear programs}
\label{subsec:lower_bounds_sparsity}

Given a matrix $A \in \R^{m \times n}$ with $m < n$ and a vector $b \in \R^m$, what is the maximum number of zeros that a vector in the polytope\footnote{By a simple rescaling argument, the results of this section generalize to the case where the polytope is of the form $\hat P \mathrel{\mathop:}= \{y \in \R^n \:|\: \hat Ay = \hat b, -\lambda_i \leq y_i \leq \lambda_i, i=1,\dots,n\}$ for some scalars $\lambda_1, \dots, \lambda_n > 0$.}
$P \mathrel{\mathop:}= \{x \in \R^n \:|\: Ax=b, -1 \leq x \leq 1\}$ can have? 
If we denote the $\ell_0$-pseudonorm of a vector $x \in \R^n$ by $||x||_0 \mathrel{\mathop:}= \big |\{i \:|\: x_i \neq 0\} \big|$, then the answer is $n - p_0$, where $p_0$ is the optimal value of the following problem:
\begin{equation*}
\hspace{1.5cm}
\begin{aligned}
(P_0) \hspace{2.5cm}
p_0 \:\:\: \mathrel{\mathop:}= \:\:\:
\min\limits_{x \in \R^n} \quad
& ||x||_0 \\
\text{s.t.} \hspace{0.5cm}
& Ax = b,\\
& -1 \leq x_i \leq 1, \quad i=1,\dots, n.
\end{aligned}
\label{eq:l0_minimization}
\end{equation*}
Computing $p_0$, however, typically requires an intractable enumerative search, as the optimization problem $(P_0)$ is a nonconvex NP-hard problem \cite{Natar}. While many algorithms have been proposed to provide upper bounds on $p_0$, methods that produce lower bounds are less common. One approach to obtain a lower bound on $p_0$ is to replace the $\ell_0$-pseudonorm by its convex envelope over the hypercube, which is the $\ell_1$-norm:
\begin{equation*}
\hspace{1.5cm}
\begin{aligned}
(P_1) \hspace{2.5cm}
p_1 \:\:\: \mathrel{\mathop:}= \:\:\:
\min\limits_{x \in \R^n} \quad
& ||x||_1 \\
\text{s.t.} \hspace{0.5cm}
& Ax = b,\\
& -1 \leq x_i \leq 1, \quad i=1,\dots, n.
\end{aligned}
\label{eq:l1_minimization}
\end{equation*}
Here, $||x||_1 = \sum_{i=1}^{n} |x_i|$. Unlike $(P_0)$, problem $(P_1)$ can be solved efficiently as it can be recast as a linear program. Because $||x||_1 \leq ||x||_0$ for any $x$ with $-1 \leq x_i \leq 1$, $i=1,\dots,n$, we have $\lceil p_1 \rceil \leq p_0$. Noting that the $\ell_0$-pseudonorm and the $\ell_1$-norm are both separable functions, it is natural to ask whether one can improve the lower bound that $(P_1)$ provides by considering separable polynomials. This is the question we study in this subsection. We propose to replace the objective function in~$(P_0)$ by a separable polynomial $\sum_{i=1}^{n} u_i(x_i)$ to obtain ``input-independent" (Section \ref{subsubsec:independent}) and ``input-dependent" (Section \ref{subsubsec:dependent}) surrogates for the $\ell_0$-pseudonorm. As we shall see, our approach leads to a nonnegativity constraint on SPQ polynomials and results in semidefinite programming-based lower bounds on $p_0$. We call the polynomials $u_1, \dots, u_n$ \emph{penalty polynomials}. In the next two subsections, we discuss the choice of these polynomials.

\subsubsection{Input-independent penalty polynomials}\label{subsubsec:independent}

We first consider the setting where the penalty polynomials $u_1, \dots, u_n$ are chosen to be independent of $A$ and $b$, similar to the $\ell_1$-norm approach in $(P_1)$. Consider the optimization problem
\begin{equation*}
\hspace{1.5cm}
\begin{aligned}
(P_u) \hspace{2.5cm}
p_u \:\:\: \mathrel{\mathop:}= \:\:\:
\min\limits_{x \in \R^n} \quad
& \sum\limits_{i=1}^{n} u_i(x_i) \\
\text{s.t.} \hspace{0.5cm}
& Ax = b,\\
& -1 \leq x_i \leq 1, \quad i=1,\dots, n,
\end{aligned}
\label{eq:our_minimization}
\end{equation*}
where, for $i=1,\dots,n$, $u_i(x_i)$ is a univariate polynomial satisfying $u_i(0) \leq 0$ and $u_i(x_i) \leq 1$ when $x_i \in [-1,1]$. It is easy to see that for any choice of such penalty polynomials, we have $\lceil p_u \rceil \leq p_0$. A lower bound on $p_u$ can be obtained by solving the following semidefinite program:
\begin{equation*}
\hspace{-0.3cm}
\begin{aligned}
(P_{\text{fixed}}) \hspace{0.3cm}
p_\text{{fixed}}^u \mathrel{\mathop:}= \:
\max\limits_{\substack{\gamma \in \R, \: \eta, \tau \in \R^n, \\ \mu_1, \dots, \mu_m}} \quad
& \gamma \\
\hspace{-2.5cm} \text{s.t.} \hspace{0.5cm}
& \sum\limits_{i=1}^{n} u_i(x_i) - \gamma - \sum\limits_{j=1}^{m} \mu_j(x) (a_j^Tx-b_j) - \sum\limits_{i=1}^{n} \eta_i (1-x_i) - \sum\limits_{i=1}^{n} \tau_i (1+x_i) \:\: \text{is sos,}\\
& \eta_i \geq 0, \:\: \tau_i \geq 0, \:\: i=1,\dots,n,\\
& \mu_j(x) \: \text{ is an affine polynomial,} \:\: j=1,\dots, m.
\end{aligned}
\label{eq:dual_our_minimization}
\end{equation*}
Here, for $j=1,\dots, m$, $a_j^T$ denotes the $j^{th}$ row of the matrix $A$. Note that the polynomial in the first constraint of $(P_{\text{fixed}})$ is an SPQ polynomial\footnote{If we replace the constants $\eta_i$, $\tau_i$ in $(P_{\text{fixed}})$ by higher even-degree univariate polynomials $\eta_i(x_i)$, $\tau_i(x_i)$, and impose sos constraints on $\eta_i(x_i)$ and $\tau_i(x_i)$ for $i=1,\dots,n$, the polynomial in the first constraint of $(P_{\text{fixed}})$ would still be an SPQ polynomial and the whole program would still be a semidefinite program. Our choice of the constant multipliers $\eta_i,\tau_i$ is for simplicity and due to the fact that increasing the degrees of these multipliers did not result in better bounds in our experiments.}. It is straightforward to check that $p_\text{{fixed}}^u \leq p_u \leq p_0$. Our goal is to first design appropriate penalty polynomials $u_1, \dots, u_n$ that can be used as a proxy for the $\ell_0$-pseudonorm on all instances of $(P_0)$, and then insert them in $(P_{\text{fixed}})$ to approximate $(P_0)$ better than $(P_1)$. In other words, by appropriately choosing $u_1, \dots, u_n$, we hope that $\lceil p_\text{{fixed}}^u  \rceil$ will be a better lower bound on $p_0$ than $\lceil p_1 \rceil$.

In this input-independent setting, it is natural to pick $u_1, \dots, u_n$ so that they are each as close as possible to the $\ell_0$-pseudonorm in one dimension. A possible approach to achieve this goal would be to let $u_1 = \dots = u_n = u$, where $u$ is determined as follows. For a fixed degree $2d$, we let
$$u(t) = c_0 + c_1t + c_2t^2 + \dots + c_{2d}t^{2d},$$
where the coefficients $c_0, c_1, \dots, c_{2d}$ are an optimal solution to the following problem:
\begin{equation}
\hspace{4.5cm}
\begin{aligned}
\max\limits_{c_0, c_1, \dots, c_{2d}} \hspace{0.5cm}
& \int_{-1}^1 u(t)\;\mathrm{d}t \\
\text{s.t.} \hspace{0.5cm}
& c_{2d} \geq 0 \\
& u(0) \leq 0 \\
& u(t) \leq 1 \quad \text{on} \:\: t \in [-1,1].
\end{aligned}
\label{eq:max_integral}
\end{equation}
The constraint $c_{2d} \geq 0$ ensures that the first constraint of $(P_{\text{fixed}})$ can always be satisfied. The objective function and the first two constraints in~\eqref{eq:max_integral} are linear in the decision variables $c_0, c_1, \dots, c_{2d}$. The last constraint requires a univariate polynomial to be nonnegative over an interval and can be turned into an equivalent sos condition through the following proposition. This implies that problem \eqref{eq:max_integral} can be solved as a semidefinite program.

\begin{proposition}[P\'olya-Szeg\"o, Fekete, Markov-Lukacs; see, e.g., \cite{PovRez} for a proof]
A univariate polynomial $p(x)$ of even degree $2d$ is nonnegative over an interval $[a, b]$, with $a<b$, if and only if it can be written as $p(x) = s(x) + (x-a)(b-x)t(x)$, where $t(x)$ and $s(x)$ are sos polynomials of degree at most $2d - 2$ and $2d$ respectively.
\end{proposition}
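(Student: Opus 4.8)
The ``if'' direction is immediate: if $p(x)=s(x)+(x-a)(b-x)t(x)$ with $s,t$ sos, then for $x\in[a,b]$ each of $s(x)$, $(x-a)(b-x)$, and $t(x)$ is nonnegative, so $p(x)\ge 0$.

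For the ``only if'' direction I would first reduce to the interval $[-1,1]$ via the affine change of variables sending $[-1,1]$ onto $[a,b]$: this preserves degrees and the sos property, and sends $1-x^2$ to a positive multiple of $(x-a)(b-x)$, so it suffices to prove that if $p$ has degree $2d$ and $p\ge 0$ on $[-1,1]$, then $p(x)=\sigma(x)+(1-x^2)\tau(x)$ with $\sigma,\tau$ sos, $\deg\sigma\le 2d$, $\deg\tau\le 2d-2$. The plan is to transport the problem to the whole real line through the rational (Möbius-type) substitution $x=\frac{1-y^2}{1+y^2}$, which maps $\R$ onto $(-1,1]$, apply Hilbert's theorem in one variable (nonnegative univariate polynomials are sos), and transport back.

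Concretely, set $f(y):=(1+y^2)^{2d}\,p\!\left(\frac{1-y^2}{1+y^2}\right)$. Clearing denominators shows that $f$ is a polynomial in $y$ of degree at most $4d$, and it is even in $y$. Since $\frac{1-y^2}{1+y^2}\in(-1,1]$ for every real $y$ and $p\ge 0$ on $[-1,1]$, we have $f(y)\ge 0$ for all $y\in\R$, hence $f=\sum_k g_k^2$ with $\deg g_k\le 2d$. Writing each $g_k(y)=e_k(y^2)+y\,o_k(y^2)$ and using that $f$ is even (so the odd-in-$y$ part of $\sum_k g_k^2$ vanishes), we get $f(y)=S_0(y^2)+y^2S_1(y^2)$, where $S_0:=\sum_k e_k^2$ and $S_1:=\sum_k o_k^2$ are sos polynomials with $\deg S_0\le 2d$ and $\deg S_1\le 2d-2$. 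Substituting $y^2=\frac{1-x}{1+x}$, equivalently $1+y^2=\frac{2}{1+x}$, yields
\[
p(x)=2^{-2d}\left[(1+x)^{2d}S_0\!\left(\tfrac{1-x}{1+x}\right)+(1-x^2)\,(1+x)^{2d-2}S_1\!\left(\tfrac{1-x}{1+x}\right)\right].
\]
Finally, for any sos polynomial $S$ of degree at most $2e$, the expression $(1+x)^{2e}S\!\left(\frac{1-x}{1+x}\right)$ is a polynomial in $x$ of degree at most $2e$ and is itself sos, since clearing the denominator inside each square $h_j\!\left(\frac{1-x}{1+x}\right)^2$ (with $\deg h_j\le e$) turns it into $\big[(1+x)^{e}h_j\!\left(\frac{1-x}{1+x}\right)\big]^2$, a genuine square of a polynomial. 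Applying this with $e=d$ and $e=d-1$ exhibits $p$ in the form $\sigma(x)+(1-x^2)\tau(x)$ with the claimed degree bounds; the degenerate case $d=0$, where $p$ is a nonnegative constant, is trivial.

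The step I expect to require the most care is the degree bookkeeping: checking that $f$ is genuinely a polynomial of degree at most $4d$ (so the sos square roots $g_k$ have degree at most $2d$, giving $S_0,S_1$ the sharp degrees), noting that when $p(-1)=0$ the polynomial $f$ merely drops in degree, which is harmless; and verifying that composing a sos polynomial with the Möbius map and clearing denominators keeps it a polynomial and sos rather than a rational function. Everything else is routine.
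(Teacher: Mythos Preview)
The paper does not supply its own proof of this proposition; it merely states the result and refers to Powers--Reznick \cite{PovRez}. Your proposal is correct and is in fact the classical argument behind this result: the Goursat substitution $x=\frac{1-y^2}{1+y^2}$ transports nonnegativity on $[-1,1]$ to global nonnegativity of an even polynomial in $y$, after which Hilbert's one-variable theorem and a straightforward even/odd split recover the desired representation with the right degree bounds. The degree bookkeeping you flag is handled exactly as you outline, and the inversion step is a polynomial identity once denominators are cleared, so it extends from $x\in(-1,1]$ to all $x$ automatically.
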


Problem \eqref{eq:max_integral} yields different polynomials for different degrees. For instance, the optimal univariate polynomials that the solver \cite{MOSEK} returns for degrees $6$ and $10$ are the following (to two digits of accuracy):
\begin{equation*}
\hspace{3cm}
\begin{aligned}
u(t) &= 5.67 t^2 - 10.11 t^4 + 5.45 t^6, \\
u(t) &= 13.00 t^2 - 61.20 t^4 + 128.42 t^6 - 122.78 t^8 + 43.57 t^{10}.
\end{aligned}
\end{equation*}
As one could anticipate from \eqref{eq:max_integral}, our optimal penalty polynomials end up being nonnegative and even. Figure \ref{fig:indep_opt_penalty_poly} illustrates the optimal penalty polynomials for different degrees, together with the $\ell_0$-pseudonorm and the $\ell_1$-norm in dimension one.
\begin{figure}[h]
\centering
\includegraphics[width=0.39\textwidth]{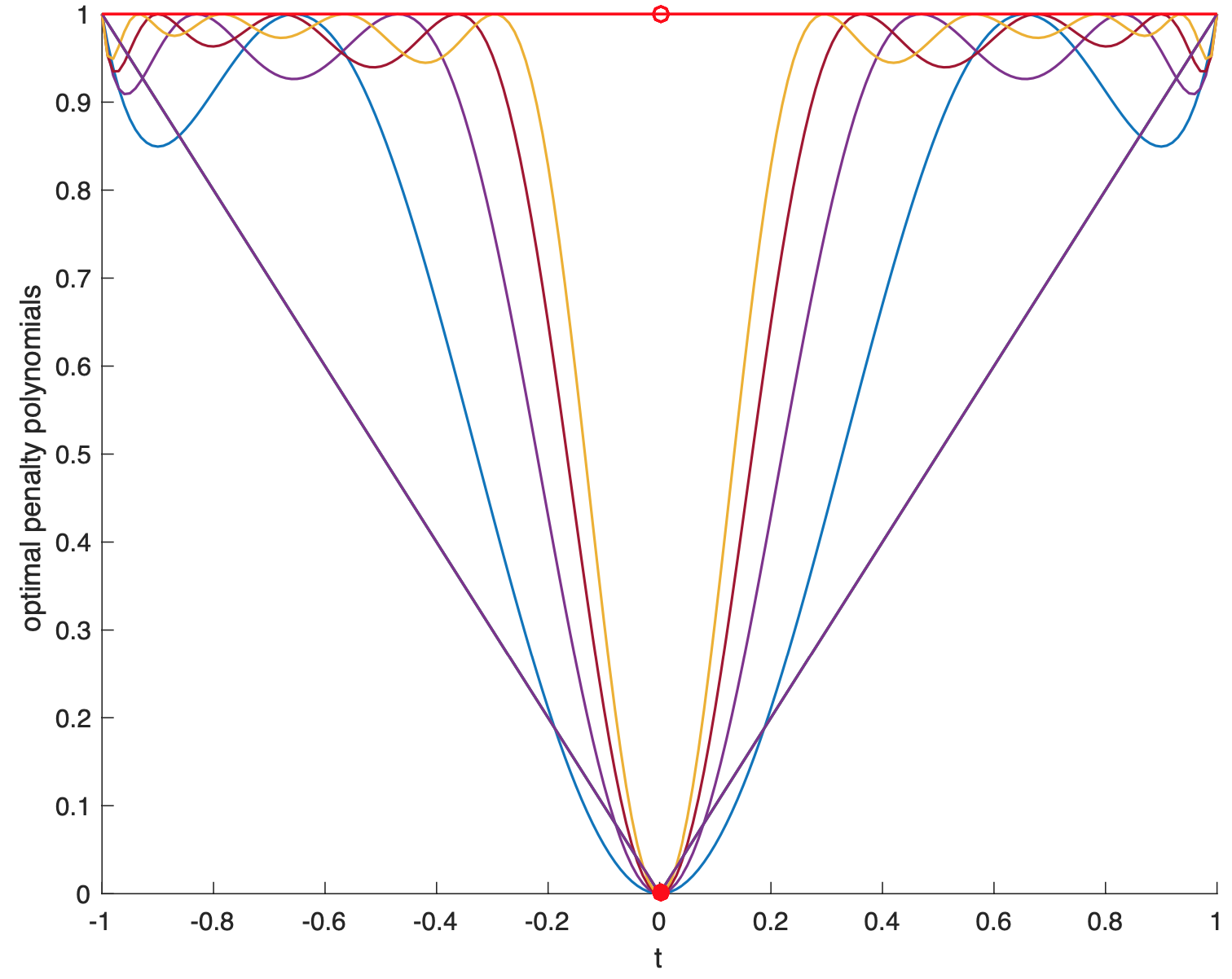}
\vspace{-0.3cm}
\caption{The optimal input-independent penalty polynomials for degrees $2d=6,10,14,18$, together with the $\ell_0$-pseudonorm and the $\ell_1$-norm in dimension one}
\label{fig:indep_opt_penalty_poly}
\end{figure}

\subsubsection{Input-dependent penalty polynomials}\label{subsubsec:dependent}

The penalty polynomials obtained in Section \ref{subsubsec:independent} are input-independent, i.e., they can be used as a proxy for the $\ell_0$-pseudonorm on any instance of $(P_0)$ (given by input $A,b$). In this subsection, we show how the lower bound on $p_0$ can be improved by designing penalty polynomials that take into consideration the problem input. This is achieved by solving the following semidefinite program, where the penalty polynomials $u_1, \dots, u_n$ are part of the decision variables:
\begin{equation*}
\hspace{-1cm}
\begin{aligned}
(P_{\text{free}}) \hspace{0.1cm}
p_{\text{free}}^u \mathrel{\mathop:}=
& \max\limits_{\substack{\gamma \in \R, \: \eta, \tau \in \R^n, \\ u_1, \dots, u_n, \\ r_1, \dots, r_n, s_1 \dots, s_n, \\ \mu_1, \dots, \mu_m, t_{11}, t_{12}, \dots, t_{nm} }}
 \gamma \\
\text{s.t.} \hspace{0.3cm}
& \sum\limits_{i=1}^{n} u_i(x_i) - \gamma - \sum\limits_{j=1}^{m} \mu_j(x) (a_j^Tx-b_j) - \sum\limits_{i=1}^{n} \eta_i (1-x_i) - \sum\limits_{i=1}^{n} \tau_i (1+x_i) \:\: \text{is sos,}\\
& 1 - u_i(x_i) - r_i(x_i) (1-x_i) - s_i(x_i) (1+x_i) - \sum\limits_{j=1}^{m} t_{ij}(x) (a_j^Tx-b_j) \:\: \text{is sos,} \:\: i=1,\dots, n, \\
& u_i(0) \leq 0, \:\: i=1,\dots, n,\\
& \eta_i \geq 0, \:\: \tau_i \geq 0, \:\: i=1,\dots, n, \\
& u_i(x_i) \text{ is a polynomial of degree $2d$,} \:\: i=1,\dots, n, \\
& r_i(x_i), s_i(x_i) \text{ are sos polynomials of degree $2d-2$,} \:\: i=1,\dots, n, \\
& \mu_j(x), \: t_{ij}(x) \: \text{ are affine polynomials,} \:\: i=1,\dots, n, \: j=1,\dots, m.
\end{aligned}
\label{eq:p_free}
\end{equation*}
The second set of constraints\footnote{We remark that the polynomials in the second set of constraints of $(P_{\text{free}})$ are univariate plus quadratic, and therefore, by Theorem \ref{thm:nonneg_uni_plus_quadratic}, are nonnegative if and only if they are sos.} in $(P_{\text{free}})$ implies that for $i=1,\dots,n$, we have $u_i(x_i) \leq 1$ for all $x_i \in [-1,1]$ for which there exist $x_2, \dots, x_n$ with $Ax=b$. This, together with the third set of constraints, ensures that the polynomials $u_1, \dots, u_n$ are underestimators of the $\ell_0$-pseudonorm over the polytope $P = \{x \in \R^n \:|\: Ax=b, -1 \leq x \leq 1\}$. In addition, since the penalty polynomials designed as an optimal solution to \eqref{eq:max_integral} are feasible for $(P_{\text{free}})$, we have $p_{\text{fixed}}^u \leq p_\text{{free}}^u \leq p_0$. We observe in our experiments (see Section~\ref{subsubsec:experiment}) that $p_\text{{free}}^u$ is often a better lower bound on $p_0$ than $p_{\text{fixed}}^u$, and that both $p_{\text{fixed}}^u$ and $p_\text{{free}}^u$ are often better lower bounds on $p_0$ than $p_1$. Figure \ref{fig:compare_dep_opt_penalty_poly} compares optimal input-independent and input-dependent penalty polynomials of degree $2d=6$ for an input $A \in \R^{5 \times 10}, b \in \R^{5}$ to $(P_0)$ for which $\lceil p_{\text{free}}^u \rceil > \lceil p_{\text{fixed}}^u \rceil$. 
\begin{figure}[h]
\centering
\begin{subfigure}[t]{0.38\textwidth}
\includegraphics[valign=t, width=\textwidth]{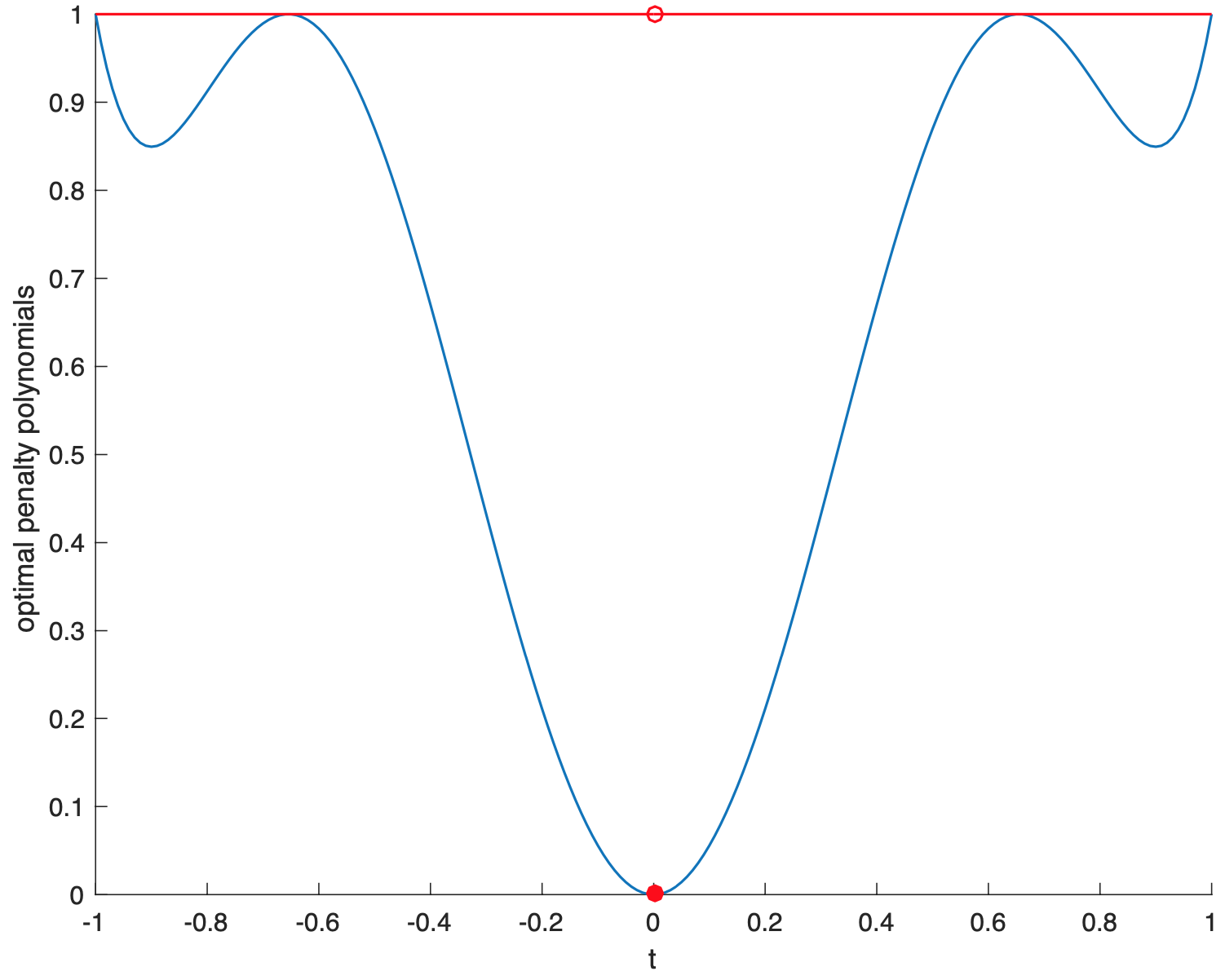}
\vspace{-0.1cm}
\caption{The optimal input-independent penalty polynomial for degree $2d=6$, together with the $\ell_0$-pseudonorm in dimension one}
\label{fig:degree_6_fixed}
\end{subfigure}
\hfill
\begin{subfigure}[t]{0.42\textwidth}
\vspace{-0.13cm}
\includegraphics[valign=t, width=\textwidth]{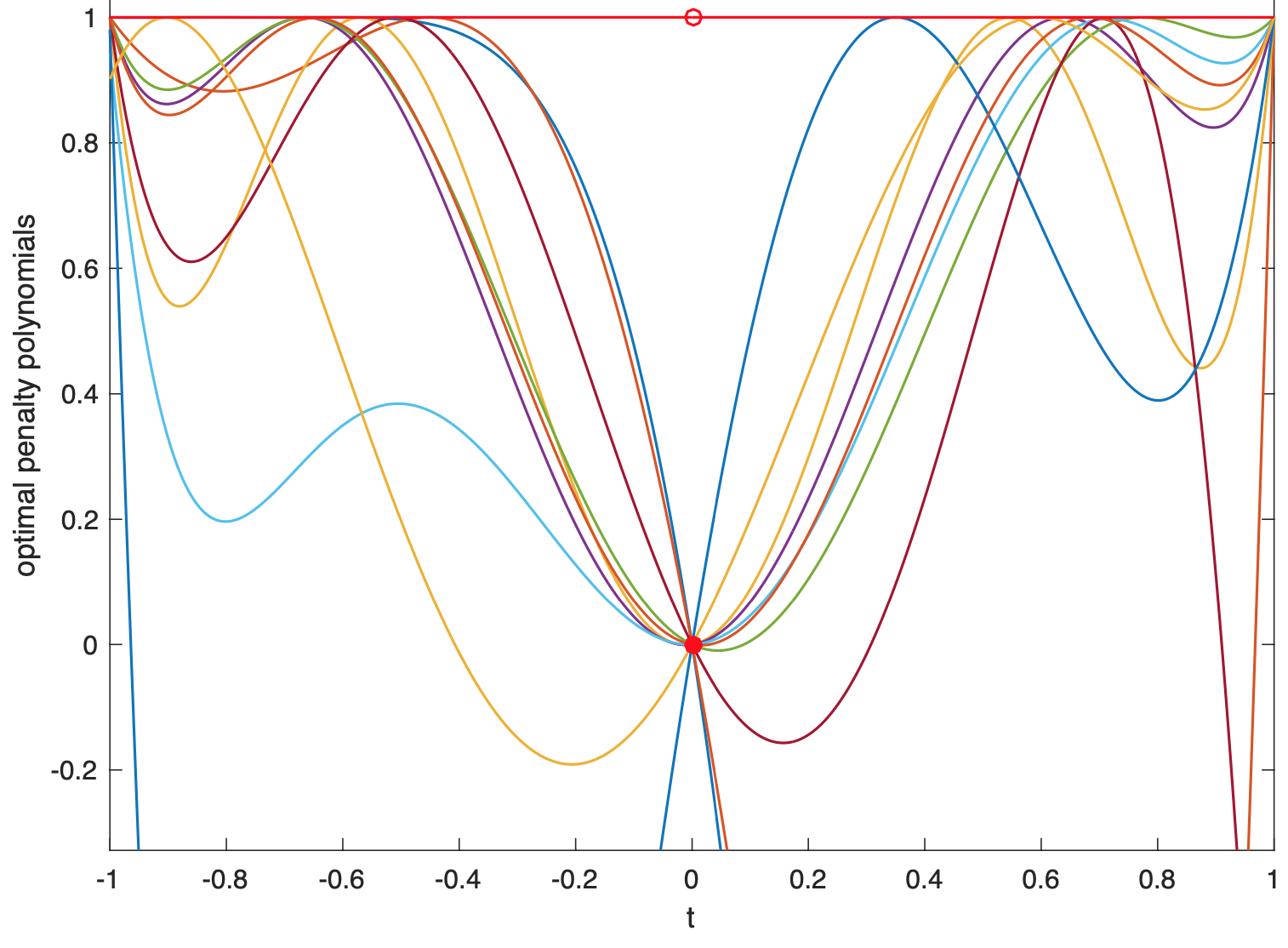}
\vspace{-0.24cm}
\caption{The optimal input-dependent penalty polynomials for degree $2d=6$, together with the $\ell_0$-pseudonorm in dimension one}
\label{fig:dep_opt_penalty_poly}
\end{subfigure}
\vspace{-0.15cm}
\caption{Comparison of optimal input-independent and input-dependent penalty polynomials of degree $2d=6$ for an input $A \in \R^{5 \times 10}, b \in \R^{5}$ to $(P_0)$ for which $\lceil p_{\text{free}}^u \rceil > \lceil p_{\text{fixed}}^u \rceil$}
\label{fig:compare_dep_opt_penalty_poly}
\end{figure}

\subsubsection{Numerical experiments}\label{subsubsec:experiment}

We illustrate our method by a small proof-of-concept numerical experiment. We set $n=10$ and $m = 5$, and generate a matrix $A \in \R^{m \times n}$ and a vector $b \in \R^m$ with entries drawn independently from the standard normal distribution. On $100$ instances where the polytope $P = \{x \in \R^n \:|\: Ax=b, -1 \leq x \leq 1\}$ is nonempty, we compare the lower bounds $p_1$, $p_{\text{fixed}}^u$, $p_\text{{free}}^u$ on $p_0$. The degree of our penalty polynomials is chosen to be $2d = 6$. Table \ref{tab:comparison_lower_bounds} (left) shows the comparison between these three lower bounds and Table \ref{tab:comparison_lower_bounds} (right) compares their ceilings, which are also valid lower bounds on $p_0$. In each row, two lower bounds are compared and the number of times that equalities/strict inequalities hold out of our 100 instances are recorded. The results indicate that replacing the $\ell_1$-norm with appropriate separable penalty polynomials frequently improves the lower bound on $p_0$. For example, it can be seen in Table~\ref{tab:comparison_lower_bounds} (right) that the strict inequality $\lceil p_1 \rceil < \lceil p_{\text{free}}^u \rceil$ holds 99 times out of the 100 instances.
\begin{table}[h]
\centering
\parbox{0.4\linewidth}{
\centering
\begin{tabular}{| m{1cm} || m{0.8cm} | m{0.8cm} | m{0.8cm} || m{1cm} |}
 \hline
  & \centering $>$ & \centering $=$ & \centering $<$ & \tabularnewline
 \hline \hline
  \vspace{0.1cm} \centering $p_1$ & \centering 19 & \centering 0  & \centering 81 & \vspace{0.1cm} \centering $p_{\text{fixed}}^u$ \tabularnewline [0.8ex]
 \hline\hline
\vspace{0.1cm} \centering $p_{\text{fixed}}^u$ & \centering 0 & \centering 0 & \centering 100 & \vspace{0.1cm} \centering $p_\text{{free}}^u$ \tabularnewline [0.8ex]
 \hline\hline
\vspace{0.1cm} \centering $p_1$ & \centering 0 & \centering 0 & \centering 100 & \vspace{0.1cm} \centering $p_\text{{free}}^u$ \tabularnewline [0.8ex]
 \hline
\end{tabular}
}
\hspace{1cm}
\parbox{0.4\linewidth}{
\centering
\begin{tabular}{| m{1.1cm} || m{0.8cm} | m{0.8cm} | m{0.8cm} || m{1.1cm} |}
 \hline
 & \centering $>$ & \centering $=$ & \centering $<$ & \tabularnewline
 \hline  \hline
\vspace{0.1cm} \centering $\lceil p_1 \rceil$ & \centering 0 & \centering 75 & \centering 25 & \vspace{0.1cm} \centering $\lceil p_{\text{fixed}}^u \rceil$ \tabularnewline [0.8ex]
 \hline\hline
\vspace{0.1cm} \centering $\lceil p_{\text{fixed}}^u \rceil$ & \centering 0 & \centering 12 & \centering 88 &  \vspace{0.1cm} \centering $\lceil p_\text{{free}}^u \rceil$ \tabularnewline [0.8ex]
 \hline\hline
\vspace{0.1cm} \centering $\lceil p_1 \rceil$ & \centering 0 & \centering 1 & \centering 99 &\vspace{0.1cm} \centering $\lceil p_\text{{free}}^u \rceil$ \tabularnewline [0.8ex]
 \hline
\end{tabular}
}
\caption{Pairwise comparison of lower bounds $p_1, p_{\text{fixed}}^u, p_\text{{free}}^u$ on $p_0$ (left) and pairwise comparison of their ceilings (right): in each row, the number of times (out of 100 randomly generated instances of $(P_0)$) that equalities/strict inequalities hold between two quantities are recorded}
\label{tab:comparison_lower_bounds}
\end{table}

\vspace{-0.36cm}
\subsection{Convex SPQ polynomial regression}\label{subsec:spq_regression}

In this subsection, we consider the problem of convex polynomial regression (see \cite{CurHall}). We assume that we have $m$ noisy measurements $(w_i, y_i)$, $i = 1, \dots, m$, of an unknown convex function $f: \R^n \rightarrow \R$. To extrapolate this function at new points, we would like to find a polynomial $p: \R^n \rightarrow \R$ (of a given degree) which best explains the observations. This can be done, e.g., by minimizing the sum of the absolute deviations between the observed values and the predicted ones:
\vspace{-0.2cm}
\begin{equation}
\hspace{5.3cm}
\begin{aligned}
& \min\limits_{p}
& & \sum\limits_{i=1}^{m} |p(w_i)-y_i|.
\end{aligned}
\end{equation}
In order to exploit the fact that the underlying function $f$ is convex, one would like to impose a convexity constraint on the regressor $p(x)$. However, the convexity constraint makes the resulting regression problem intractable. In \cite{CurHall}, the authors instead impose an sos-convexity constraint on $p(x)$ and solve the resulting regression problem by semidefinite programming. 

Suppose that in addition to being convex, the underlying unknown function $f$ is known to have low-degree interactions between its variables. In that case, it is natural to approximate $f$ with a convex SPQ polynomial. As we see below, this will also help significantly with the scalability of the resulting regression problem. We thus consider the following SPQ convex regression problem:
\begin{equation}
\hspace{4cm}
\begin{aligned}
\min\limits_{p \text{ SPQ and of deg. } 2d} \hspace{0.85cm}
& \sum\limits_{i=1}^{m} |p(w_i)-y_i|\\
\text{s.t.} \hspace{1.2cm}
& H(x) \succeq 0.
\end{aligned}
\label{eq:convex_spq_regression}
\end{equation}
Here, the decision variables are the coefficients of $p(x)$, the degree $2d$ is fixed, and the matrix $H(x)$ is the Hessian of $p(x)$. Observe that by the proof of Theorem \ref{thm:convex_spq_sos_convex}, Problem \eqref{eq:convex_spq_regression} is equivalent to
\begin{equation}
\hspace{2.4cm}
\begin{aligned}
\min_{\substack{\text{$p$ SPQ and of deg. $2d$},\\ u_1, \dots, u_n \text{ of deg. $2d-2$,}\\ Q \in \R^{n \times n}}} \hspace{0.85cm}
& \sum\limits_{i=1}^{m} |p(w_i)-y_i|\\
\text{s.t.} \hspace{1.2cm}
& H(x) = Q + \text{Diag}(u_1(x_1),\dots,u_n(x_n)) \quad \forall x \in \R^n,\\
& u_i(x_i) \text{ is sos for } i=1,\dots,n, \\
& Q \succeq 0,
\end{aligned}
\label{eq:convex_spq_regression_cheap}
\end{equation}
where $Q$ is a matrix with constant entries, $u_i(x_i)$ is a univariate polynomial in $x_i$ for $i=1,\dots,n$, $\text{Diag}(u_1(x_1),\dots,u_n(x_n))$ is the diagonal matrix with the vector $(u_1(x_1),\dots,u_n(x_n))^T$ on its diagonal, and the equality constraint in \eqref{eq:convex_spq_regression_cheap} is imposed by coefficient matching.

For our numerical experiment, we consider the following family of convex functions:
\begin{equation}
\hspace{3.4cm}
f_{a,b}(x) = \log \Big (\sum\limits_{i=1}^{n} a_i e^{b_ix_i} \Big ) + x^T (aa^T + I)x + b^Tx,
\label{eq:family_convex_func}
\end{equation}
where the entries of $a, b \in \R^n$ are drawn uniformly and independently from the interval $[0, 4]$ and $[-2, 2]$ respectively, and $I$ is the $n \times n$ identity matrix. We consider $r=100$ different instances of functions in $10$ variables thus generated. For each instance, we have a training (resp. test) set of $m=300$ (resp. $t=100$) random vectors $w_i \in \R^{10}$ drawn independently from the (multivariate) standard normal distribution. The values $y_i$ are then computed as $y_i = f_{a,b}(w_i) + \epsilon_i$, where $\epsilon_i$ is again chosen independently from the standard normal distribution. Restricting ourselves to polynomials of degree $2d = 4$, we compare the performances of the four regression problems shown in Table \ref{tab:regression_problems}.
\begin{table}[h]
\centering
\begin{tabular}{|| m{4.48cm} | m{11.6cm} ||} 
\hline
\vspace{0.12cm} Polynomial regression & \vspace{0.1cm}
$\quad \min\limits_{p} \quad \sum\limits_{i=1}^{m} |p(w_i)-y_i|$. \\  [2ex] 
\hline
\vspace{0.12cm} SPQ polynomial regression & \vspace{0.1cm}
$\hspace{0.275cm} \min\limits_{p \text{ SPQ}} \:\:\:\: \sum\limits_{i=1}^{m} |p(w_i)-y_i|$.  \\ [2ex] 
\hline
Sos-convex polynomial regression &
$\quad \min\limits_{p} \quad \sum\limits_{i=1}^{m} |p(w_i)-y_i| \quad \text{s.t.} \:\: H(x)\text{ is an sos-matrix.}$ \\ [2ex] 
\hline
\vspace{0.12cm} SPQ convex polynomial regression & \vspace{0.1cm}
$\hspace{0.28cm} \min\limits_{p \text{ SPQ}} \:\:\:\: \sum\limits_{i=1}^{m} |p(w_i)-y_i| \quad \text{s.t.} \:\: H(x) = Q + \text{Diag}(u_1(x_1),\dots,u_n(x_n)) \quad \forall x,$\\ [2ex] 
& \hspace{4.55cm} $Q \succeq 0, \:\: u_i(x_i) \text{ is sos for } i=1,\dots,n.$
\\ [2ex] 
\hline
\end{tabular}
\caption{The four regression problems considered in our experiments}
\label{tab:regression_problems}
\end{table}

We solve the four regression problems using the training set and obtain an optimal polynomial $p^*$ in each case. We then calculate the average absolute deviation error and the maximum absolute deviation error over the test set:
$$\text{Avg Dev}_{a,b} = \frac{1}{t} \: \sum\limits_{i=1}^{t} \big |f_{a,b} (w_i) - p^*(w_i) \big | \:\: , \hspace{1.2cm} \text{Max Dev}_{a,b} = \max_{1 \leq i \leq t} \big |f_{a,b} (w_i) - p^*(w_i) \big|  \:\: .$$
The histograms in Figure \ref{fig:regression_plots} compare these test errors over $r=100$ instances. As the results illustrate, we are able to obtain significantly smaller errors with SPQ convex polynomial regression.
\begin{figure}[h]
\centering
\begin{minipage}[b]{0.44\textwidth}
\includegraphics[width=\textwidth]{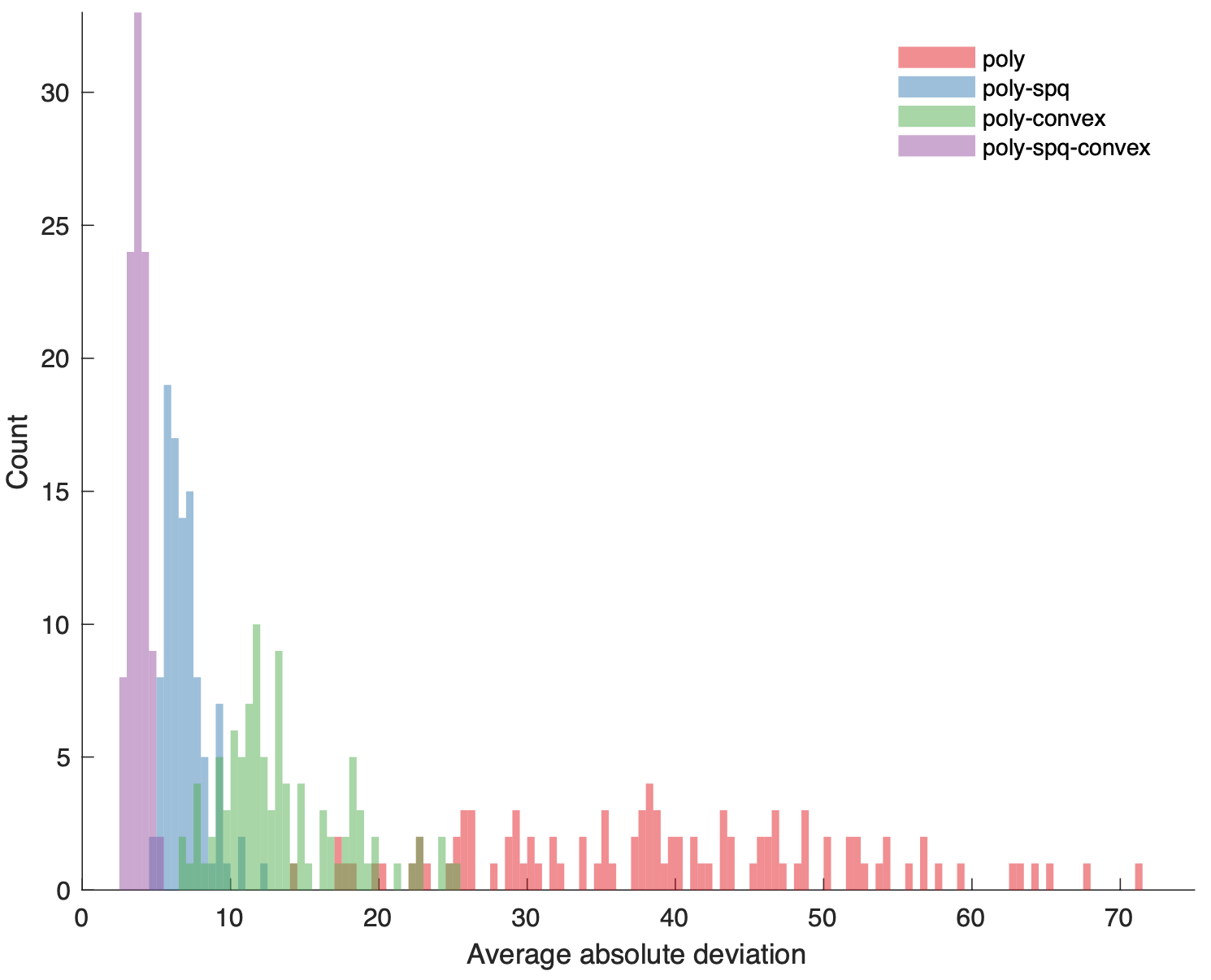}
\end{minipage}
\begin{minipage}[b]{0.44\textwidth}
\includegraphics[width=\textwidth]{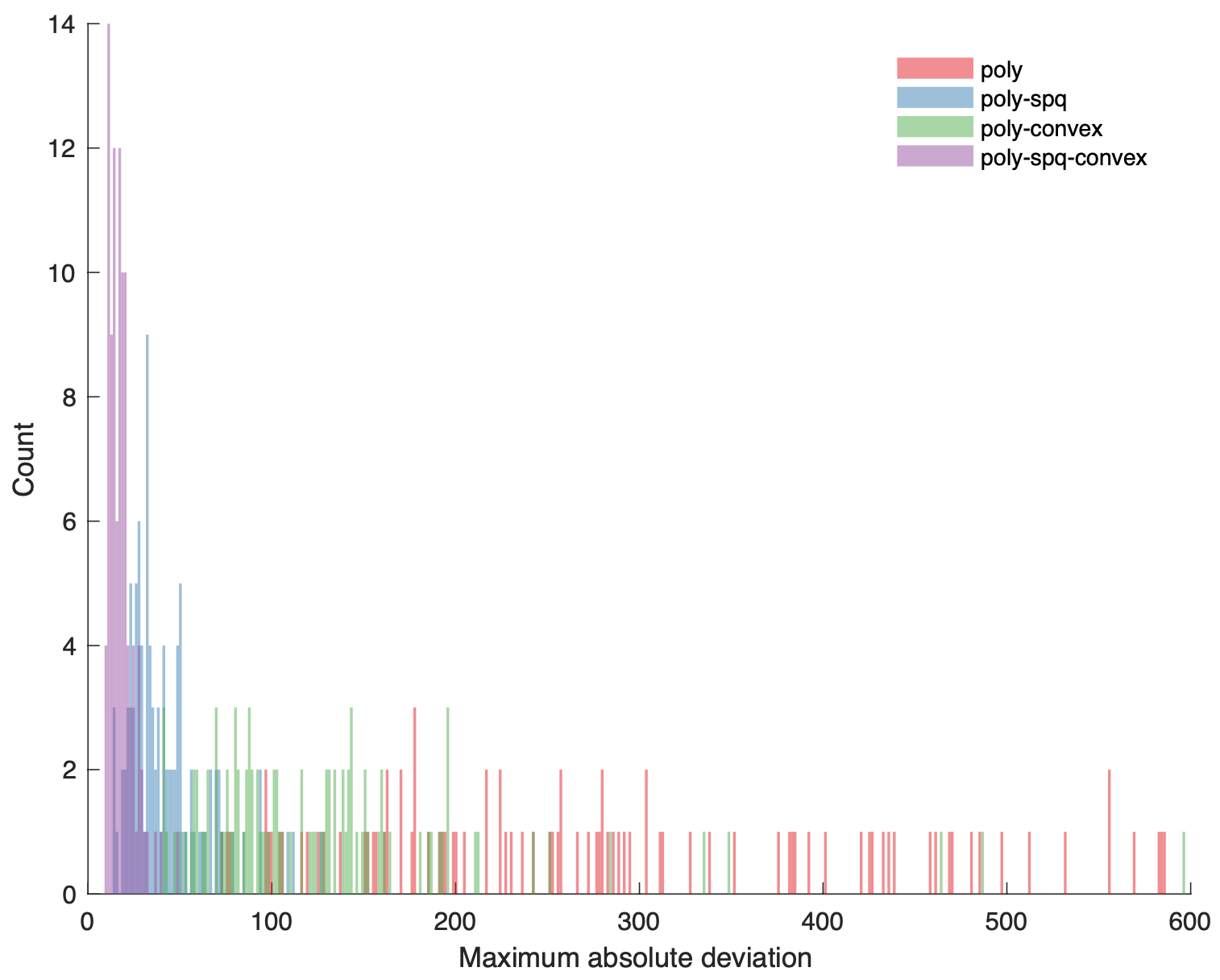}
\end{minipage}
\vspace{-0.1cm}
\caption{Histograms demonstrating test set performance of the different regression approaches over 100 functions randomly chosen from the family of convex functions $f_{a,b}$ given in \eqref{eq:family_convex_func}}
\label{fig:regression_plots}
\end{figure}

\newpage
We shall also remark on the running times of sos-convex and SPQ convex polynomial regression problems. Recall that for SPQ convex polynomial regression, instead of requiring $H(x)$ to be an sos-matrix, we equivalently require $H(x)$ to be the sum of a positive semidefinite constant matrix and a diagonal matrix with entries that are univariate sos polynomials. As can be seen in Table \ref{tab:regression_runtime_comparison}, this leads to a significant difference in running times between the sos-convex and SPQ convex regression formulations already at $n=10$. While we are unable to run sos-convex regression programs beyond $n = 18$ due to memory constraints, SPQ convex regression programs take less than a minute to execute for $n = 80$.
\begin{table}[h]
\centering
\begin{tabular}{|| m{3.46cm} | m{1cm} | m{1cm} | m{1cm} | m{1cm} | m{1.1cm} | m{1cm} | m{1cm} | m{1cm} | m{1cm} ||} 
 \hline
\centering $n$ & \centering $10$ & \centering $12$ & \centering $14$ & \centering $16$ & \centering $18$ & \centering $20$ & \centering $40$ & \centering $60$ & \centering $80$ \tabularnewline [0.5ex]  
 \hline\hline
\centering \small{SPQ convex regression} & \centering 4.6 & \centering 4.7 & \centering 5.9 & \centering 6.0 & \centering 6.4 & \centering 8.8 & \centering 20.1  & \centering 32.8  & \centering  48.8 \tabularnewline  [0.9ex] 
 \hline
\centering \small{Sos-convex regression} & \centering 23.3 & \centering 69.2 & \centering 267.4 & \centering 825.7 & \centering 17778.5 & \centering NA & \centering NA & \centering NA & \centering NA \tabularnewline [0.9ex] 
 \hline
\end{tabular}
\vspace{-0.1cm}
\caption{Comparison of running times (in seconds) averaged over 5 instances for sos-convex and SPQ convex polynomial regression on problems of increasing size with degree $2d = 4$}
\label{tab:regression_runtime_comparison}
\end{table}

\vspace{-0.7cm}

\subsection{The Newton-SPQ method}\label{subsec:newton_spq}

In this subsection, we propose a generalization of Newton's method for minimizing a multivariate function. Recall that Newton's method for minimizing a function $f:~\mathbb{R}^n~\rightarrow~\mathbb{R}$ first approximates $f$ with its second-order Taylor expansion at a current iterate $\hat x \in \R^n$:
\vspace{-0.1cm}
\begin{equation}
\hspace{2.8cm}
q(x) \mathrel{\mathop:}= f(\hat x) + \nabla f(\hat x)^T(x-\hat x) + \frac{1}{2} (x - \hat x)^T \nabla^2 f(\hat x) (x - \hat x).
\label{eq:quad_taylor}
\end{equation}
It then chooses the next iterate to be a critical point of the quadratic polynomial $q(x)$. If the function $f$ is convex, this critical point will be a global minimum of $q(x)$. When $f$ is not convex, what is commonly done in practice is to add a scaled identity matrix to the Hessian of $f$, with the scale adjusted at every iteration so that the resulting matrix is positive semidefinite.

Here, we propose to approximate $f$ with an SPQ polynomial around the current iterate and minimize the resulting SPQ polynomial in order to obtain the next iterate. The idea of approximating $f$ with an SPQ polynomial instead of a quadratic polynomial allows us to take advantage of higher-order information, while maintaining tractability of the iterative method through sos techniques and a very structured semidefinite program.

Let $f(x) = f(x_1, \dots, x_n)$ be the function that we would like to minimize. At the current iterate $\hat x = (\hat x_1, \dots, \hat x_n)$, we find the best SPQ polynomial approximation of $f$ as follows. We first keep the quadratic approximation \eqref{eq:quad_taylor} that would also appear in Newton's method. Then, in order to capture higher-order information, we add, for each variable $x_i$, a higher-order Taylor expansion of the univariate function $f_i(x_i)$ obtained by restricting $f$ to the line $\hat x + \alpha e_i$, where $e_i$ is the $i^{\text{th}}$ standard coordinate vector. We need to also subtract the quadratic part of these univariate Taylor expansions as they are already accounted for in the quadratic approximation of $f$. More specifically, let $q(x)$ be the second-order Taylor expansion of $f$ at $\hat x$ as given in \eqref{eq:quad_taylor}. For each variable $x_i$, let
\vspace{-0.06cm}
$$f_i(x_i) \mathrel{\mathop:}= f(\hat x_1, \dots, \hat x_{i-1}, x_i, \hat x_{i+1}, \dots, \hat x_n)$$ 
be the univariate function that is obtained from $f$ by setting $x_j = \hat x_j$ for $j \neq i$. For a fixed integer $d \geq 2$, let $u_{i, 2}(x_i)$ and $u_{i, 2d}(x_i)$ be the second-order and $2d^{\text{th}}$-order Taylor expansions of $f_i(x_i)$ at $\hat x_i$, respectively. We then obtain the following degree-$2d$ SPQ polynomial approximation of $f$ at $\hat x$:
\vspace{-0.06cm}
$$p(x) \mathrel{\mathop:}= q(x) + \mathlarger{\sum\limits_{i=1}^{n}} \Big (u_{i, 2d}(x_i) - u_{i, 2}(x_i) \Big).$$
Similar to the modification done in Newton's method to make $q(x)$ convex, we propose to modify $p(x)$ by adding $\lambda \sum_{i=1}^n (x_i^{2d} + x_i^2)$, where $\lambda$ is the smallest nonnegative scalar that makes the polynomial $p(x) + \lambda \sum_{i=1}^n (x_i^{2d} + x_i^2)$ convex. It is not hard to see that such $\lambda$ always exists (see \cite{AhmHall2}), and by Theorem~\ref{thm:convex_spq_sos_convex}, $\lambda$ can be computed by a semidefinite program where the size of the largest semidefinite constraint is $\max \{n \times n, \: d \times d \}$. To obtain the next iterate, we minimize the convex SPQ polynomial $p(x)~+~\lambda \sum_{i=1}^n (x_i^{2d} + x_i^2)$ and recover a point at which the optimal value is achieved. This minimization problem is precisely of the type studied in Section \ref{subsubsec:unconstrained}. Therefore, by using Theorem \ref{thm:convex_nonnegative_sum}, the minimization can be carried out by a scalable semidefinite program, and an optimal solution can be recovered, e.g., using Proposition \ref{prop:unconstrained_recover}.

As an example, we compare the Newton's method and the Newton-SPQ method to minimize the bivariate function
\vspace{-0.3cm}
\begin{equation}
\hspace{2.4cm}
f(x_1, x_2) = 2(x_1-x_2) \: \arctan(x_1-x_2) - \log(1+(x_1-x_2)^2) + x_1^2,
\label{eq:f_newton}
\end{equation}
which is strictly convex and has a unique global minimum at $(x_1,x_2) = (0,0)$. We observe that Newton's method fails to converge to the global minimum of $f$ if the coordinates of the initial point $(\hat x_1, \hat x_2)$ are not close to each other. More precisely, Newton's method converges to $(0,0)$ only for initial values $(\hat x_1, \hat x_2)$ that approximately satisfy $|\hat x_1 - \hat x_2| \leq 1.3917$. By contrast, we observe that the Newton-SPQ method with $2d=4$ converges to the global minimum of $f$ for every initial point. The plots in Figure~\ref{fig:test} demonstrate this situation where the basins of attraction of the global minimum for both methods are shown with initial points $(\hat x_1, \hat x_2)$ chosen from the domain $[-20, 20] \times [-20, 20]$.
\begin{figure}[h]
\centering
\begin{minipage}[b]{0.29\textwidth}
\includegraphics[width=\textwidth]{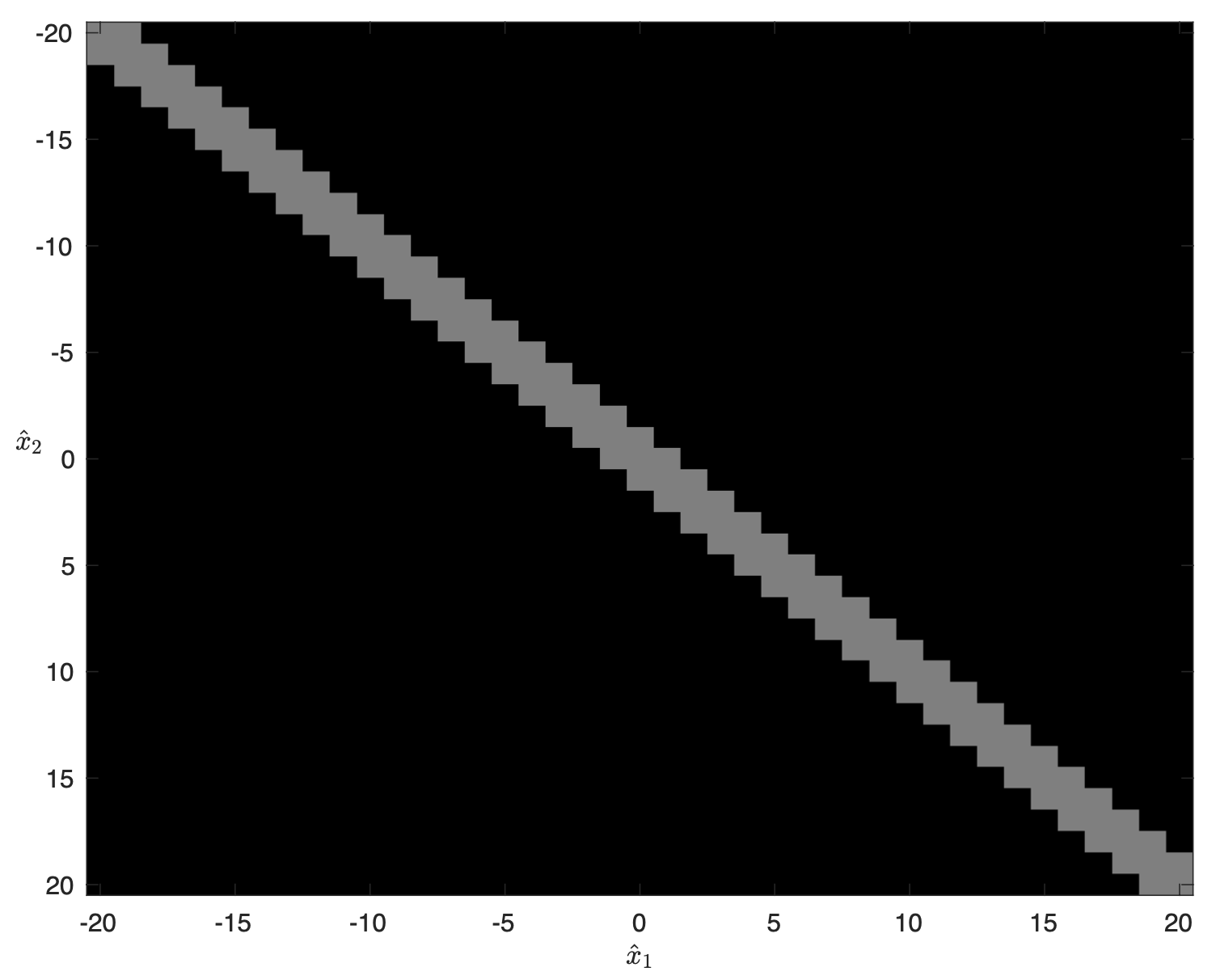}
\end{minipage}
\hspace{1.5cm}
\begin{minipage}[b]{0.29\textwidth}
\includegraphics[width=\textwidth]{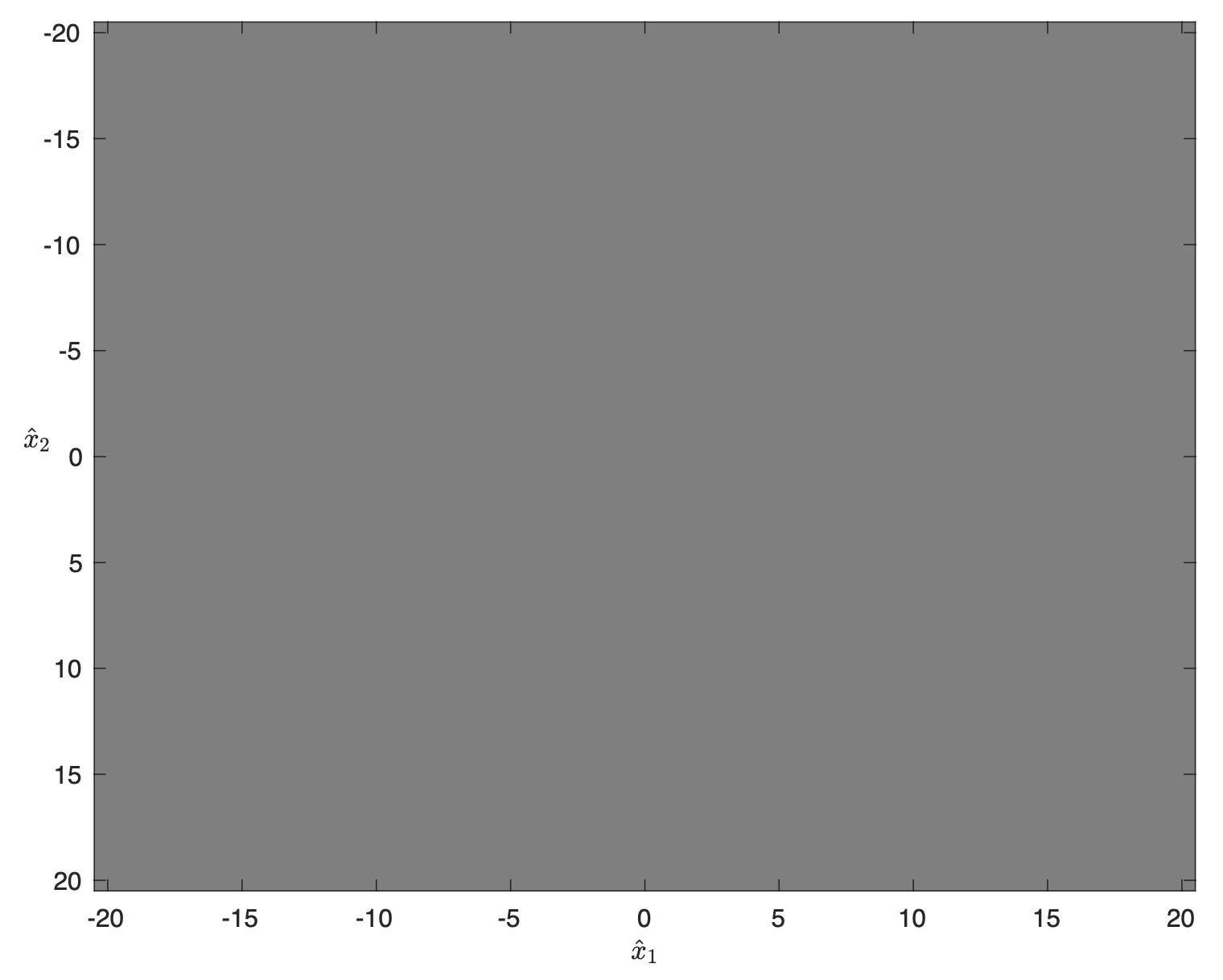}
\end{minipage}
\vspace{-0.17cm}
\caption{Basins of attraction (in light gray) of the global minimum of $f$ in \eqref{eq:f_newton} for the Newton's method (left) and for the Newton-SPQ method (right)}
\label{fig:test}
\end{figure}


\end{document}